\documentclass[11pt,oneside]{amsart}
\usepackage{amsfonts}
\usepackage{amsmath}
\usepackage{amssymb}
\usepackage{mathrsfs}
\usepackage{geometry}
\usepackage{graphicx}
\usepackage{subfigure}
\usepackage{cite}
\usepackage{hyperref}
\usepackage{microtype}
\usepackage{enumerate}
\usepackage{mathrsfs}
\usepackage{tikz}%% drawing
\usepackage{tikz-cd}%% drawing
\usepackage{color}
\usepackage{ragged2e}
 \usepackage{float}
\allowdisplaybreaks[1]

\numberwithin{equation}{section}

\newcommand*{\circled}[1]{\lower.7ex\hbox{\tikz\draw (0pt, 0pt)%
    circle (.5em) node {\makebox[1em][c]{\small #1}};}}

\newcommand{\la}{\lambda}
\newcommand{\al}{\alpha}

\newcommand{\ga}{\gamma}
\newcommand{\Ga}{\Gamma}

\newcommand{\ve}{\varepsilon}

\newcommand{\R}{\mathbb{R}}

\newcommand{\C}{\mathbb{C}}

\newcommand{\Z}{\mathbb{Z}}
\newcommand{\T}{\mathbb{T}}

\newcommand{\ccc}{\cdot\cdot\cdot}

\newcommand{\n}[1]{\Vert #1\Vert }

\newcommand{\bbn}[1]{\Big\Vert #1 \Big \Vert }
\newcommand{\lr}[1]{\left\{ #1\right\} }
\newcommand{\lrc}[1]{\left[ #1\right] }
\newcommand{\lrs}[1]{\left( #1\right) }

\newcommand{\lra}[1]{\langle #1\rangle}

\newcommand{\babs}[1]{\big | #1 \big| }
\newcommand{\bbabs}[1]{\Big | #1 \Big| }
\newcommand{\wt}[1]{\widetilde{#1} }
\newcommand{\wq}{\infty}
\newcommand{\pa}{\partial}

\newcommand{\ol}{\overline}

\begin{document}
\newtheorem{theorem}{Theorem}[section]
\newtheorem{lemma}[theorem]{Lemma}
\theoremstyle{definition}
\newtheorem{definition}[theorem]{Definition}
\newtheorem{example}[theorem]{Example}
\newtheorem{remark}[theorem]{Remark}

\numberwithin{equation}{section}

\newtheorem{proposition}[theorem]{Proposition}
\newtheorem{corollary}[theorem]{Corollary}
\newtheorem{goal}[theorem]{Goal}
\newtheorem{algorithm}{Algorithm}

\renewcommand{\figurename}{Fig.}

\title[Uniqueness for the Quadratic NLS]{Unconditional Uniqueness for the Energy-critical and Energy-supercritical Quadratic NLS}

\author[S. Shen]{Shunlin Shen}
\address{School of Mathematical Sciences, University of Science and Technology of China, Hefei, 230026, China}
\email{slshen@ustc.edu.cn}

\subjclass[2010]{Primary 35Q55, 35A02; Secondary 81V70}

\dedicatory{}

%    Abstract is required.
\begin{abstract}
We study the quadratic nonlinear Schr\"{o}dinger equation in energy-critical and energy-supercritical regimes and establish unconditional uniqueness at critical regularity on both $\mathbb{T}^{d}$ and $\mathbb{R}^{d}$.
We introduce a new infinite quadratic hierarchy that featuring a linear structure for tensor product forms, instead of marginal densities. Consequently, this newly constructed hierarchy differs from the quantum Gross-Pitaevskii hierarchy, and its structure is in fact closer to that of the classical Boltzmann hierarchy.
We prove this quadratic hierarchy admits combinatorial structures that are compatible with the bilinear $U$-$V$
 estimates we prove for the quadratic nonlinearity. These tools enable us to establish unconditional uniqueness at the critical regularity via a quadratic hierarchy approach, and thus to provide an affirmative answer to Bourgain's uniqueness concern \cite[p.152]{Bou93} for the quadratic nonlinearity in the weak-form bilinear case.

 \end{abstract}
\keywords{Unconditional uniqueness, Quadratic hierarchy, Klainerman-Machedon board
game}
\maketitle
\tableofcontents

%%%%无条件唯一性

\section{Introduction}
We consider the quadratic nonlinear Schr\"{o}dinger equation (NLS)
\begin{equation}\label{equ:NLS}
\begin{cases}
&i\pa_{t}u=-\Delta u+ B(u,u),\quad (t,x)\in [0,T_{0}]\times \Lambda^{d}\\
&u(0,x)=u_{0}(x)
\end{cases}
\end{equation}
where $\Lambda^{d}=\R^{d}$ or $\T^{d}$. The bilinear term is
\begin{align}\label{equ:bilinear form}
B(u,u)=a_{1}u^{2}+a_{2}|u|^{2}+a_{3}\ol{u}^{2},
\end{align}
where the coefficient $a_{i}\in \R$ for $i=1,2,3$.

On the Euclidean space, the NLS $(\ref{equ:NLS})$ enjoys the scaling invariance
\begin{align}\label{equ:nls, scaling}
u_{\la}(t,x)=\la^{2}u(\la^{2}t,\la x),\quad \la>0.
\end{align}
which preserves the homogeneous Sobolev norm $\n{u_{0}}_{\dot{H}^{s_{c}}}$ where the critical scaling exponent is given by
\begin{align}\label{equ:scaling exponent}
s_{c}:=\frac{d-4}{2}.
\end{align}
 Accordingly, the initial value problem $(\ref{equ:NLS})$ for $u_{0}\in H^{s_{c}}$ can be classified as energy subcritical, critical or
supercritical depending on whether the critical Sobolev exponent $s_{c}$ is less than, equal to, or greater than the energy exponent $s=1$.

The well-posedness theory for the NLS \eqref{equ:NLS} has been widely studied. See, for example, \cite{Bou93,Caz03,KK26,KPV96,Sta97}.
The goal of this paper is to address the uniqueness problem, especially in the energy-critical and energy-supercritical regimes where
$s_{c}\geq 1$, which corresponds to
$d\geq 6$. Our main result establishes unconditional uniqueness at the critical regularity level.
\begin{theorem}\label{thm:uniqueness for nls}
Let $s_{c}\geq 1$. There is at most one $C([0,T_{0}];H^{s_{c}}(\Lambda^{d}))$ solution to $(\ref{equ:NLS})$.
\end{theorem}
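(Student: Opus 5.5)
We follow the strategy indicated in the abstract: lift the NLS to an infinite \emph{quadratic hierarchy} acting on tensor products, and prove uniqueness for that hierarchy through a Klainerman--Machedon type combinatorial reduction combined with bilinear $U$-$V$ estimates. Since $B(u,u)$ contains $u$ and $\ol{u}$, we first write the equation as a system for the pair $(u,\ol{u})$, or equivalently work with signed copies $u^{\pm}$.

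\textbf{Step 1: the hierarchy.} Given a solution $u\in C([0,T_0];H^{s_c})$, set $\ga^{(k)}(t)=\bigotimes_{j=1}^{k}u^{\pm}(t,x_j)$, with a sign pattern attached to each factor. Differentiating $\ga^{(k)}$ gives
\begin{align*}
i\pa_t\ga^{(k)}=\sum_{j=1}^k\big(\mp\Delta_{x_j}\big)\ga^{(k)}+\sum_{j=1}^k B_{j}\ga^{(k+1)},
\end{align*}
where $B_j$ collapses the variables $x_j$ and $x_{k+1}$ onto the diagonal and applies the coefficients $a_i$. This is linear in the sequence $(\ga^{(k)})$, and $\ga^{(k+1)}$ is coupled to $\ga^{(k)}$ only through the quadratic collision $B_j$. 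The structure is Boltzmann-like rather than Gross--Pitaevskii-like: there are no commutators, only one-sided collisions. Let $u,v$ be two solutions with the same data. The difference $\ga^{(k)}=u^{\otimes k}-v^{\otimes k}$ solves the same hierarchy with zero data. Iterating Duhamel $n$ times gives
\begin{align*}
\ga^{(1)}(t_1)=\int_{0\le t_{n+1}\le\dots\le t_1}U(t_1-t_2)B\,U(t_2-t_3)B\cdots B\,\ga^{(n+1)}(t_{n+1})\,dt,
\end{align*}
which contains $n!$ Duhamel terms. We must show this tends to $0$.

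\textbf{Step 2: combinatorics.} We need a board-game reduction adapted to this hierarchy. Using the commutation structure of the $B_j$, the $n!$ terms should group into at most $C^n$ equivalence classes. Each class can be written as an integral over a time region with a tree structure, in which every collision node has exactly two children (binary trees, counted by Catalan numbers). Equivalently, the iterated Duhamel expansion is reorganized along the binary trees generated by Picard iteration of $u-v$. We expect this to be cleaner than the Gross--Pitaevskii case precisely because the hierarchy is linear in tensor products.

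\textbf{Step 3: estimates at critical regularity.} Here we use the bilinear $U$-$V$ estimate
\begin{align*}
\n{P_{N}B(P_{N_1}w_1,P_{N_2}w_2)}_{DU^2}\lesssim \Big(\tfrac{\min(N_1,N_2)}{\max(N_1,N_2)}+\tfrac{\min(N_1,N_2)}{N}\Big)^{\delta}\n{P_{N_1}w_1}_{V^2\dot H^{s_c}}\n{P_{N_2}w_2}_{V^2\dot H^{s_c}},
\end{align*}
which on $\T^d$ carries the usual time-localization loss. We apply it node by node from the leaves to the root of each tree. The leaves are either $u$ or $v$, or $u-v$ at the deepest level. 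The structure of the tree guarantees that every intermediate function sits in $V^2$ or $U^2$ at regularity $s_c$, because Duhamel maps $DU^2$ into $U^2\subset V^2$. The top-level factors are $u,v\in C_tH^{s_c}$, which need not lie in $V^2$. To handle them we split each into a smooth low-frequency part and a small high-frequency part: by uniform continuity in time, $\sup_t\n{P_{>M}u(t)}_{H^{s_c}}<\ve$ for $M$ large, while $P_{\le M}u$ is controlled in $V^2$ on short intervals via the equation. Summing over at most $C^n$ trees then gives $\n{\ga^{(1)}}\lesssim (C\ve)^n$ plus a contribution from low-frequency leaves with time factors $T^{n\theta}M^{Cn}$, which is small on a short interval.

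\textbf{Main obstacle.} The hard step is making the frequency localization compatible with the tree combinatorics. One must show that the low-frequency parts gain powers of $T$ uniformly across trees, and that the high-frequency smallness does not sum divergently. The critical $\ell^2$ dyadic summation must close using the off-diagonal gain $\delta$ in the bilinear estimate, and doing this at the endpoint $s_c$ on $\T^d$ is the most delicate point. Once $\ga^{(1)}=0$ on a short interval whose length depends only on the modulus of continuity of $u$ and $v$, we iterate in time to cover $[0,T_0]$.
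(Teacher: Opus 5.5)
Your overall architecture matches the paper's: a tensor-product hierarchy with zero data, a board-game reduction to $C^n$ binary-tree classes, bilinear $U$-$V$ estimates, frequency localization with short time, and continuation. But Step 3 has a genuine gap, and it sits exactly where unconditionality is decided.

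\textbf{The gap.} You treat the leaves as the functions $u,v\in C_tH^{s_c}$ and split $u=P_{\le M}u+P_{>M}u$, placing $P_{\le M}u$ in $V^2$ via the equation. This does nothing for $P_{>M}u$. That piece is small only in $L^\infty_tH^{s_c}$ and is not known to lie in $V^2H^{s_c}$, while your bilinear estimate needs $V^2H^{s_c}$ inputs. Rewriting $P_{>M}u$ through its own Duhamel formula just reintroduces $u$ at interior times. That regress (equivalently, the need for $L^1_tH^{s_c}$ bilinear bounds) is precisely what the hierarchy is supposed to avoid.

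\textbf{The missing idea: the shape of the time domain.} After the board game, each class is integrated over $T_D(\mu)=\{t_i\ge t_j:\ j \text{ a child of } i\}$, and this coincides with the domain $T_C(\mu)$ read off from the Duhamel tree. So the leaf time $t_{n+1}$ can be pulled to the outermost integral. The rest becomes nested integrals $\int_a^{t_i}e^{i(t_i-t_j)\Delta}B(\cdot,\cdot)\,dt_j$ with $a\in\{0,t_{n+1}\}$. For fixed $t_{n+1}$, every leaf is then the free evolution $e^{i(t-t_{n+1})\Delta}\phi(t_{n+1})$ of a fixed datum. Hence $\n{e^{i(t-t_{n+1})\Delta}P_{>M}\phi(t_{n+1})}_{X^{s_c}}\le \n{P_{>M}\phi(t_{n+1})}_{H^{s_c}}\le\ve$ by uniform-in-time frequency localization. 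The low part gains $T^{1/6}M^{1/3}$ from H\"{o}lder in time and Bernstein inside the bilinear estimate, not from the equation. No $V^2$ information on $u$ or $v$ is ever used. This is also why your ``equivalently, Picard iteration of $u-v$'' is not right. Off the path to the last collision the lower limit is $0$, so leaves are propagated freely from $t_{n+1}$ back to times $t_j<t_{n+1}$. The identity holds only because of the hierarchy's linearity and the board-game rearrangement.

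\textbf{A second consequence your scheme does not accommodate.} After the rearrangement, the deepest collision $B(\phi(t_{n+1}),\phi(t_{n+1}))$ carries no time integration. It therefore lies only in $H^{s_c-2}$ (by Sobolev), not in $H^{s_c}$. The paper handles this as follows:
\begin{enumerate}
\item It proves the bilinear estimates for all $s\in[s_c-2,s_c]$, with the second factor always at $s_c$.
\item It runs the nodes on the path from that collision to the root at $s=s_c-2$, and all other nodes at $s=s_c$.
\item It shows $\Phi^{(1)}\to 0$ in $L^\infty_TH^{s_c-2}$.
\end{enumerate}
With every factor in $V^2\dot H^{s_c}$, as you propose, the estimate cannot close at that node.

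Finally, you do not need a $u-v$ leaf. One integrates against the total variation of $\delta_{u(t_{n+1})}-\delta_{v(t_{n+1})}$. The decay comes from at least about $n/2$ nodes having a free leaf as a child, each contributing the small factor $T^{1/6}M^{1/3}C_0+\ve$.
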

\begin{remark}
In Bourgain's seminal paper \cite{Bou93}, the well-posedness theory for the general power-type nonlinearity $|u|^{\alpha}u$ with $\alpha \geq 1$ on $\T^{d}$ is investigated. When $\alpha = 1$, this nonlinearity reduces to the quadratic case $|u|u$.
Regarding uniqueness for $\al\in[1,2)$, Bourgain remarks in \cite[p.152, Remarks (ii)]{Bou93},
\begin{align*}
\text{``We loose uniqueness here however, which is a major consideration in these problems.''}
 \end{align*}
 While Bourgain originally treated the nonlinearity $|u|^{\al}u$, the problem for the general bilinear form $B(u,u)$ remains nontrivial. On one hand, it shares the essential difficulty arising from scaling criticality. On the other hand, the $L_{t}^{1}H_{x}^{s_{c}}$ Sobolev multilinear estimates on $\T^{d}$ for the well posedness theory are actually more difficult to establish than those for higher power nonlinearities like the cubic and quintic cases, due to the fact that the extra factors present in a higher power nonlinearity allow for more freedom in the analysis.

 Here,
Theorem \ref{thm:uniqueness for nls} establishes a positive unconditional uniqueness result for the bilinear form
$B(u,u)$ in both energy-critical and energy-supercritical regimes. This theorem thus provides an affirmative answer to Bourgain's uniqueness concern within the weak-form bilinear case.
\end{remark}

The fundamental concept of unconditional uniqueness was first introduced by Kato in \cite{Kat95}.
 In the study of dispersive equations, the well-posedness theory is typically established within specific auxiliary function spaces, such as Strichartz spaces. Different choices of these auxiliary spaces may lead to distinct solutions, so that the uniqueness established is inherently construction-dependent.
 Unconditional uniqueness, by contrast, requires that for a given initial datum, the solution within its natural space is unique without relying on any auxiliary spaces. Currently, this topic has become a very active area of research. See, for example, \cite{BIT11,FPT03,GST13,Kis21unc,KOY20,MN09,MPV18,Zho97} along with the references therein for further developments regarding other dispersive equations.

On the Euclidean space $\R^{d}$, unconditional uniqueness is generally established by demonstrating that any solution must coincide with the Strichartz solution whenever the latter exists. This strategy relies heavily on inhomogeneous Strichartz estimates and has proven successful in many challenging settings such as energy-critical cases \cite{Caz03,CKSTT08,TV05}.

However, such an argument for the Euclidean setting is no longer effective when \eqref{equ:NLS} is posed on $\T^{d}$, owing to the significantly weaker dispersion in the periodic setting. Moreover, the well-posedness framework on $\T^{d}$ is substantially more intricate, frequently relying on sophisticated tools such as the
$X_{s,b}$ spaces introduced in \cite{Bou93} or the atomic $U^{p}$ and $V^{p}$ spaces \cite{HTT11,IP12}.
Consequently, addressing unconditional uniqueness problems at critical regularity on $\T^{d}$ presents a more difficult challenge and requires a new idea.

\vspace{1em}
\noindent\textbf{Novelties and Outline of the Proof}.
A key novelty of this paper is the discovery of an infinite quadratic hierarchy,
 a construction motivated by the classical Boltzmann hierarchy rather than the derivation of the quantum Gross-Pitaevskii hierarchy for the cubic or quintic nonlinearity. Crucially, we find that its inherent structure is fully compatible with bilinear $U$-$V$ estimates we establish for the quadratic nonlinearity.
This allows us to establish unconditional uniqueness at the critical regularity via a quadratic hierarchy approach.

First, we introduce a novel idea by exploiting a hierarchy structure of product form.
Specifically,
let $\phi_{1}$ and $\phi_{2}$ are two solutions to \eqref{equ:NLS} with the same initial datum.
We define the key product quantity
\begin{align}\label{equ:product quantity,intro}
\Phi^{(k)}(t):=\prod_{j=1}^{k}\phi_{1}(t,x_{j})-\prod_{j=1}^{k}\phi_{2}(t,x_{j}).
\end{align}
The advantage of this product structure is that it gives rise to an infinite quadratic hierarchy
\begin{align}\label{equ:hierarchy,quadratic,intro}
i\pa_{t}\Phi^{(k)}=\sum_{j=1}^{k}-\Delta_{x_{j}}\Phi^{(k)}+B^{(k+1)}\Phi^{(k+1)}.
\end{align}
See Section \ref{sec:Uniqueness of the Infinite Boltzmann Hierarchy} for further details.

Although quadratic NLS originates from quantum systems, the quadratic hierarchy \eqref{equ:hierarchy,quadratic,intro} obtained here is totally different from the quantum GP hierarchy, which reads
\begin{align}\label{equ:GP,hierarchy,intro}
i\pa_{t}\ga^{(k)}=\sum_{j=1}^{k}[-\Delta_{x_{j}},\ga^{(k)}]\pm \sum_{j=1}^{k}\operatorname{Tr}_{k+1}[\delta(x_{j}-x_{k+1}),\ga^{(k+1)}].
\end{align}
The GP hierarchy enjoys clear physical interpretations and is crucial in the study of Bose-Einstein condensation, see for example \cite{ESY06,ESY07,ESY09,ESY10}. However, it is inherently restricted to structures involving specific odd powers, such as cubic and quintic nonlinearities. In contrast, the quadratic hierarchy \eqref{equ:hierarchy,quadratic,intro} is structurally much closer to the classical Boltzmann hierarchy. In fact, the product formulation is directly inspired by the product structure in the Boltzmann hierarchy as analyzed in \cite{CSZ26}, which takes the form
\begin{align}\label{equ:Boltzmann,product}
f^{(k)}(t):=\prod_{j=1}^{k}f_{1}(t,x_{j},v_{j})-\prod_{j=1}^{k}f_{2}(t,x_{j},v_{j}).
\end{align}

With the newly constructed quadratic hierarchy \eqref{equ:hierarchy,quadratic,intro}, we have in fact taken a substantial step toward resolving the uniqueness problem. Different from the route of direct NLS analysis,
 we proceed with a hierarchy strategy by exploring the combinatorial structure inherent in this quadratic hierarchy \eqref{equ:hierarchy,quadratic,intro} and developing iterative estimates that are fully compatible with the combinatorial framework. Actually, the hierarchy strategy has already proved to be remarkably successful in the analysis of the GP hierarchy \eqref{equ:GP,hierarchy,intro}.

The GP hierarchy scheme originates from the uniqueness analysis of the GP hierarchy \eqref{equ:GP,hierarchy,intro} and has a physical origin in the derivation of the NLS as the mean-field limit of quantum $N$-body dynamics.
Around 2005, it was Erd\H{o}s, Schlein, and Yau who first rigorously derived the 3D cubic defocusing NLS in their fundamental papers \cite{ESY06,ESY07,ESY09,ESY10}. A key ingredient of their proof was an
 $H^{1}$-type unconditional uniqueness theorem for the GP hierarchy, established in \cite{ESY07} through a sophisticated Feynman graph analysis. The first series of ground breaking papers have motivated a large amount of work.

Subsequently in 2007,
Klainerman and Machedon \cite{KM08}, inspired by \cite{ESY07,KM93},
proved a uniqueness theorem in a Strichartz-type space under an additional a priori space-time norm condition. They provided
space-time collapsing estimates and a different combinatorial argument, the now so-called Klainerman-Machedon (KM) board game argument. Such an argument has motivated many work such as \cite{CP11,CP14der,CH16,CH16cor,CH22,CS14,GSS14,HS16,KSS11,She21,Soh15} for the uniqueness of the GP hierarchy and derivation of the NLS. Furthermore, despite having been originally tailored to NLS, it has later offered essential insights into the Boltzmann hierarchy, as demonstrated for instance in \cite{AMPT25,CDP19,CH23der,CSZ26}.

In 2013, T. Chen, Hainzl, Pavlovi{\'c}, and Seiringer \cite{CHPS15}, by introducing the quantum de Finetti theorem from \cite{LNR14} into the KM board game argument, simplified the proof of the $\R^{3}$ uncondition uniqueness theorem given in \cite{ESY07}.
Their method, integrating the KM board game argument, the quantum de Finetti theorem, and multilinear Sobolev estimates, is robust in handling unconditional uniqueness problems of GP hierarchy. See for example \cite{HTX15,HTX16}.

Somewhat unexpectedly, the uniqueness analysis of GP hierarchy, which appears more difficult than that for the NLS, began to yield novel NLS results, starting with the work of Herr and Sohinger \cite{HS19} and of X. Chen and Holmer \cite{CH19}.
Specifically, Herr and Sohinger, first discovered that the GP hierarchy method could establish new unconditional uniqueness results for the cubic NLS on $\T^{d}$, covering the full scaling-subcritical regime for $d\geq 4$. (See also \cite{Kis21} using NLS analysis in the scaling-subcritical regime.) X. Chen and Holmer, by discovering a new hierarchical uniform frequency localisation property for
the GP hierarchy, established a new uniqueness theorem for the $\T^{3}$ quintic GP hierarchy, which implies the
unconditional uniqueness result for the $\T^{3}$ quintic energy-critical NLS. In subsequent work \cite{CH22unc,CSZ22}, new combinatorics were worked out to enable the application of
$U$-$V$ multilinear estimates, which serves as a key role in completely resolving the unconditional uniqueness problems for cubic and quintic energy-critical and energy-supercritical NLS on $\R^{d}$ and $\T^{d}$.

Through a great deal of efforts of prior work, the GP hierarchy scheme within the quantum framework has emerged as a robust approach.
At present, the hierarchy scheme appears to be the only known approach to deal with the uniqueness problem on $\T^{d}$ at the critical regularity. In light of this, for the quadratic hierarchy, it is natural for us to develop this hierarchy approach, with the aim of resolving the corresponding uniqueness problem at the critical regularity.

 However, as mentioned before, the quadratic hierarchy is different from the GP hierarchy, but is closer in nature to the Boltzmann hierarchy. Hence, the combinatorial structure underlying this hierarchy needs to be reconstructed from scratch. On the other hand, although the quadratic nonlinearity looks formally simple, establishing the corresponding $L_{t}^{1}H_{x}^{s_{c}}$ Sobolev bilinear estimates
is extremely hard, and these estimates might not be true. A key idea developed in \cite{CH22unc,CSZ22} is to introduce weaker
$U$-$V$ bilinear estimates to replace the Sobolev bilinear estimates.

Based on these considerations, our proof is divided into three steps. The first step is to construct the combinatorial framework for the quadratic hierarchy and prove that it is compatible with the
$U$-$V$ estimates. The second step is to derive the iterative estimates for the hierarchy on compatible time integration domain, relying on the
$U$-$V$ bilinear estimates. The third step is to provide a complete proof of these
$U$-$V$ bilinear estimates.

The paper is organized accordingly. In Section \ref{sec:Uniqueness of the Infinite Boltzmann Hierarchy}, we introduce the quadratic hierarchy based on the product form. By applying the KM board game argument to this hierarchy, we obtain a preliminary combinatorial structure, expressed as
\begin{align}\label{equ:km,f,I,intro}
\Phi^{(1)}(t_{1})=\sum_{\mu\in \mathrm{C}^{(k)}_{\mathrm{up}} }\int_{\operatorname{Dom}(\mu)}J_{\mu}^{(k+1)}(\Phi^{(k+1)})(t_{1},\underline{t}_{k+1})
d\underline{t}_{k+1},
\end{align}
where $\mu$ is a collapsing map given by \eqref{equ:collapsing map,def}, and $\mathrm{C}^{(k)}_{\mathrm{up}}$ denotes the the class of the upper echelon form.
Nevertheless, this representation is insufficient for our purposes. Indeed, the time integration
domain $\operatorname{Dom}(\mu)$, which consists of a union of a very large
number of simplexes in high dimension, is obviously complicated for large $k$.
 In order to obtain an explicit computable expression, we introduce in Section \ref{sec:Admissible Tree} the combinatorial tree diagrams developed in \cite{CH22unc,CSZ22,CSZ26}.
 For example, as illustrated in Example \ref{example:admissible tree}, the collapsing map $\mu$, which is given by
$$
\begin{tabular}{c|ccccc}
$j$&2&3&4&5&6\\
\hline
$\mu(j)$ &1&1&1&2&5
\end{tabular}
$$
corresponds to the following admissible tree.
\begin{center}
\begin{tikzpicture}
\node (1) at (-0.8,0) {1};
\node (2) at (0,-0.8) {2};
\node (3) at (-0.8,-1.6) {3};
\node (4) at (-1.6,-2.4) {4};
\node (5) at (0.8,-1.6) {5};
\node (6) at (1.6,-2.4) {6};
\draw[<-] (1)--(2);
\draw[-] (2)--(3);
\draw[<-] (2)--(5);
\draw[<-] (5)--(6);
\draw[-] (3)--(4);
\end{tikzpicture}
\end{center}
 Then in Section \ref{sec:An Extended KM Board Game for the Quadratic Hierarchy}, we formulate an extended KM board game for the quadratic hierarchy, which allows us to represent the time integration domains precisely via tree diagrams, yielding
\begin{align}\label{equ:integration,upper echelon form, time integration domain,intro}
\Phi^{(1)}(t_{1})=\sum_{\mu\in \mathrm{C}^{(k)}_{\mathrm{up}} }\int_{T_{D}(\mu)}J_{\mu}^{(k+1)}(\Phi^{(k+1)})(t_{1},\underline{t}_{k+1})
d\underline{t}_{k+1},
\end{align}
where $T_{D}(\mu)$ is specified by an admissible tree diagram. It should be noted that this strategy for explicitly computing the time integration domain was first introduced in \cite{CH22unc} and is essential for proving the compatibility of the
$U$-$V$ estimates.
Although this quadratic hierarchy differs from the GP hierarchy, we find that it exhibits a compatible combinatorial structure as well, which we prove in Section \ref{sec:Compatible Time Integration Domain}.

In Section \ref{sec:Duhamel Expansion and Duhamel Tree}, we present an algorithmic construction that yields a Duhamel tree representation of the Duhamel expansion $J_{\mu}^{(k+1)}$. Continuing with the collapsing map $\mu$ from Example \ref{example:admissible tree}, the corresponding Duhamel tree takes the following form
\begin{center}
\begin{tikzpicture}
\node{$D^{(1)}$}[sibling distance=80pt,level distance=1cm]
child{node{$D^{(2)}$}
child{node{$D^{(3)}$}[sibling distance=40pt,level distance=1cm]
child{node{$D^{(4)}$}
child{node{$\phi$}}
child{node{$\phi$}}}
child{node{$\phi$}}}
child{node{$D^{(5)}$}[sibling distance=40pt,level distance=1cm]
child{node{$\phi$}}
child{node{$D^{(6)}$}[sibling distance=40pt,level distance=1cm]
child{node{$\phi$}}
child{node{$\phi$}}
}}
}
;
\end{tikzpicture}
\end{center}

In Section \ref{subsection:Compatible Time Integration Domain}, we then introduce the compatible time integration domain $T_{C}(\mu)$, which is defined in terms of a Duhamel tree.
A key observation is that the Duhamel tree, after ignoring the offspring labeled $\phi$, shares the same combinatorial structure as the admissible tree. Consequently, the compatible time integration domain
$T_{C}(\mu)$ coincides exactly with $T_{D}(\mu)$.
  Building on this equivalence, we subsequently prove that the time integration domains are compatible with the space-time integration structure required by the $U$-$V$ estimates.

With this framework in place, we proceed in Section \ref{sec:Iterative Estimates} to derive the iterative estimates. The proof relies on the following high-low frequency $U$-$V$ bilinear estimates
\begin{align}
\bbn{\int_{t_{0}}^{t}e^{i(t-\tau)\Delta}(\wt{u}_{1}\wt{u}_{2})d\tau}_{X^{s}}\lesssim& \n{u_{1}}_{X^{s}}
\n{u_{2}}_{X^{\frac{d-4}{2}}}, \label{equ:bilinear estimate,high,dual argument,intro}\\
\bbn{\int_{t_{0}}^{t}e^{i(t-\tau)\Delta}(\wt{u}_{1}\wt{u}_{2})d\tau}_{X^{s}}
\lesssim& \n{u_{1}}_{X^{s}}
\lrs{T^{\frac{1}{6}}M_{0}^{\frac{1}{3}}\n{P_{\leq M_{0}}u_{2}}_{X^{\frac{d-4}{2}}}+\n{P_{>M_{0}}u_{2}}_{X^{\frac{d-4}{2}}}}. \label{equ:bilinear estimate,low,dual argument,intro}
\end{align}
where $s\in \lr{s_{c}-2,s_{c}}$.
As a preliminary step, we mark the
Duhamel tree appropriately. Then, based on this marking, we apply the above bilinear estimates at each relevant node to complete the iterative argument.

In Section \ref{sec:Bilinear Estimates}, we
give a short introduction to $U$-$V$ spaces, following the now-standard reference \cite{KTV14}, and then
complete the proof of the $U$-$V$ bilinear estimates \eqref{equ:bilinear estimate,high,dual argument,intro}--\eqref{equ:bilinear estimate,low,dual argument,intro}
via scaling-invarint Strichartz estimates based on the $l^{2}$-decoupling theorem \cite{BD15}.

 The proof of the main theorem is finally presented in Section \ref{sec:Proof of the Main Theorem}.
Additionally, we include in Appendix \ref{sec:Uniform in Time Frequency Localization for Quadratic NLS} the proof of the uniform-in-time frequency localization property for the quadratic NLS.

\section{An Infinite Quadratic Hierarchy}\label{sec:Uniqueness of the Infinite Boltzmann Hierarchy}
We start from the quadratic NLS
\begin{align}\label{equ:quadratic NLS,hierarchy}
i\pa_{t}\phi=-\Delta \phi+B(\phi,\phi).
\end{align}
We then introce an infinite quadratic hierarchy, which takes the form
\begin{align}\label{equ:quadratic hierarchy}
i\pa_{t}\Phi^{(k)}=\sum_{j=1}^{k}-\Delta_{x_{j}}\Phi^{(k)}+B^{(k+1)}\Phi^{(k+1)},
\end{align}
where the collision operator is given by
\begin{align}
B^{(k+1)}=:\sum_{j=1}^{k}B_{j,k+1}^{(k+1)}
\end{align}
 with its action on tensor products defined by
\begin{align}
B_{j,k+1}^{(k+1)}\lrs{\prod_{i=1}^{k+1}\phi_{i}(x_{i})}=\lrs{\prod_{i=1}^{j-1}\phi_{i}(x_{i})} B(\phi_{j},\phi_{k+1})(x_{j})\lrs{\prod_{i=j+1}^{k}\phi_{i}(x_{i})}.
\end{align}
One special solution to this quadratic hierarchy \eqref{equ:quadratic hierarchy} is given by the tensor product form $\phi^{\otimes k}$, where $\phi$ is the solution to \eqref{equ:quadratic NLS,hierarchy}.
Here, we do not need a well-posedness theory for the general initial data of this hierarchy. In fact, our uniqueness analysis of the hierarchy only requires restriction to the linear space generated by such product forms.

The advantage of introducing this hierarchy lies in its linearity. However, the main difficulty stems from the fact that it is a system of infinitely many coupled equations over an unbounded number of variables. To proceed,
We rewrite $\Phi^{(k)}$ in Duhamel form
\begin{align}\label{equ:duhamel,quadratic hierarchy}
\Phi^{(k)}(t)=U^{(k)}(t)\Phi^{(k)}(0)-i\int_{0}^{t}U^{(k)}(t-s)B^{(k+1)}\Phi^{(k+1)}(s)ds
\end{align}
with $U^{(k)}(t)=\prod_{j=1}^{k}e^{it\Delta_{x_{j}}}$.
Let $\phi_{1}$ and $\phi_{2}$ be two $C([0,T];H^{s_{c}})$ solutions to \eqref{equ:quadratic NLS,hierarchy} with the same initial datum.
Then we set
\begin{align*}
 \Gamma_{1}=&\lr{\phi_{1}^{\otimes k}}_{k=1}^{\infty}, \quad
 \Gamma_{2}=\lr{\phi_{2}^{\otimes k}}_{k=1}^{\infty}, \\
\Gamma=&\lr{\phi_{1}^{\otimes k}-\phi_{2}^{\otimes k}}_{k=1}^{\infty}=\lr{\Phi^{(k)}(t)=\int \phi^{\otimes k}d\nu_{t}(\phi)},
\end{align*}
where $\nu_{t}(\phi)=\delta_{\phi_{1}(t)}(\phi)-\delta_{\phi_{2}(t)}(\phi)$.

To conclude uniqueness, it suffices to prove $\Ga=0$.
Given the linearity of the infinite quadratic hierarchy \eqref{equ:duhamel,quadratic hierarchy}, $\Gamma$ is hence a solution to the
 hierarchy with zero initial datum. Therefore, iterating \eqref{equ:duhamel,quadratic hierarchy}
$k$ times allows us to write\footnote{Here, the constant $(-i)^{k}$ is omitted, as it is not important.}
\begin{align*}
\Phi^{(1)}(t_{1})=\int_{0}^{t_{1}}\int_{0}^{t_{2}}\ccc \int_{0}^{t_{k}}
J^{(k+1)}(\Phi^{(k+1)}(t_{k+1}))d\underline{t}_{k+1}
\end{align*}
where $\underline{t}_{k+1}=(t_{2},t_{3},...,t_{k+1})$ and
\begin{align*}
J^{(k+1)}(\Phi^{(k+1)})(t_{1},\underline{t}_{k+1})=U^{(1)}_{1,2}B^{(2)}U_{2,3}^{(2)}B^{(3)}\ccc
U_{k,k+1}^{(k)}B^{(k+1)}\Phi^{(k+1)},
\end{align*}
with the shorthand notation $U_{j,j+1}^{(j)}:=U^{(j)}(t_{j}-t_{j+1})$.

Since $B^{(k+1)}$ contains $k$ terms, there are $k!$ summands inside $\Phi^{(1)}(t_{1})$. More specifically, we have
\begin{align*}
\Phi^{(1)}(t_{1})=\sum_{\mu}\int_{t_{1}\geq t_{2}\geq \ccc\geq t_{k+1}}J_{\mu}^{(k+1)}(\Phi^{(k+1)})(t_{1},\underline{t}_{k+1})
d\underline{t}_{k+1},
\end{align*}
where
\begin{align}\label{equ:J,k,formula}
J_{\mu}^{(k+1)}(\Phi^{(k+1)})(t_{1},\underline{t}_{k+1})=U_{1,2}^{(1)}B_{\mu(2),2}^{(2)}U_{2,3}^{(2)}B_{\mu(3),3}^{(3)}\ccc
U_{k,k+1}^{(k)}B^{(k+1)}_{\mu(k+1),k+1}\Phi^{(k+1)}.
\end{align}
Here, $\lr{\mu}$ is a set of maps from $\lr{2,3,..,k+1}$ to $\lr{1,2,...,k}$ satisfying
that
\begin{align}\label{equ:collapsing map,def}
\text{$\mu(2)=1$ and $\mu(j)<j$ for all $j$},
\end{align}
We call it a collapsing map.

\subsection{Admissible Trees}\label{sec:Admissible Tree}
We begin with a brief review of the KM board game argument from \cite{KM08}. While initially developed for the GP hierarchy, this framework is robust and applicable to a much wider range of problems.
At the very least, it provides an effective preliminary combinatorial analysis for this quadratic hierarchy.
In short, one could sort $k!$ summands into a sum of upper echelon forms.

\begin{definition}[Upper echelon form]
Let $\mu$ be a collapsing map given by \eqref{equ:collapsing map,def}. If $\mu$ satisfies that
\begin{align}
\text{$\mu(j)\leq \mu(j+1)$ for $1\leq j\leq k-1$,}
\end{align}
 then it is in upper echelon form as they are called in \cite{KM08}.
For notional convenience, we denote
$\mathrm{C}^{(k)}_{\mathrm{up}}$ by the class of the upper echelon form.
\end{definition}

\begin{definition}[KM acceptable move and equivalent classes]
 Let $\mu$ be a collapsing map and $\sigma$ a permutation of $\{2, \ldots, k+1\}$. A Klainerman-Machedon acceptable move of $\mu$, denoted by $\operatorname{KM}(j, j+1)$, is allowed when
 $$\mu(j) \neq \mu(j+1),\quad \mu(j+1)<j.$$
The action of this move on the pair $(\mu,\sigma)$ is given by
\begin{equation}
\left\{
\begin{aligned}
\left(\mu^{\prime}, \sigma^{\prime}\right)=&\operatorname{KM}(j, j+1)(\mu, \sigma),\\
\mu^{\prime}  =&(j, j+1) \circ \mu \circ(j, j+1), \\
\sigma^{\prime}  =&(j, j+1) \circ \sigma.
\end{aligned}
\right.
\end{equation}
More generally, we say that a permutation $\rho$ is a Klainerman-Machedon acceptable move of $\mu$, if
 it admits a decomposition of the form
$$\rho=\rho_{r}\circ \rho_{r-1}\circ \ccc \circ \rho_{1}$$
 where $\rho_{1}$ is an acceptable move of $\mu$ and $\rho_{i}=(j_{i},j_{i}+1)$ is an acceptable move of $$(\mu_{i},\sigma_{i})=\mathrm{KM}(\rho_{i-1})\circ \ccc \circ \mathrm{KM}(\rho_{1})(\mu,\sigma)$$
 for $2\leq i\leq r$. The action of such a composite move is then defined by
\begin{equation}
\left\{
\begin{aligned}
(\mu',\sigma')=&\mathrm{KM}(\rho)(\mu,\sigma),\\
\mu'=&\rho \circ \mu \circ \rho^{-1},\\
\sigma'=&\rho \circ \sigma.
\end{aligned}
\right.
\end{equation}
If $\mu$ and $\mu'$ are such that there exists $\rho$ as above for which $(\mu',\sigma')=\mathrm{KM}(\rho)(\mu,\sigma)$, we say that $\mu'$ and $\mu$ are
KM-relatable, we write it as $\mu\sim \mu'$ for short. This is an equivalence relation that partitions the set
of collapsing maps into equivalence classes.
\end{definition}

Let us define
\begin{equation}
I(\mu,\sigma,\Phi^{(k+1)})
=\int_{t_{1}\geq t_{\sigma(2)}\geq \ccc \geq t_{\sigma(k+1)}}
J_{\mu}^{(k+1)}(\Phi^{(k+1)})(t_{1},\underline{t}_{k+1})d\underline{t}_{k+1}.
\end{equation}
By adapting the proof of the KM board game argument to the quadratic hierarchy, one could also have
\begin{align}\label{equ:the equality under the km move}
I(\mu,\sigma,\Phi^{(k+1)})=I(\mu',\sigma',\Phi^{(k+1)}).
\end{align}
Based on this equality \eqref{equ:the equality under the km move}, we provide the preliminary combinatorial analysis for this quadratic hierarchy in the following lemma.
\begin{lemma}[{\hspace{-0.01em}\cite{KM08}}]\label{lemma:KM board game}
For the infinite quadratic hierarchy, one also has
\begin{align}\label{equ:km,f,I}
\Phi^{(1)}(t_{1})=\sum_{\mu\in \mathrm{C}^{(k)}_{\mathrm{up}} }\int_{\operatorname{Dom}(\mu)}J_{\mu}^{(k+1)}(\Phi^{(k+1)})(t_{1},\underline{t}_{k+1})
d\underline{t}_{k+1},
\end{align}
where there are at most $4^{k}$ terms inside the the class $\mathrm{C}^{(k)}_{\mathrm{up}}$ of the upper echelon form. Here,
 the time integration domain $\operatorname{Dom}(\mu)$ is a subset of $[0,t_{1}]^{k}$, depending on $\mu$.
\end{lemma}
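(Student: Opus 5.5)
The plan is to follow the Klainerman--Machedon board game in three steps. First verify the invariance \eqref{equ:the equality under the km move} for the quadratic hierarchy. Then show that every collapsing map can be brought to upper echelon form by acceptable moves. Finally, count the upper echelon forms.

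\emph{Step 1: invariance under a single move.} Suppose $\operatorname{KM}(j,j+1)$ is acceptable for $\mu$, so $\mu(j)\neq\mu(j+1)$ and $\mu(j+1)<j$. In $J_{\mu}^{(k+1)}$, the relevant portion is
\[
U^{(j-1)}_{j-1,j}B^{(j)}_{\mu(j),j}U^{(j)}_{j,j+1}B^{(j+1)}_{\mu(j+1),j+1}U^{(j+1)}_{j+1,j+2}.
\]
Since $\mu(j+1)<j$, the operator $B_{\mu(j+1),j+1}$ does not act on the variable $x_j$. Also, $B_{\mu(j),j}$ and $B_{\mu(j+1),j+1}$ act on the disjoint pairs $(x_{\mu(j)},x_j)$ and $(x_{\mu(j+1)},x_{j+1})$.

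Write each free propagator as a product of one-variable propagators, and use the group property $e^{i(t_a-t_b)\Delta}e^{i(t_b-t_c)\Delta}=e^{i(t_a-t_c)\Delta}$ in each variable. Then one checks that the expression equals the corresponding one for $\mu'=(j,j+1)\circ\mu\circ(j,j+1)$, with the times $t_j$ and $t_{j+1}$ interchanged. Here the product structure of $\Phi^{(k+1)}$ is harmless: $\Phi^{(k+1)}$ is a difference of tensor products $\phi^{\otimes (k+1)}$ and hence symmetric, so relabeling $x_j\leftrightarrow x_{j+1}$ leaves it invariant.

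Interchanging $t_j$ and $t_{j+1}$ turns the simplex ordering $\sigma$ into $(j,j+1)\circ\sigma$. Integrating then gives $I(\mu,\sigma,\Phi^{(k+1)})=I(\mu',\sigma',\Phi^{(k+1)})$. Iterating this gives the identity for composite moves $\rho$.

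\emph{Step 2: reduction to upper echelon form.} Summing over all $k!$ maps $\mu$ with $\sigma=\mathrm{id}$, I group the terms into KM equivalence classes. Each class contains exactly one upper echelon representative, so the sum over a class collapses to a single integral $\int_{\operatorname{Dom}(\mu_{\mathrm{up}})}J_{\mu_{\mathrm{up}}}^{(k+1)}\,d\underline{t}_{k+1}$. Here $\operatorname{Dom}(\mu_{\mathrm{up}})$ is the union of the simplexes $\{t_1\geq t_{\sigma(2)}\geq\cdots\geq t_{\sigma(k+1)}\}$ over the $\sigma$ arising in that class, which is a subset of $[0,t_1]^k$. Distinct $\mu$ in one class yield distinct $\sigma$, since $\mu'=\rho\circ\mu\circ\rho^{-1}$ and $\sigma'=\rho$ determine each other. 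Hence the simplexes are disjoint up to measure zero, and no term is double counted.

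To show that every $\mu$ reaches an upper echelon form, I use the standard KM algorithm. Scan for the first $j$ with $\mu(j)>\mu(j+1)$. Then $\mu(j+1)<\mu(j)<j$, so the move is acceptable. Repeatedly swap such adjacent pairs, bubble-sort style, until $\mu$ is nondecreasing. One checks that the swap keeps $\mu$ a collapsing map, with $\mu(2)=1$ and $\mu(i)<i$ preserved. Termination follows because the number of inversions of the sequence $(\mu(2),\dots,\mu(k+1))$ strictly decreases.

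Uniqueness of the representative is the step I expect to be most delicate. It is exactly what the cited KM result provides, and it transfers verbatim because only the combinatorics of $\mu$ matter. If needed, I would prove it via the invariant "tree" $j\mapsto\mu(j)$ up to relabeling: an upper echelon form is the canonical (breadth-first) labeling of that tree.

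\emph{Step 3: counting.} An upper echelon map is a nondecreasing sequence $1=\mu(2)\leq\mu(3)\leq\cdots\leq\mu(k+1)$ with $\mu(j)<j$. These are counted by Catalan numbers, so their number is at most $\binom{2k}{k}\leq 4^k$. This completes the proof of the lemma.
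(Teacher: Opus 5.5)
Your outline is the same Klainerman--Machedon board game the paper relies on. The paper itself gives no argument: it cites \cite{KM08} and asserts the invariance \eqref{equ:the equality under the km move}. Steps~1 and~3 are sound. The single-move identity with $\sigma\mapsto(j,j+1)\circ\sigma$ is correct. So is the observation that $\sigma$ determines $\mu'=\sigma^{-1}\circ\mu_{\mathrm{up}}\circ\sigma$, which makes the simplexes in a class distinct. The Catalan bound $C_k\leq\binom{2k}{k}\leq 4^k$ is also correct.

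The one step that fails as written is termination in Step~2. A move does not just swap the entries in positions $j$ and $j+1$. Because $\mu'=(j,j+1)\circ\mu\circ(j,j+1)$, every later value $\mu(l)\in\{j,j+1\}$ with $l\geq j+2$ is also relabeled $j\leftrightarrow j+1$, and this can create inversions. For instance, $(\mu(2),\dots,\mu(8))=(1,2,1,3,3,4,4)$ has one inversion. The first-descent move at $j=3$ is acceptable and produces $(1,1,2,4,4,3,3)$, which has four. The right potential is lexicographic order. The move fixes $\mu(2),\dots,\mu(j-1)$, since these are $<j$, and replaces $\mu(j)$ by $\mu(j+1)<\mu(j)$. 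So the sequence strictly decreases lexicographically, at any descent and not only the first. Since there are only $k!$ collapsing maps, the procedure terminates.

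Separately, the uniqueness of the upper echelon representative, which you single out as the delicate point, is not needed for this lemma. Group the $k!$ terms according to the upper echelon form your reduction reaches. Your injectivity remark already shows that the simplexes attached to a fixed target are distinct, and that is all that defining $\operatorname{Dom}(\mu)$ requires.

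If you do want uniqueness (the paper's later Proposition with $T_D(\mu)$ does use it, via Remark~\ref{remark:KM,skeleton}), the invariant must be the ordered tree, i.e. the skeleton of the admissible tree. The rooted tree $j\mapsto\mu(j)$ up to arbitrary relabeling is not enough. For example, $(1,1,2,2)$ and $(1,1,3,3)$ are both upper echelon with isomorphic unordered trees. They are not KM-relatable: $(1,1,3,3)$ admits no acceptable move at all. So a ``breadth-first labeling'' is canonical only once the order of siblings is fixed.
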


The time integration
domain $\operatorname{Dom}(\mu)$ is obviously complicated for large $k$, since it consists of a union of an enormous number of high-dimensional simplexes.
 To derive an explicit expression, we employ the combinatorial tree diagrams developed in \cite{CH22unc,CSZ22,CSZ26}. First,
we construct a binary tree with the following algorithm.

\begin{algorithm}[From collapsing map to admissible tree]\label{algorithm:generate an admissible tree}
~\\
\hspace*{1em}

$(1)$ Given a collapsing map $\mu$, set counter $j=1$.

$(2)$ For the current $j$, find the minimal indices
 $l$, $r$ so that $l>j$, $r>j$ and
\begin{align*}
&\mu(l)=\mu(j),\\
&\mu(r)=j,
\end{align*}
 Then assign $l$/$r$ as the left/right child of node $j$ in the tree.
If there is no such $l$/$r$, the corresponding child is absent.

$(3)$ If $j=k$, then stop. Otherwise, set $j=j+1$ and go to step $(2)$.
\end{algorithm}

We illustrate the algorithm with an example.
\begin{example}\label{example:admissible tree}
We consider the collapsing map given by
$$
\begin{tabular}{c|ccccc}
$j$&2&3&4&5&6\\
\hline
$\mu(j)$ &1&1&1&2&5
\end{tabular}
$$
\begin{minipage}{0.3\textwidth}
\begin{center}
\begin{tikzpicture}
\node (1) at (-0.8,0) {1};
\node (2) at (0,-1) {2};
\node (3) at (-0.8,-2) {3};
\node (5) at (0.8,-2) {5};
\draw[<-] (1)--(2);
\draw[-] (2)--(3);
\draw[<-] (2)--(5);
\end{tikzpicture}
\end{center}
\end{minipage}
\begin{minipage}{0.7\textwidth}
Following Algorithm \ref{algorithm:generate an admissible tree}, we start with $j=1$ and observe that $\mu(2)=1$.  For the left and right children of node $2$, we seek the minimal $a>1$, $b>1$ such that $\mu(a)=1$, $\mu(b)=2$.  In this example, we find $a=3$, $b=5$, so we put $3$ and $5$ as the left and right children of node $2$, respectively.
\end{minipage}

\begin{minipage}{0.3\textwidth}
\begin{center}
\begin{tikzpicture}
\node (1) at (-0.8,0) {1};
\node (2) at (0,-1) {2};
\node (3) at (-0.8,-2) {3};
\node (4) at (-1.6,-3) {4};
\node (5) at (0.8,-2) {5};
\draw[<-] (1)--(2);
\draw[-] (2)--(3);
\draw[<-] (2)--(5);
\draw[-] (3)--(4);
\end{tikzpicture}
\end{center}
\end{minipage}
\begin{minipage}{0.67\textwidth}
Next we proceed to  $j=3$. Since $\mu(3)=1$, we find the minimal $a>3$, $b>3$ such that $\mu(a)=\mu(3)=1$, $\mu(b)=3$. We find $a=4$ and while no such $b$ exists. Consequently, we put only $4$ as the left child of node $3$.
\end{minipage}

\begin{minipage}{0.67\textwidth}
We then move to $j=4$. Since there is no such $a>4$, $b>4$ satisfying $\mu(a)=\mu(4)=1$ and $\mu(b)=4$, we skip it. Moving to $j=5$, we find $\mu(6)=5$, so we put $6$ as the right child of node $5$. With all indices now present in the tree, the construction is complete and we arrive at the final tree.
\end{minipage}
\begin{minipage}{0.3\textwidth}
\begin{center}
\begin{tikzpicture}
\node (1) at (-0.8,0) {1};
\node (2) at (0,-1) {2};
\node (3) at (-0.8,-2) {3};
\node (4) at (-1.6,-3) {4};
\node (5) at (0.8,-2) {5};
\node (6) at (1.6,-3) {6};
\draw[<-] (1)--(2);
\draw[-] (2)--(3);
\draw[<-] (2)--(5);
\draw[<-] (5)--(6);
\draw[-] (3)--(4);
\end{tikzpicture}
\end{center}
\end{minipage}
\end{example}

We now introduce the notion of an admissible tree.
\begin{definition}[Admissible tree and skeleton]
A binary tree is called an admissible tree if if the label of every child node is strictly larger than the label of its parent node. Given an admissible tree, we call, the graph of the tree without any labels in its nodes, the skeleton of the tree.
\end{definition}
For instance, the skeleton of the tree in Example \ref{example:admissible tree} is shown as follows.
\begin{center}
\begin{tikzpicture}
\node[draw,circle,inner sep=0.2cm,outer sep=0.1cm] (1) at (-0.8,0) {};
\node[draw,circle,inner sep=0.2cm,outer sep=0.1cm] (2) at (0,-1) {};
\node[draw,circle,inner sep=0.2cm,outer sep=0.1cm] (3) at (-0.8,-2) {};
\node[draw,circle,inner sep=0.2cm,outer sep=0.1cm] (4) at (-1.6,-3) {};
\node[draw,circle,inner sep=0.2cm,outer sep=0.1cm] (5) at (0.8,-2) {};
\node[draw,circle,inner sep=0.2cm,outer sep=0.1cm] (6) at (1.6,-3) {};
\draw[<-] (1)--(2);
\draw[-] (2)--(3);
\draw[<-] (2)--(5);
\draw[<-] (5)--(6);
\draw[-] (3)--(4);
\end{tikzpicture}
\end{center}

Given an admissible binary tree, we can uniquely reconstruct a collapsing map $\mu$ that generates it. For notational convenience, we take the following notations.
\begin{equation}
\left\{
\begin{aligned}
&j\stackrel{L}{\to}i:\text{node $j$ is the left child of node $i$},\\
&j\stackrel{R}{\to}i:\text{node $j$ is the right child of node $i$},\\
&j\to i:\text{node $j$ is a child of node $i$}.
\end{aligned}
\right.
\end{equation}
The reconstruction is performed by the following algorithm.

\begin{algorithm}[From admissible tree to collapsing map]\label{algorithm:from admissible tree to collapsing map}
~\\
\hspace*{1em}

$(1)$ Given an admissible tree. Set counter $j=1$ and $\mu(2)=1$.

$(2)$ Given $j$, in the admissible tree $\al$,
\begin{align*}
&\text{if there exists $k_{1}$ such that  $k_{1}\stackrel{L}{\to} j$ in the tree, then $\mu(k_{1}):=\mu(j)$;}\\
 &\text{if there exists $k_{2}$ such that $k_{2}\stackrel{R}{\to} j$ in the tree, then $\mu(k_{2}):=j$.}
\end{align*}
Otherwise, go to step $(3)$.

$(3)$ Set $j=j+1$. If $j=k$, then stop, otherwise go to step $(2)$.
\end{algorithm}

\begin{remark}\label{remark:KM,skeleton}
Two collapsing maps $\mu$ and $\mu'$ are KM-relatable if and only if the trees corresponding to $\mu$ and $\mu'$ have the same skeleton. Moreover, if $\mu'=\mathrm{KM}(\rho)(\mu)$, then $\mathrm{Tree}(\mu')$ has the same skeleton to $\mathrm{Tree}(\mu)$ with each node
 $j$ replaced by $\rho(j)$. See \cite[Proposition 4.5]{CSZ22} for more details.
\end{remark}
\subsection{An Extended KM Board Game for the Quadratic Hierarchy}\label{sec:An Extended KM Board Game for the Quadratic Hierarchy}
We now proceed to the main part, namely how to compute time integration domain $\operatorname{Dom}(\mu)$.
We define a map $T_{D}$ which maps an admissible tree $\mathrm{Tree}(\mu)$ to a time integration domain as follows
\begin{equation} \label{equ:time integration domain generated by mu}
T_{D}(\mu)=\lr{t_{i}\geq t_{j}:j\to i\ \text{in the}\ \mathrm{Tree}(\mu)},
\end{equation}
where $j\to i$ denotes that node $i$ is a child of node $j$.

\begin{proposition}\label{lemma:upper echelon form, time integration domain}
Given a collapsing map $\mu$ in upper echelon form, we have
\begin{align*}
\sum_{\mu':\mu'\sim \mu}\int_{t_{1}\geq t_{2}\geq...\geq t_{k+1}}
J_{\mu'}^{(k+1)}(\Phi^{(k+1)})(t_{1},\underline{t}_{k+1})
d\underline{t}_{k+1}
=\int_{T_{D}(\mu)}J_{\mu}^{(k+1)}(\Phi^{(k+1)})(t_{1},\underline{t}_{k+1})
d\underline{t}_{k+1}.
\end{align*}
Consequently, we have
\begin{align}\label{equ:integration,upper echelon form, time integration domain}
\Phi^{(1)}(t_{1})=\sum_{\mu\in \mathrm{C}^{(k)}_{\mathrm{up}} }\int_{T_{D}(\mu)}J_{\mu}^{(k+1)}(\Phi^{(k+1)})(t_{1},\underline{t}_{k+1})
d\underline{t}_{k+1},
\end{align}
where there are at most $4^{k}$ terms inside the the class $\mathrm{C}^{(k)}_{\mathrm{up}}$.
\end{proposition}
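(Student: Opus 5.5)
We reduce the identity to a statement about sets of time orderings, using the KM equality \eqref{equ:the equality under the km move} together with Remark \ref{remark:KM,skeleton}. Fix $\mu$ in upper echelon form. By Remark \ref{remark:KM,skeleton}, each $\mu'\sim\mu$ has the form $\mu'=\rho\circ\mu\circ\rho^{-1}$ for some KM acceptable move $\rho$. Moreover, $\mathrm{Tree}(\mu')$ is $\mathrm{Tree}(\mu)$ with each label $j$ replaced by $\rho(j)$.

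Apply \eqref{equ:the equality under the km move} in reverse to the pair $(\mu',\mathrm{id})$. This rewrites the $\mu'$ summand as an integral of $J_{\mu}^{(k+1)}$ over the simplex $\{t_{1}\geq t_{\sigma(2)}\geq\cdots\geq t_{\sigma(k+1)}\}$, where $\sigma=\rho^{-1}$; the change of variables $t_j\mapsto t_{\rho(j)}$ is exactly what makes $J_{\mu'}$ become $J_\mu$. The left-hand side therefore becomes
\[
\sum_{\sigma\in S_{\mu}}\int_{t_1\geq t_{\sigma(2)}\geq\cdots\geq t_{\sigma(k+1)}}J^{(k+1)}_{\mu}(\Phi^{(k+1)})\,d\underline{t}_{k+1},
\]
where $S_\mu$ is the set of permutations arising this way, one per distinct $\mu'$ in the class. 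It then suffices to prove the following purely combinatorial identity, modulo measure-zero sets:
\[
\bigsqcup_{\sigma\in S_\mu}\{t_1\geq t_{\sigma(2)}\geq\cdots\geq t_{\sigma(k+1)}\}=T_D(\mu).
\]

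To prove this identity, characterize $S_\mu$ as the set of orderings of $\{2,\dots,k+1\}$ that are linear extensions of the tree partial order, i.e.\ orderings in which every parent precedes its children in $\mathrm{Tree}(\mu)$. The argument has three steps.
\begin{enumerate}
\item Each such $\sigma$ yields a genuine relabeling: reading off the positions $\rho(j)$ gives a tree on the same skeleton whose labels still increase from parent to child. That tree is admissible, so Algorithm \ref{algorithm:from admissible tree to collapsing map} produces a collapsing map $\mu'$ with $\mu'\sim\mu$.
\item Conversely, every admissible labeling of the skeleton arises from exactly one linear extension, and distinct labelings give distinct $\mu'$, since Algorithms \ref{algorithm:generate an admissible tree} and \ref{algorithm:from admissible tree to collapsing map} are mutually inverse.
\item The simplices indexed by linear extensions of a poset are disjoint up to null sets, and their union is the order polytope $\{t_i\geq t_j : j\to i\}$, which is precisely $T_D(\mu)$ as defined in \eqref{equ:time integration domain generated by mu}.
\end{enumerate}
Summing over KM classes, each of which contains exactly one upper echelon representative, and invoking Lemma \ref{lemma:KM board game} for the count $4^k$ then gives \eqref{equ:integration,upper echelon form, time integration domain}.

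The main obstacle is making the correspondence in step 1 rigorous. One must check that the relabeling by $\rho$ transports $J_{\mu'}$ to $J_\mu$ with the correct variable change, and that the orderings produced by composite acceptable moves are exactly all linear extensions of the tree poset, not a proper subset. I would first try induction on $k$: remove the largest-labeled leaf, apply the inductive hypothesis to the remaining tree, and then insert the leaf's time variable in every admissible position below its parent. This mirrors the proof of \cite[Proposition 4.5]{CSZ22}, adapted to the fact that the quadratic $B_{j,k+1}$ removes the variable $x_{k+1}$ rather than acting by a trace.
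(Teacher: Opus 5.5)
Your route is the paper's. You use \eqref{equ:the equality under the km move} to rewrite the class sum as $\sum_{\rho\in\Sigma(\mu)}I(\mu,\rho^{-1},\Phi^{(k+1)})$, identify $\Sigma(\mu)$ with the linear extensions of the tree order via Remark \ref{remark:KM,skeleton}, and recognize the union of the corresponding simplices as $T_D(\mu)$. Your steps 2 and 3 are fine and supply details the paper leaves implicit: a labelled binary tree with left/right distinguished has no nontrivial relabelings, so $\rho$ is determined by $\mu'$, and linear extensions tile the order polytope up to null sets.

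The step you flagged as the main obstacle, however, genuinely fails. The paper's proof has the same gap, since it only asserts \eqref{equ:the equality under the km move}. In the interaction picture,
\[
J^{(k+1)}_{\mu}(\Phi^{(k+1)})=U^{(1)}(t_1)\,\widetilde{B}_{\mu(2),2}(t_2)\cdots\widetilde{B}_{\mu(k+1),k+1}(t_{k+1})\,U^{(k+1)}(-t_{k+1})\Phi^{(k+1)}(t_{k+1}),\qquad \widetilde{B}_{m,j}(t)=U^{(j-1)}(-t)B_{m,j}U^{(j)}(t).
\]
Relabeling by $\rho$ transports the collision operators correctly. The datum of $J_{\mu'}$, though, sits at the time of node $k+1$ of $\mathrm{Tree}(\mu')$. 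After the change of variables it lands at $t_{\rho^{-1}(k+1)}$, the smallest time on $\{t_1\geq t_{\rho^{-1}(2)}\geq\cdots\geq t_{\rho^{-1}(k+1)}\}$, not at $t_{k+1}$. Hence $I(\mu',\mathrm{Id},\Phi^{(k+1)})=I(\mu,\rho^{-1},\Phi^{(k+1)})$ fails whenever $\rho(k+1)\neq k+1$, unless $U^{(k+1)}(-t)\Phi^{(k+1)}(t)$ is independent of $t$.

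Here is a concrete counterexample to the first identity:
\begin{itemize}
\item Take $k=3$ and $(\mu(2),\mu(3),\mu(4))=(1,1,2)$. Its class is $\{(1,1,2),(1,2,1)\}$, linked by $\rho=(3\,4)$.
\item Take $B(u,u)=u^{2}$ and $\Phi^{(4)}=\phi^{\otimes 4}$, where $\phi(t)=f(t)$ is a spatially constant solution of $if'=f^{2}$ on $\T^{d}$.
\item Every $J_{\mu}$ then equals $f(t_4)^4$, so the claimed identity reads $2\int_{t_1\geq t_2\geq t_3\geq t_4}f(t_4)^4=\int_{t_1\geq t_2,\ t_2\geq t_3,\ t_2\geq t_4}f(t_4)^4$.
\item This holds for all $t_1$ only if $f^4$ is constant, which fails for nontrivial $f$.
\end{itemize}

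There are two ways to obtain a correct version:
\begin{itemize}
\item Carry the datum as $U^{(k+1)}(t_{k+1}-t_*)\Phi^{(k+1)}(t_*)$ with $t_*=\min_j t_j$. This is invariant under the relabelings and agrees with the original on $\{t_1\geq t_2\geq\cdots\geq t_{k+1}\}$, so your steps 1--3 go through verbatim. The integrand on $T_D(\mu)$ is then no longer $J_\mu(\Phi^{(k+1)}(t_{k+1}))$.
\item Restrict to moves fixing $k+1$. This replaces $T_D(\mu)$ by $T_D(\mu)\cap\{t_{k+1}\leq t_j\ \text{for all}\ j\}$ and refines the classes by the leaf occupied by $k+1$, costing a factor $k$ in the count.
\end{itemize}
Your planned induction, which reinserts the leaf $k+1$ in every admissible position, is exactly where this issue would have surfaced.
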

\begin{proof}
Let $\Sigma(\mu)$ be the set of all acceptable moves with respect to $\mu$. Then by the equality $(\ref{equ:the equality under the km move})$, we have
\begin{align*}
\sum_{\mu':\mu'\sim \mu}I(\mu',\operatorname{Id},\Phi^{(k+1)})=&\sum_{\rho \in \Sigma(\mu)}I(\mu,\rho^{-1},\Phi^{(k+1)}).
\end{align*}

We recall that two collapsing maps $\mu$ and $\mu'$ are KM-relatable if and only if their corresponding trees share the same skeleton. This yields the characterisation
\begin{align*}
\Sigma(\mu)=\lr{\rho:\rho(i)<\rho(j),\ \text{if $j\to i$ in the $\mathrm{Tree}(\mu)$}}.
\end{align*}
Consequently, we obtain
\begin{align*}
\bigcup_{\rho\in \Sigma(\mu)}\lr{t_{1}\geq t_{\rho^{-1}(2)}\geq \ccc \geq t_{\rho^{-1}(k+1)}}=T_{D}(\mu).
\end{align*}

The desired identity \eqref{equ:integration,upper echelon form, time integration domain} then follows from
\begin{align*}
\Phi^{(1)}(t_{1})=&\sum_{\mu}\int_{t_{1}\geq t_{2}\geq \ccc\geq t_{k+1}}J_{\mu}^{(k+1)}(\Phi^{(k+1)})(t_{1},\underline{t}_{k+1})
d\underline{t}_{k+1}\\
=&\sum_{\mu\in \mathrm{C}^{(k)}_{\mathrm{up}}}\sum_{\mu':\mu'\sim \mu}\int_{t_{1}\geq t_{2}\geq...\geq t_{k+1}}
J_{\mu'}^{(k+1)}(\Phi^{(k+1)})(t_{1},\underline{t}_{k+1})
d\underline{t}_{k+1}\\
=&\sum_{\mu\in \mathrm{C}^{(k)}_{\mathrm{up}} }\int_{T_{D}(\mu)}J_{\mu}^{(k+1)}(\Phi^{(k+1)})(t_{1},\underline{t}_{k+1})
d\underline{t}_{k+1}.
\end{align*}
\end{proof}

To make this more concrete, let us examine the following example.
\begin{example}
For the map $\mu_{1}$ given in Example \ref{example:admissible tree},
there are six acceptable moves with respect to $\mu_{1}$ such that
\begin{align*}
\mu_{i}=\mathrm{KM}(\rho_{i})(\mu_{1}).
\end{align*}

\begin{table}[htbp]
\centering
\caption{Acceptable moves and Time integration domain}
\label{table:Acceptable moves and Time integration domain}
\begin{tabular}{c|ccccc||c|ccccc||c}
$j$&2&3&4&5&6& $j$&2&3&4&5&6&\text{Time integration domain} \\
\hline
$\rho_{1}(j)$ &2&$3$&$4$&$5$&6& $\rho_{1}^{-1}(j)$ &2&$3$&$4$&$5$&6&$\lr{t_{1}\geq t_{2}\geq t_{3}\geq t_{4}\geq t_{5}\geq t_{6}}$\\
$\rho_{2}(j)$ &2&$3$&$5$&$4$&6& $\rho_{2}^{-1}(j)$ &2&$3$&$5$&$4$&6&$\lr{t_{1}\geq t_{2}\geq t_{3}\geq t_{5}\geq t_{4}\geq t_{6}}$\\
$\rho_{3}(j)$ &2&$3$&$6$&$4$&5& $\rho_{3}^{-1}(j)$ &2&$3$&$5$&$6$&4&$\lr{t_{1}\geq t_{2}\geq t_{3}\geq t_{5}\geq t_{6}\geq t_{4}}$\\
$\rho_{4}(j)$ &2&$4$&$5$&$3$&6& $\rho_{4}^{-1}(j)$ &2&$5$&$3$&$4$&6&$\lr{t_{1}\geq t_{2}\geq t_{5}\geq t_{3}\geq t_{4}\geq t_{6}}$\\
$\rho_{5}(j)$ &2&$4$&$6$&$3$&5& $\rho_{5}^{-1}(j)$ &2&$5$&$3$&$6$&4&$\lr{t_{1}\geq t_{2}\geq t_{5}\geq t_{3}\geq t_{6}\geq t_{4}}$\\
$\rho_{6}(j)$ &2&$5$&$6$&$3$&4& $\rho_{6}^{-1}(j)$ &2&$5$&$6$&$3$&4&$\lr{t_{1}\geq t_{2}\geq t_{5}\geq t_{6}\geq t_{3}\geq t_{4}}$
\end{tabular}
\end{table}

The six KM-relatable maps and their associated trees are displayed below.

\begin{minipage}{0.32\textwidth}
\centering
$$
\begin{tabular}{c|ccccc}
$j$&2&3&4&5&6\\
\hline
$\mu_{1}(j)$ &1&1&1&2&5
\end{tabular}
$$

\begin{tikzpicture}
\node (1) at (-0.8,0) {1};
\node (2) at (0,-1) {2};
\node (3) at (-0.8,-2) {3};
\node (4) at (-1.6,-3) {4};
\node (5) at (0.8,-2) {5};
\node (6) at (1.6,-3) {6};
\draw[<-] (1)--(2);
\draw[-] (2)--(3);
\draw[<-] (2)--(5);
\draw[<-] (5)--(6);
\draw[-] (3)--(4);
\end{tikzpicture}

\end{minipage}
\begin{minipage}{0.32\textwidth}
\centering
$$
\begin{tabular}{c|ccccc}
$j$&2&3&4&5&6\\
\hline
$\mu_{2}(j)$ &1&1&2&1&4
\end{tabular}
$$

\begin{tikzpicture}
\node (1) at (-0.8,0) {1};
\node (2) at (0,-1) {2};
\node (3) at (-0.8,-2) {3};
\node (5) at (-1.6,-3) {5};
\node (4) at (0.8,-2) {4};
\node (6) at (1.6,-3) {6};
\draw[<-] (1)--(2);
\draw[-] (2)--(3);
\draw[<-] (2)--(4);
\draw[<-] (4)--(6);
\draw[-] (3)--(5);
\end{tikzpicture}

\end{minipage}
\begin{minipage}{0.32\textwidth}
\centering
$$
\begin{tabular}{c|ccccc}
$j$&2&3&4&5&6\\
\hline
$\mu_{3}(j)$ &1&1&2&4&1
\end{tabular}
$$

\begin{tikzpicture}
\node (1) at (-0.8,0) {1};
\node (2) at (0,-1) {2};
\node (3) at (-0.8,-2) {3};
\node (6) at (-1.6,-3) {6};
\node (4) at (0.8,-2) {4};
\node (5) at (1.6,-3) {5};
\draw[<-] (1)--(2);
\draw[-] (2)--(3);
\draw[<-] (2)--(4);
\draw[<-] (4)--(5);
\draw[-] (3)--(6);
\end{tikzpicture}

\end{minipage}

\begin{minipage}{0.32\textwidth}
\centering
$$
\begin{tabular}{c|ccccc}
$j$&2&3&4&5&6\\
\hline
$\mu_{4}(j)$ &1&2&1&1&3
\end{tabular}
$$

\begin{tikzpicture}
\node (1) at (-0.8,0) {1};
\node (2) at (0,-1) {2};
\node (4) at (-0.8,-2) {4};
\node (5) at (-1.6,-3) {5};
\node (3) at (0.8,-2) {3};
\node (6) at (1.6,-3) {6};
\draw[<-] (1)--(2);
\draw[-] (2)--(4);
\draw[<-] (2)--(3);
\draw[<-] (3)--(6);
\draw[-] (4)--(5);
\end{tikzpicture}

\end{minipage}
\begin{minipage}{0.32\textwidth}
\centering
$$
\begin{tabular}{c|ccccc}
$j$&2&3&4&5&6\\
\hline
$\mu_{5}(j)$ &1&2&1&3&1
\end{tabular}
$$

\begin{tikzpicture}
\node (1) at (-0.8,0) {1};
\node (2) at (0,-1) {2};
\node (4) at (-0.8,-2) {4};
\node (6) at (-1.6,-3) {6};
\node (3) at (0.8,-2) {3};
\node (5) at (1.6,-3) {5};
\draw[<-] (1)--(2);
\draw[-] (2)--(4);
\draw[<-] (2)--(3);
\draw[<-] (3)--(5);
\draw[-] (4)--(6);
\end{tikzpicture}

\end{minipage}
\begin{minipage}{0.32\textwidth}
\centering
$$
\begin{tabular}{c|ccccc}
$j$&2&3&4&5&6\\
\hline
$\mu_{6}(j)$ &1&2&3&1&1
\end{tabular}
$$

\begin{tikzpicture}
\node (1) at (-0.8,0) {1};
\node (2) at (0,-1) {2};
\node (5) at (-0.8,-2) {5};
\node (6) at (-1.6,-3) {6};
\node (3) at (0.8,-2) {3};
\node (4) at (1.6,-3) {4};
\draw[<-] (1)--(2);
\draw[-] (2)--(5);
\draw[<-] (2)--(3);
\draw[<-] (3)--(4);
\draw[-] (5)--(6);
\end{tikzpicture}

\end{minipage}

From the time integration domains listed in Table \ref{table:Acceptable moves and Time integration domain}, we observe that
\begin{align*}
&\bigcup_{\rho_{i}\in \Sigma(\mu_{1})}\lr{t_{1}\geq t_{\rho_{i}^{-1}(2)}\geq t_{\rho_{i}^{-1}(3)}\geq t_{\rho_{i}^{-1}(4)}\geq t_{\rho_{i}^{-1}(5)}\geq t_{\rho_{i}^{-1}(6)}}\\
=&\lr{t_{1}\geq t_{2},t_{2}\geq t_{3},t_{2}\geq t_{5},t_{3}\geq t_{4},t_{5}\geq t_{6}}.
\end{align*}
By the definition \eqref{equ:time integration domain generated by mu}, this union is precisely
\begin{align*}
T_{D}(\mu_{1})=\lr{t_{1}\geq t_{2},t_{2}\geq t_{3},t_{2}\geq t_{5},t_{3}\geq t_{4},t_{5}\geq t_{6}}.
\end{align*}

\end{example}

\section{Compatible Time Integration Domain}\label{sec:Compatible Time Integration Domain}
\subsection{Duhamel Expansion and Duhamel Tree}\label{sec:Duhamel Expansion and Duhamel Tree}
We initiate the analysis of the Duhamel expansion through the construction of a Duhamel tree, abbreviated as a $D$-tree. The resulting tree structure yields a diagrammatic representation of the Duhamel expansion
 $J_{\mu}^{(k+1)}$. See also \cite{CSZ26} for the Boltzmann case with the Duhamel tree diagram representation.

\begin{algorithm}[Duhamel Tree] \label{algorithm:duhamel tree}
~\\

$(1)$ Let $D^{(1)}$ be a starting node in the $D$-tree, and $D^{(2)}$ be the middle child of $D^{(1)}$.
Set counter $j=2$.

$(2)$
Given $j$,
find the minimal indices $l$ and $r$ with
$l\geq j+1$, $r\geq j+1$ such that
\begin{align*}
\mu(l)=\mu(j),
\quad \mu(r)=j.
\end{align*}
 Then place
$D^{(l)}$ as the left and and $D^{(r)}$ as the right child of $D^{(j)}$ in the $D$-tree. If there is no such $l$ or $r$,
place $\phi$ as the left or right child of $D^{(j)}$ in the $D$-tree.

$(3)$ If $j=k+1$, then stop. Otherwise, set $j=j+1$ and go to step $(2)$.
\end{algorithm}

A key observation is that the $D$-tree shares almost the same structure as the admissible tree introduced in Section \ref{sec:Admissible Tree}.

We illustrate the algorithm with the following example, which continues the one from Example \ref{example:admissible tree}.
\begin{example}\label{ex:duhamel tree}
We consider the collapsing map given by
$$
\begin{tabular}{c|ccccc}
$j$&2&3&4&5&6\\
\hline
$\mu(j)$ &1&1&1&2&5\\
\end{tabular}
$$
\begin{minipage}{0.3\textwidth}
\centering
\begin{tikzpicture}
\node{$D^{(1)}$}[sibling distance=60pt,level distance=1.2cm]
child{node{$D^{(2)}$}
child{node{$D^{(3)}$}}
child{node{$D^{(5)}$}}
};
\end{tikzpicture}
\end{minipage}
\begin{minipage}{0.67\textwidth}
Starting with $D^{(1)}$ its middle child $D^{(2)}$, we set $j=2$. The minimal indices satisfying
 $\mu(l)=1$ and $\mu(r)=2$ are $l=3$ and $r=5$. Hence, we put $D^{(3)}$ as the left child and $D^{(5)}$
as the right child of $D^{(2)}$.
\end{minipage}

\begin{minipage}{0.67\textwidth}
Next, we move to the counter $j=3$. We find that $\mu(4)=3$ and there is no $r\geq 4$ such that $\mu(r)=3$. Hence, we put $D^{(4)}$ as the left child and $\phi$ as the right child of $D^{(3)}$.
\end{minipage}
\begin{minipage}{0.3\textwidth}
\centering
\begin{tikzpicture}
\node{$D^{(1)}$}[sibling distance=60pt,level distance=1cm]
child{node{$D^{(2)}$}
child{node{$D^{(3)}$}
child{node{$D^{(4)}$}}
child{node{$\phi$}}}
child{node{$D^{(5)}$}}
}
;
\end{tikzpicture}
\end{minipage}

Finally, by iteratively applying the steps outlined in Algorithm \ref{algorithm:duhamel tree}, we construct the complete $D$-tree shown in the following tree diagram Fig. \ref{figure:duhamel tree}.
\begin{figure}[H]
\begin{tikzpicture}
\node{$D^{(1)}$}[sibling distance=80pt,level distance=1cm]
child{node{$D^{(2)}$}
child{node{$D^{(3)}$}[sibling distance=40pt,level distance=1cm]
child{node{$D^{(4)}$}
child{node{$\phi$}}
child{node{$\phi$}}}
child{node{$\phi$}}}
child{node{$D^{(5)}$}[sibling distance=40pt,level distance=1cm]
child{node{$\phi$}}
child{node{$D^{(6)}$}[sibling distance=40pt,level distance=1cm]
child{node{$\phi$}}
child{node{$\phi$}}
}}
}
;
\end{tikzpicture}
\caption{Duhamel Tree}
\label{figure:duhamel tree}
\end{figure}

From this
$D$-tree, we can directly read off the Duhamel expansion. According to \eqref{equ:J,k,formula}, we have
\begin{align}\label{equ:expression,J6}
J_{\mu}^{(6)}(\phi^{\otimes 6})(t_{1},\underline{t}_{6})=
U_{1,2}^{(1)}B_{1,2}^{(2)}U_{2,3}^{(2)}B_{1,3}^{(3)}U_{3,4}^{(3)}B_{1,4}^{(4)}
U_{4,5}^{(4)}B^{(5)}_{2,5}U_{5,6}^{(5)}B^{(6)}_{5,6}\phi^{\otimes 6}.
\end{align}
On the other hand, following the structure of the
$D$-tree \eqref{figure:duhamel tree}, we define
\begin{align*}
D^{(1)}=&U_{1}D^{(2)},\\
D^{(2)}=&U_{-2}B(U_{2}D^{(3)},U_{2}D^{(5)}),\\
D^{(3)}=&U_{-3}B(U_{3}D^{(4)},U_{3,6}\phi),\\
D^{(4)}=&U_{-4}B(U_{4,6}\phi,U_{4,6}\phi),\\
D^{(5)}=&U_{-5}B(U_{5,6}\phi,U_{5}D^{(6)}),\\
D^{(6)}=&U_{-6}B(\phi,\phi),
\end{align*}
where we have used the shorthand $U_{-i}=U(-t_{i})$ and $U_{i,j}=U(t_{i}-t_{j})$.
Then by expanding $D^{(1)}$, we get
\begin{align}
D^{(1)}=U_{1,2}B\lrc{U_{2,3}B\lrc{U_{3,4}B(U_{4,6}\phi,U_{4,6}\phi),U_{3,6}\phi},
U_{2,5}B\lrc{U_{5,6}\phi,U_{5,6}B(\phi,\phi)}},
\end{align}
which actually coincides with the expression \eqref{equ:expression,J6}. Hence, we have $J_{\mu}^{(6)}(\phi^{\otimes 6})=D^{(1)}$.
\end{example}

Next, we present a general algorithm to derive the Duhamel expansion from its corresponding $D$-tree.
\begin{algorithm}[From $D$-tree to Duhamel expansion]\label{algorithm:from d-tree to duhamel expansion}
~\\
\hspace*{1em}$(1)$  In the $D$-tree, we replace $\phi$ by $U_{-k-1}\phi$ and set
\begin{align*}
&D^{(k+1)}=U_{-k-1}B(\phi,\phi).
\end{align*}
 Set counter $j=k$.

$(2)$ Given $j$, set
\begin{align*}
&D^{(j)}=U_{-j}B(U_{j}C_{r},U_{j}C_{l}),
\end{align*}
where $C_{l}$ is the left child and $C_{r}$ is the right child of $D^{(l)}$ in the updated $D$-tree.

$(3)$ Set $j=j-1$. If $j=1$, set
\begin{align*}
&D^{(1)}=U_{1}D^{(2)},
\end{align*}
and stop, otherwise go to step $(2)$.

\end{algorithm}

By employing Algorithm \ref{algorithm:from d-tree to duhamel expansion}, we derive the following proposition.
\begin{proposition}\label{prop:from d-tree to duhamel expansion}
There holds that
\begin{align}\label{equ:from d-tree to duhamel expansion}
J_{\mu}^{(k+1)}(\phi^{\otimes (k+1)})(t_{1},\underline{t}_{k+1})=D^{(1)}(t_{1},\underline{t}_{k+1}).
\end{align}
\end{proposition}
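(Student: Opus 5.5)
The plan is to prove \eqref{equ:from d-tree to duhamel expansion} by a downward induction on the level $j$. The claim to carry through the induction is that the partial Duhamel product, acting on the tensor product $\phi^{\otimes(k+1)}$, is again a tensor product. Moreover, each of its factors is one of the subtrees of the $D$-tree. The only structural fact needed is that every $B^{(j)}_{i,j}$, and every free propagator $U^{(j)}$, maps tensor products to tensor products. Indeed $U^{(j)}=\prod_{i}e^{it\Delta_{x_i}}$ acts factorwise. Also $B_{i,j}^{(j)}$ replaces the $i$-th factor by $B(\psi_i,\psi_j)$ and deletes the $j$-th one.

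First I will write $U^{(j)}_{j,j+1}=U^{(j)}(t_j)\,U^{(j)}(-t_{j+1})$ (interaction picture). For $2\le j\le k+1$ I set
\[
\Psi^{(j)}:=U^{(j-1)}(-t_j)\,B^{(j)}_{\mu(j),j}U^{(j)}_{j,j+1}B^{(j+1)}_{\mu(j+1),j+1}\cdots U^{(k)}_{k,k+1}B^{(k+1)}_{\mu(k+1),k+1}\,\phi^{\otimes(k+1)},
\]
with $\phi^{\otimes(k+1)}$ understood as $\prod_i U_{-k-1}\phi$ after pulling out $U(t_{k+1})$. This matches step $(1)$ of Algorithm \ref{algorithm:from d-tree to duhamel expansion}. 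Then $J^{(k+1)}_\mu=U_1\Psi^{(2)}$, and $\Psi^{(j)}=U(-t_j)B_{\mu(j),j}U(t_j)\Psi^{(j+1)}$. The induction hypothesis at level $j$ is
\[
\Psi^{(j)}=\prod_{i=1}^{j-1}F^{(j)}_i(x_i),\qquad F^{(j)}_i=\begin{cases} D^{(m)}, & m=\min\{m\ge j:\mu(m)=i\}\ \text{exists},\\ U_{-k-1}\phi, & \text{otherwise}.\end{cases}
\]
In words, $F^{(j)}_i$ is the subtree currently ``attached'' to coordinate $i$.

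For the inductive step $j+1\to j$, only the coordinates $\mu(j)$ and $j$ are affected. By the definition of $B^{(j)}_{\mu(j),j}$, the new $\mu(j)$-th factor is $U_{-j}B\bigl(U_jF^{(j+1)}_{\mu(j)},U_jF^{(j+1)}_j\bigr)$. By the hypothesis, $F^{(j+1)}_{\mu(j)}$ is $D^{(l)}$ with $l$ minimal, $l\ge j+1$, $\mu(l)=\mu(j)$, or $\phi$ if no such $l$ exists. This is exactly the left child of $D^{(j)}$ in Algorithm \ref{algorithm:duhamel tree}. Similarly $F^{(j+1)}_j$ is $D^{(r)}$ with $r$ minimal, $\mu(r)=j$, which is the right child. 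So the new factor is precisely $D^{(j)}$ as built in Algorithm \ref{algorithm:from d-tree to duhamel expansion}. At the same time, $j=\min\{m\ge j:\mu(m)=\mu(j)\}$, so $F^{(j)}_{\mu(j)}=D^{(j)}$ is consistent with the formula. All other factors $i\neq\mu(j)$ are unchanged, and for them the minimum defining $F_i$ is the same at levels $j$ and $j+1$ because $\mu(j)\neq i$. This closes the induction. At $j=2$ we get $\Psi^{(2)}=D^{(2)}$, hence $J^{(k+1)}_\mu(\phi^{\otimes(k+1)})=U_1D^{(2)}=D^{(1)}$.

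The main obstacle is purely bookkeeping, and I would handle it carefully. $B$ is not symmetric (because of the terms $|u|^2$ and $\bar u^2$). So the argument order must be $B(\text{left},\text{right})=B(\text{coordinate }\mu(j),\text{coordinate }j)$, following the definition of $B^{(k+1)}_{j,k+1}$. This order should be reconciled with the statement $B(U_jC_r,U_jC_l)$ in Algorithm \ref{algorithm:from d-tree to duhamel expansion}, which appears to have the children swapped relative to Example \ref{ex:duhamel tree}. I would also check that the conjugations inside $B$ are applied after the propagators $U_j$, which is automatic in the form $U_{-j}B(U_j\cdot,U_j\cdot)$. Finally, the base case must be checked at the deepest nodes: a node whose two children are both $\phi$ gives $U_{-m}B(U_{m,k+1}\phi,U_{m,k+1}\phi)$, as in the example.
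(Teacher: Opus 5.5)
Your proof is correct and follows essentially the paper's route, made explicit: the paper only asserts the identity as a consequence of unwinding Algorithm~\ref{algorithm:from d-tree to duhamel expansion} from $j=k+1$ downward and checks it on Example~\ref{ex:duhamel tree}, and your downward induction with the invariant $F^{(j)}_i$ (the subtree currently attached to coordinate $i$) is exactly the bookkeeping that justifies that algorithm. You are also right that the argument order there should read $U_{-j}B(U_jC_l,U_jC_r)$, which matches $B^{(j)}_{\mu(j),j}$, the worked example, and the later integration algorithm.
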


\subsection{Compatible Time Integration Domain}\label{subsection:Compatible Time Integration Domain}
The concept of a compatible time integration domain was initially proposed in \cite{CH22unc} for the purpose of applying $U$-$V$ space techniques to cubic nonlinearities. For the quadratic hierarchy studied here, the structure differs from that of the cubic Gross-Pitaevskii hierarchy. We find that,
without additional operations on the tree structure, the time integration domain is indeed compatible with the subsequent $U$-$V$ bilinear estimates.
We first introduce the concept of the compatible time integration domain
  in terms of the Duhamel trees and illustrate it through an example.

\begin{definition}\label{definition:compatible time integration domain}
We define the compatible time integration domain as follows
\begin{align}\label{equ:compatible time integration domain}
T_{C}(\mu)=\lr{t_{i}\geq t_{j}:D^{(j)}\to D^{(i)}}
\end{align}
where $D^{(j)}\to D^{(i)}$ denotes that $D^{(j)}$ is the child of $D^{(i)}$. Moreover, we say that $D^{(l)}$ is the offspring of $D^{(j)}$ if there exist $l_{1}$,...,$l_{r}$ such that
$$D^{(l)}\to D^{(l_{1})}\to \ccc \to D^{(l_{r})}\to D^{(j)}.$$
\end{definition}
\begin{remark}
Since the Duhamel tree, after ignoring the offspring labeled $\phi$, shares the same structure as the admissible tree, the compatible time integration domain is exactly $T_{D}(\mu)$. In other words, we have
\begin{align}\label{equ:compatible,Tc,Td}
T_{C}(\mu)=T_{D}(\mu).
\end{align}
\end{remark}

\begin{example} \label{example:duhamel integral with the compatible time domain}
We continue with Example \ref{ex:duhamel tree} and compute
\begin{align*}
\int_{T_{C}(\mu)}J_{\mu}^{(6)}(\phi^{\otimes 6})(t_{1},\underline{t}_{6})d\underline{t}_{6}.
\end{align*}
From the $D$-tree in Fig. $\ref{figure:duhamel tree}$, the compatible time integration domain is given by
\begin{align*}
\int_{t_{2}=0}^{t_{1}}\int_{t_{3}=0}^{t_{2}}
\int_{t_{5}=0}^{t_{2}}\int_{t_{4}=0}^{t_{3}}\int_{t_{6}=0}^{t_{5}}.
\end{align*}
We write $\int_{t_{6}=0}^{t_{1}}$ on the outside and hence $\int_{t_{5}=0}^{t_{1}}$
changes into $\int_{t_{5}=t_{6}}^{t_{1}}$. Then the domain becomes
\begin{align*}
\int_{t_{6}=0}^{t_{1}}\int_{t_{2}=t_{6}}^{t_{1}}\int_{t_{3}=0}^{t_{2}}
\int_{t_{5}=t_{6}}^{t_{2}}\int_{t_{4}=0}^{t_{3}}.
\end{align*}
Thus, we can rewrite the integral as
\begin{align*}
&\int_{T_{C}(\mu)}J_{\mu}^{(6)}(\phi^{\otimes 6})(t_{1},\underline{t}_{6})d\underline{t}_{6}\\
=&\int_{t_{6}=0}^{t_{1}}
\int_{t_{2}=t_{6}}^{t_{1}}\int_{t_{3}=0}^{t_{2}}
\int_{t_{5}=t_{6}}^{t_{2}}\int_{t_{4}=0}^{t_{3}}U_{1}D^{(2)}d\underline{t}_{6}\\
=&\int_{t_{6}=0}^{t_{1}}
\int_{t_{2}=t_{6}}^{t_{1}}U_{1,2}B\lrs{\int_{t_{3}=0}^{t_{2}}
\int_{t_{4}=0}^{t_{3}}U_{2}D^{(3)},\int_{t_{5}=t_{6}}^{t_{2}}U_{2}D^{(5)}}d\underline{t}_{6}\\
=&\int_{t_{6}=0}^{t_{1}}
\int_{t_{2}=t_{6}}^{t_{1}}U_{1,2}B\lrc{\int_{t_{3}=0}^{t_{2}}
U_{2,3}B\lrs{\int_{t_{4}=0}^{t_{3}}U_{3}D^{(4)},U_{3,6}\phi},\int_{t_{5}=t_{6}}^{t_{2}}U_{2,5}
B(U_{5,6}\phi,U_{5}D^{(6)})}d\underline{t}_{6}
\end{align*}
where $D^{(j)}$ is shown in Example \ref{ex:duhamel tree}. We can see that all the Duhamel structures are fully compatible with $U$-$V$ multilinear estimates, which we will show in Section \ref{sec:Iterative Estimates}.
\end{example}

We now state a general algorithm that performs the Duhamel integration over the compatible domain.
\begin{algorithm}[From $D$-tree to Duhamel integration]\label{algorithm:from d-tree to duhamel integration}
~\\
\hspace*{1em}$(1)$ Let
$$Q^{(k+1)}(t_{k+1})=U_{-k-1}D^{(k+1)}(t_{k+1}),$$
and replace $D^{(k+1)}$ by $Q^{(k+1)}(t_{k+1})$ in the $D$-tree.

$(2)$ Set counter $j=k$.

$(3)$ Given $j$, there exists only one $i$ such that $D^{(j)}\to D^{(i)}$. Then there will be two cases as follows.

Case 1. $D^{(k+1)}$ is the offspring of $D^{(j)}$. Then set
\begin{align*}
Q^{(j)}(t_{i},t_{k+1})
=&\int_{t_{j}=t_{k+1}}^{t_{i}}U_{-j}B\lrs{U_{j}C_{l},U_{j}C_{r}
}dt_{j}.
\end{align*}

Case 2. $D^{(k+1)}$ is not the offspring of $D^{(l)}$. Then set
\begin{align*}
Q^{(j)}(t_{i},t_{k+1})
=&\int_{t_{j}=0}^{t_{i}}
U_{-j}B\lrs{U_{j}C_{l},U_{j}C_{r}
}dt_{j}.
\end{align*}
Here, $C_{l}$ or $C_{r}$ is the left or right child of $D^{(j)}$ in the updated $D$-tree.

$(4)$ Update the $D$-tree by using $Q^{(j)}(t_{j+1},t_{k+1})$ to replace $D^{(j)}$.

$(5)$ Set $j=j-1$.
If $j>1$, go to step $(3)$.
If $j=1$, set
$$Q^{(1)}(t_{1})=\int_{t_{k+1}=0}^{t_{1}}U_{1}Q^{(2)}(t_{2},t_{k+1})dt_{k+1},$$
and stop.
\end{algorithm}
Consequently, we arrive at the following representation.
\begin{proposition}\label{prop:from d-tree to duhamel integration}
There holds that
\begin{align}
\int_{T_{C}(\mu)}J_{\mu}^{(k+1)}(\phi^{\otimes (k+1)})
(t_{1},\underline{t}_{k+1})d\underline{t}_{k+1}=Q^{(1)}(t_{1}).
\end{align}
\end{proposition}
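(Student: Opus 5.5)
The plan is to combine Proposition \ref{prop:from d-tree to duhamel expansion} with the remark $T_{C}(\mu)=T_{D}(\mu)$, and then perform the time integrations one node at a time, bottom-up in the $D$-tree. By Proposition \ref{prop:from d-tree to duhamel expansion}, $J_{\mu}^{(k+1)}(\phi^{\otimes(k+1)})=D^{(1)}$. Here $D^{(1)}$ is a nested expression: each $D^{(j)}$ depends on $t_{j}$ and on the variables $t_{l}$ of its offspring $D^{(l)}$, and on nothing else. The domain $T_{C}(\mu)$ is the set of constraints $t_{i}\geq t_{j}$ for the edges $D^{(j)}\to D^{(i)}$. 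For each $j\geq 2$ the only constraint bounding $t_{j}$ from above is $t_{j}\le t_{i}$, where $D^{(i)}$ is the parent of $D^{(j)}$. The constraints bounding $t_{j}$ from below are $t_{j}\geq t_{l}$ for its children $l$.

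The key step is a Fubini reordering, exactly as in Example \ref{example:duhamel integral with the compatible time domain}. I would first note that $k+1$ is a leaf of the admissible tree, since no index exceeds it; hence $D^{(k+1)}$ has only $\phi$ children. I would then place the integral over $t_{k+1}\in[0,t_{1}]$ outermost and, for fixed $t_{k+1}$, rewrite the remaining constraints as follows.

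\begin{itemize}
\item \emph{Ancestors of $D^{(k+1)}$.} Let $j\neq k+1$ be such that $D^{(k+1)}$ is an offspring of $D^{(j)}$. The chain $t_{j}\geq\dots\geq t_{k+1}$ forces $t_{j}\geq t_{k+1}$. All other lower bounds on $t_{j}$ come from subtrees not containing $k+1$.
\item \emph{All other nodes.} Every other $t_{j}$ has lower bound $0$.
\end{itemize}

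Integrating in order $t_{2},t_{3},\dots$ along the tree with upper limit $t_{\text{parent}}$ therefore describes $T_{C}(\mu)\cap[0,t_{1}]^{k}$ exactly. For ancestors of $D^{(k+1)}$ the lower limit is $t_{k+1}$ (Case 1); otherwise it is $0$ (Case 2). The redundant lower constraints $t_{j}\geq t_{l}$ are automatically encoded by the upper limits of the children's integrals.

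Next I would verify by downward induction on $j$, from $j=k+1$ to $j=2$, that the partial integral of $D^{(j)}$ over all variables of its subtree equals $Q^{(j)}(t_{i},t_{k+1})$. The subtree variables are $t_{j}$ together with the $t_{l}$ for $D^{(l)}$ an offspring of $D^{(j)}$, excluding $t_{k+1}$. The point is that the integrand factorizes along the tree. $D^{(j)}=U_{-j}B(U_{j}C_{l},U_{j}C_{r})$ is bilinear, and the variable sets of the left and right subtrees are disjoint. So the integrals over the left-subtree variables can be moved inside the first argument of $B$, and those over the right-subtree variables inside the second argument. Since $B$ involves complex conjugation, I would note that it is only real-bilinear; because the integrals are real time integrals, they still commute with conjugation. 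The induction hypothesis then replaces each child $D^{(l)}$ by $Q^{(l)}(t_{j},t_{k+1})$, and the $\phi$ children stay as they are (with the convention $U_{-k-1}\phi$). The remaining integral over $t_{j}$ has limits $[t_{k+1},t_{i}]$ or $[0,t_{i}]$ according to the case split, which is exactly the definition of $Q^{(j)}$. At the root, $D^{(1)}=U_{1}D^{(2)}$, and the final outer integral over $t_{k+1}$ gives $Q^{(1)}(t_{1})$.

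I expect the main obstacle to be bookkeeping rather than analysis. One must check that the Fubini rearrangement produces precisely the Case 1/Case 2 limits. This uses that the set of ancestors of $D^{(k+1)}$ forms a single root-to-leaf path. It also uses that the subtree variable sets are disjoint, which lets the integrals pass through the bilinear form. The algorithm's small typographical slips need to be read in the intended way: $D^{(l)}$ in Case 2 should be $D^{(j)}$, and $t_{j+1}$ in step (4) should be the parent time $t_{i}$. All integrands are locally integrable in time for $\phi\in C([0,T];H^{s_{c}})$ at the level of this formal identity, so interchanging integration with the bilinear operations is justified.
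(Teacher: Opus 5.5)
Your proposal is correct and follows essentially the same route as the paper. The paper's proof is the one-line appeal to Algorithm~\ref{algorithm:from d-tree to duhamel integration}, whose content is exactly the Fubini step of Example~\ref{example:duhamel integral with the compatible time domain}: pull $t_{k+1}$ outermost, give every ancestor of $D^{(k+1)}$ the lower limit $t_{k+1}$ and every other node the lower limit $0$, and push each subtree's integrals into the corresponding argument of $B$. Your downward induction, together with your corrected reading of Case~2 and step~(4), spells this out.

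One caveat: your closing claim that all fixed-time integrands are locally integrable for $\phi\in H^{s_{c}}$ is an overstatement. At critical regularity the fully expanded product need not make sense pointwise in time; only the nested, time-integrated form is controlled, via the $U$-$V$ estimates. So the identity has to be read formally, or for smooth $\phi$ followed by a limit, which is also how the paper tacitly treats it.
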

\begin{proof}
It follows from Algorithm $\ref{algorithm:from d-tree to duhamel integration}$.
\end{proof}

\section{Iterative Estimates for the Hierarchy}\label{sec:Iterative Estimates}
In this section, we establish iterative estimates for the hierarchy over the compatible time integration domain. These estimates rely on the $U$-$V$ bilinear estimates. As a preliminary step, we introduce a marking algorithm on the $D$-tree, which will subsequently guide the application of the bilinear estimates.
\begin{algorithm}[Marked $D$-Tree] \label{algorithm:marked d-tree}
~\\
\hspace*{1em}$(1)$ We put a subscript $R$ at $D^{(k+1)}$, writing $D_{R}^{(k+1)}$. Here, we use the subscript $R$ to denote the roughest term.

$(2)$ Set counter $l=k-1$. If $D^{(k+1)}$ is an offspring (see Definition \ref{definition:compatible time integration domain}) of $D^{(l)}$, we place a subscript $R$ on $D^{(l)}$, i.e. $D_{R}^{(l)}$. Moreover, if one of the children of $D^{(l)}$ is $\phi$, we place a subscript $\phi$ on $D^{(l)}$, yielding either $D_{\phi}^{(l)}$ or $D_{\phi,R}^{(l)}$. This procedure gives rise to four possible markings, as follows.

Case 1. $D^{(l)}$: when $\phi$ is not a child of $D^{(l)}$, and $D^{(k+1)}$ is not an offspring of $D^{(l)}$.

Case 2. $D_{R}^{(l)}$: when $\phi$ is not a child of $D^{(l)}$, and $D^{(k+1)}$ is an offspring of $D^{(l)}$.

Case 3. $D_{\phi}^{(l)}$: when $\phi$ is a child of $D^{(l)}$, and $D^{(k+1)}$ is not an offspring of $D^{(l)}$.

Case 4. $D_{\phi,R}^{(l)}$: when $\phi$ is a child of $D^{(l)}$, and $D^{(k+1)}$ is an offspring of $D^{(l)}$.

$(3)$ Set $l=l-1$. If $l=0$, then stop, otherwise go to step $(2)$.
\end{algorithm}
\begin{remark}
Since there are $(k+1)$ copies of $\phi$, the number of nodes of the form $D_{\phi,R}^{(l)}$ and $D_{\phi}^{(l)}$ is at least $\frac{k}{2}$.
\end{remark}

We illustrate the iterative estimate part using the Duhamel expansion from Example \ref{example:duhamel integral with the compatible time domain} over its compatible time integration domain.
\begin{example} \label{example:estimate part}
Applying Algorithm \ref{algorithm:marked d-tree} to the $D$-tree in Fig. \ref{figure:duhamel tree} yields the marked tree shown in Fig. \ref{figure:marked duhamel tree}.
\begin{figure}[H]
\begin{tikzpicture}
\node{$D^{(1)}$}[sibling distance=80pt,level distance=1cm]
child{node{$D_{R}^{(2)}$}
child{node{$D_{\phi}^{(3)}$}[sibling distance=40pt,level distance=1cm]
child{node{$D_{\phi}^{(4)}$}
child{node{$\phi$}}
child{node{$\phi$}}}
child{node{$\phi$}}}
child{node{$D_{\phi,R}^{(5)}$}[sibling distance=40pt,level distance=1cm]
child{node{$\phi$}}
child{node{$D_{R}^{(6)}$}[sibling distance=40pt,level distance=1cm]
child{node{$\phi$}}
child{node{$\phi$}}
}}
}
;
\end{tikzpicture}
\caption{Marked Duhamel Tree}
\label{figure:marked duhamel tree}
\end{figure}
Next, we apply Algorithm \ref{algorithm:from d-tree to duhamel integration} to obtain
\begin{align*}
Q^{(1)}(t_{1})=&\int_{t_{6}=0}^{t_{1}}U_{1}Q^{(2)}(t_{1},t_{6})dt_{6},\\
Q^{(2)}(t_{1},t_{6})=&\int_{t_{2}=t_{6}}^{t_{1}}U_{-2}B(U_{2}Q^{(3)},U_{2}Q^{(5)})dt_{2},\\
Q^{(3)}(t_{2},t_{6})=&\int_{t_{3}=0}^{t_{2}}U_{-3}B(U_{3}Q^{(4)},U_{3,6}\phi)dt_{3},\\
Q^{(4)}(t_{3},t_{6})=&\int_{t_{4}=0}^{t_{3}}U_{-4}B(U_{4,6}\phi,U_{4,6}\phi)dt_{4},\\
Q^{(5)}(t_{2},t_{6})=&\int_{t_{5}=t_{6}}^{t_{2}}U_{-5}B(U_{5,6}\phi,U_{5}D^{(6)})dt_{5},\\
Q^{(6)}(t_{6})=&U_{-6}B(\phi,\phi).
\end{align*}
Using the Minkowski inequality and estimate \eqref{equ:u-v and sobolev}, we obtain
\begin{align*}
\n{\lra{\nabla_{x_{1}}}^{
\frac{d-8}{2}}Q^{(1)}(t_{1})}
_{L_{T}^{\wq}L_{x_{1}}^{2}}
=&\bbn{\int_{t_{6}=0}^{t_{1}}\lra{\nabla_{x_{1}}}^{
\frac{d-8}{2}}U_{1}Q^{(2)}(t_{1},t_{6})dt_{6}}_{L_{T}^{\wq}L_{x_{1}}^{2}}\\
\leq&\int_{0}^{T}\bbn{\lra{\nabla_{x_{1}}}^{
\frac{d-8}{2}}U_{1}Q^{(2)}(t_{1},t_{6})}_{L_{t_{1}}^{\wq}L_{x_{1}}^{2}}dt_{6}\\
\leq&\int_{0}^{T}\bbn{U_{1}Q^{(2)}(t_{1},t_{6})}_{X^{\frac{d-8}{2}}}dt_{6}.
\end{align*}
Since $D_{R}^{(2)}$ carries the subscript $R$, we apply the $U$-$V$ bilinear estimate \eqref{equ:bilinear estimate,high,dual argument} with $s=\frac{d-8}{2}$ to get
\begin{align*}
\bbn{U_{1}Q^{(2)}(t_{1},t_{6})}_{X^{\frac{d-8}{2}}} \leq& \bbn{U_{1}\int_{t_{2}=t_{6}}^{t_{1}}U_{-2}B(U_{2}Q^{(3)},U_{2}Q^{(5)})dt_{2}}_{X^{\frac{d-8}{2}}}\\
\leq&C\n{U_{2}Q^{(3)}}_{X^{\frac{d-4}{2}}}\n{U_{2}Q^{(5)}}_{X^{\frac{d-8}{2}}}.
\end{align*}

For the $\phi$-marked nodes $D_{\phi}^{(3)}$ and $D_{\phi}^{(4)}$, we apply the low-frequency estimate \eqref{equ:bilinear estimate,low,dual argument} with $s=\frac{d-4}{2}$. This yields
\begin{align*}
\bbn{U_{2}Q^{(3)}}_{X^{\frac{d-4}{2}}}
\leq&C\n{U_{3}Q^{(4)}}_{X^{\frac{d-4}{2}}}\lrs{T^{\frac{1}{6}}M_{0}^{\frac{1}{3}}\n{P_{\leq M_{0}}\phi}_{H^{\frac{d-4}{2}}}+\n{P_{>M_{0}}\phi}_{H^{\frac{d-4}{2}}}},\\
\bbn{U_{3}Q^{(4)}}_{X^{\frac{d-4}{2}}}
\leq&C\n{U_{3,6}\phi}_{X^{\frac{d-4}{2}}}\lrs{T^{\frac{1}{6}}M_{0}^{\frac{1}{3}}\n{P_{\leq M_{0}}\phi}_{H^{\frac{d-4}{2}}}+\n{P_{>M_{0}}\phi}_{H^{\frac{d-4}{2}}}}.
\end{align*}

The node $D_{\phi,R}^{(5)}$ carries both markings, so we use the low-frequency estimate \eqref{equ:bilinear estimate,low,dual argument} with $s=\frac{d-8}{2}$ to obtain
\begin{align*}
\n{U_{2}Q^{(5)}}_{X^{\frac{d-8}{2}}}\leq C
\n{U_{5}Q^{(6)}}_{X^{\frac{d-8}{2}}}\lrs{T^{\frac{1}{6}}M_{0}^{\frac{1}{3}}\n{P_{\leq M_{0}}\phi}_{H^{\frac{d-4}{2}}}+\n{P_{>M_{0}}\phi}_{H^{\frac{d-4}{2}}}}.
\end{align*}

Finally, for the roughest term $Q^{(6)}$, we use Sobolev inequality to get
\begin{align*}
\n{U_{5}Q^{(6)}}_{X^{\frac{d-8}{2}}}\leq \n{B(\phi,\phi)}_{H^{\frac{d-8}{2}}}\leq C\n{\phi}_{H^{\frac{d-4}{2}}}^{2}.
\end{align*}

Collecting these above estimates, we arrive at
\begin{align*}
&\n{\lra{\nabla_{x_{1}}}^{
\frac{d-8}{2}}Q^{(1)}(t_{1})}
_{L_{T}^{\wq}L_{x_{1}}^{2}}\\
\leq & C^{5}\int_{0}^{T}\n{\phi}_{H^{\frac{d-4}{2}}}^{3}\lrs{T^{\frac{1}{6}}M_{0}^{\frac{1}{3}}\n{P_{\leq M_{0}}\phi}_{H^{\frac{d-4}{2}}}+\n{P_{>M_{0}}\phi}_{H^{\frac{d-4}{2}}}}^{3}dt_{6}.
\end{align*}
\end{example}

Building on the above example, we now establish iterative estimates for the general case.
\begin{proposition}\label{lemma:estimate for the compatible time integration domain}
Let $\Phi^{(k)}(t)=\int \phi^{\otimes k}d\nu_{t}(\phi)$,
where $\nu_{t}(\phi)=\delta_{\phi_{1}(t)}(\phi)-\delta_{\phi_{2}(t)}(\phi)$.
 Then we have
\begin{align}\label{equ:estimate for the compatible time integration domain}
&\bbn{\lra{\nabla_{x_{1}}}^{\frac{d-8}{2}}\int_{T_{C}(\mu)}J_{\mu}^{(k+1)}
(\Phi^{(k+1)})
(t_{1},\underline{t}_{k+1})d\underline{t}_{k+1}}
_{L_{T}^{\wq}L_{x_{1}}^{2}}\\
\leq &C^{k}\int_{0}^{T}\int \n{\phi}_{H^{\frac{d-4}{2}}}^{\frac{k}{2}+1}\lrs{T^{\frac{1}{6}}M_{0}^{\frac{1}{3}}\n{P_{\leq M_{0}}\phi}_{H^{\frac{d-4}{2}}}+\n{P_{>M_{0}}\phi}_{H^{\frac{d-4}{2}}}}^{\frac{k}{2}}
d|\nu_{t_{k+1}}|(\phi)dt_{k+1}.\notag
\end{align}
\end{proposition}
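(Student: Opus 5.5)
The estimate is a generalisation of Example \ref{example:estimate part}. I will carry out that computation for an arbitrary collapsing map $\mu$, inducting along the marked $D$-tree.

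\textbf{Reduction to a single product.} Since $J_{\mu}^{(k+1)}$ is linear and $\Phi^{(k+1)}(t_{k+1})=\int\phi^{\otimes(k+1)}d\nu_{t_{k+1}}(\phi)$, I write $\int_{T_C(\mu)}J_\mu^{(k+1)}(\Phi^{(k+1)})$ as an integral of $J_\mu^{(k+1)}(\phi^{\otimes(k+1)})$ against $d\nu_{t_{k+1}}(\phi)$. The measure depends only on $t_{k+1}$, and by Algorithm \ref{algorithm:from d-tree to duhamel integration} the variable $t_{k+1}$ is the outermost integration variable. So Proposition \ref{prop:from d-tree to duhamel integration} applies inside the $\nu$-integral. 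It expresses the quantity as $\int_0^{t_1}\int U_1Q^{(2)}(t_1,t_{k+1})\,d\nu_{t_{k+1}}(\phi)\,dt_{k+1}$, where the $Q^{(j)}$ are built from $\phi$.

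\textbf{First step.} By Minkowski's inequality, and then the embedding $X^{s}\hookrightarrow L^\infty_t H^s$ (estimate \eqref{equ:u-v and sobolev}), the left side is bounded by
\[
\int_0^T\int \bigl\|U_1Q^{(2)}(t_1,t_{k+1})\bigr\|_{X^{\frac{d-8}{2}}}\,d|\nu_{t_{k+1}}|(\phi)\,dt_{k+1}.
\]
What remains is a pointwise-in-$(t_{k+1},\phi)$ bound
\[
\bigl\|U_1Q^{(2)}\bigr\|_{X^{\frac{d-8}{2}}}\le C^k\|\phi\|_{H^{s_c}}^{\frac{k}{2}+1}A^{\frac{k}{2}},
\qquad A=T^{\frac16}M_0^{\frac13}\|P_{\le M_0}\phi\|_{H^{s_c}}+\|P_{>M_0}\phi\|_{H^{s_c}}.
\]

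\textbf{Induction on the marked tree.} I prove by backward induction over the nodes of the marked $D$-tree (Algorithm \ref{algorithm:marked d-tree}) the following bound. For each node $D^{(j)}$ with parent $D^{(i)}$,
\[
\|U_iQ^{(j)}\|_{X^{s_j}}\le C^{n_j}\|\phi\|_{H^{s_c}}^{a_j}A^{b_j},
\]
with $s_j=\frac{d-8}{2}$ if $D^{(j)}$ carries the mark $R$ and $s_j=s_c$ otherwise. Here $n_j$ counts the $D$-nodes below $D^{(j)}$, and the exponents $a_j,b_j$ are tracked additively. For a $\phi$-leaf I use $\|U_{i,k+1}\phi\|_{X^{s}}\lesssim\|\phi\|_{H^{s}}$. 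At each node, $Q^{(j)}$ is a truncated Duhamel integral $\int_{t_0}^{t_i}U_{-j}B(U_jC_l,U_jC_r)dt_j$ with $t_0\in\{0,t_{k+1}\}$, so it has exactly the form covered by \eqref{equ:bilinear estimate,high,dual argument}--\eqref{equ:bilinear estimate,low,dual argument}. This is where compatibility of $T_C(\mu)$, Definition \ref{definition:compatible time integration domain}, is used. Since $B$ involves conjugates, I need the estimates for $\widetilde u$ ranging over $u$ or $\bar u$; I take this to be the content of the tilde notation. The rule at each node is:
\begin{itemize}
\item[] \emph{Unmarked node, or $R$-marked node without $\phi$.} Apply \eqref{equ:bilinear estimate,high,dual argument}, placing $X^{s_j}$ on the $R$-child (or on either child if there is no $R$-mark) and $X^{s_c}$ on the other.
\item[] \emph{Node with $\phi$-mark.} Apply \eqref{equ:bilinear estimate,low,dual argument}, putting the $\phi$ child in the second slot; this produces one factor $A$.
\end{itemize}
The base case $D_R^{(k+1)}$ follows from the Sobolev product bound $\|B(\phi,\phi)\|_{H^{\frac{d-8}{2}}}\lesssim\|\phi\|_{H^{s_c}}^2$, valid for $d\ge6$.

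\textbf{Counting.} There are $k+1$ copies of $\phi$. By the Remark after Algorithm \ref{algorithm:marked d-tree}, at least $k/2$ nodes carry a $\phi$-mark. Each of these contributes one factor $A$, consuming one copy of $\phi$. The remaining copies are bounded by $\|\phi\|_{H^{s_c}}$. Using $A\lesssim\|\phi\|_{H^{s_c}}$ to discard any surplus factors of $A$, I obtain exactly the exponents $\frac k2+1$ and $\frac k2$, and one constant $C$ per $D$-node gives $C^k$.

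\textbf{Main obstacle.} The delicate point is consistency of regularities along the $R$-path. The high estimate at an $R$-node requires its non-$R$ child to be in $X^{s_c}$. That holds because a non-$R$ subtree contains no $D^{(k+1)}$, and therefore by induction it is estimated at full regularity $s_c$ via the low/high estimates with $s=s_c$. Checking that $R$-marked nodes are exactly the ancestors of $D^{(k+1)}$, so that only one rough branch is ever present, is the structural fact I would verify carefully from Algorithm \ref{algorithm:marked d-tree}.
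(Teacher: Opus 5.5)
Your proposal is correct and follows the paper's proof essentially step for step. The shared steps are: the reduction to a single product $\phi^{\otimes(k+1)}$ with $t_{k+1}$ outermost; Minkowski plus $X^{s}\hookrightarrow L^\infty_tH^s_x$; node-by-node use of the high/low bilinear estimates as dictated by the $R$/$\phi$ marking, with non-$R$ subtrees kept at regularity $s_c$; and the closing Sobolev bound $\|B(\phi,\phi)\|_{H^{\frac{d-8}{2}}}\lesssim\|\phi\|_{H^{s_c}}^2$.

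One bookkeeping caveat, inherited from the paper's Remark, concerns the count of $\phi$-marked nodes. Excluding $D^{(k+1)}$, one is only guaranteed $\lfloor k/2\rfloor$ such nodes; for example, $k=3$, $\mu(3)=1$, $\mu(4)=2$ yields only $D^{(3)}_{\phi}$. Also, surplus factors of $A$ are better avoided by applying the high estimate at those nodes, rather than through $A\lesssim\|\phi\|_{H^{s_c}}$, which needs $T^{\frac16}M_0^{\frac13}\lesssim 1$. Neither point affects the geometric decay used in the main theorem.
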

\begin{proof}
By Proposition $\ref{prop:from d-tree to duhamel integration}$, we rewrite
\begin{align*}
I:=&\bbn{\lra{\nabla_{x_{1}}}^{\frac{d-8}{2}}\int_{T_{C}(\mu)}J_{\mu}^{(k+1)}
(\Phi^{(k+1)})
(t_{1},\underline{t}_{k+1})d\underline{t}_{k+1}}
_{L_{T}^{\wq}L_{x_{1}}^{2}}\\
=&\bbn{\lra{\nabla_{x_{1}}}^{\frac{d-8}{2}}\int_{T_{C}(\mu)}\int J_{\mu}^{(k+1)}
(\phi^{\otimes (k+1)})
(t_{1},\underline{t}_{k+1})d\nu_{t_{k+1}}(\phi)d\underline{t}_{k+1}}
_{L_{T}^{\wq}L_{x_{1}}^{2}}\\
=&\bbn{\lra{\nabla_{x_{1}}}^{\frac{d-8}{2}} \int_{0}^{t_{1}}\int U_{1}Q^{(2)}(t_{1},t_{k+1})d\nu_{t_{k+1}}(\phi)dt_{k+1}}
_{L_{T}^{\wq}L_{x_{1}}^{2}}.
\end{align*}
Applying Minkowski's inequality together with estimate \eqref{equ:u-v and sobolev}, we obtain
\begin{align*}
I\leq& \int_{0}^{T}\int \n{U_{1}Q^{(2)}}_{X^{\frac{d-8}{2}}}d|\nu_{t_{k+1}}| (\phi)dt_{k+1}.
\end{align*}
Thus, it suffices to bound $\n{U_{1}Q^{(2)}}_{X^{\frac{d-8}{2}}}$. We achieve this by the following algorithm.
\begin{algorithm}[Iterative Estimates]
~\\
\hspace*{1em}$(1)$ Set counter $j=2$.

$(2)$ Given $j$, there exists a unique index $i$ such that $Q^{(j)}\to Q^{(i)}$. We need to estimate the two quantities
\begin{align}
\bbn{\int_{a}^{t_{i}}U_{i,j}B\lrs{U_{j}C_{l},U_{j}C_{r}
}dt_{j}}_{X^{\frac{d-8}{2}}},\\
\bbn{\int_{a}^{t_{i}}U_{i,j}B\lrs{U_{j}C_{l},U_{j}C_{r}
}dt_{j}}_{X^{\frac{d-4}{2}}},
\end{align}
where $C_{r}$ and $C_{l}$ are the left and right child of $Q^{(j)}$ and $a\in \lr{0,t_{k+1}}$.

For each node $j$, the marking on $D^{(j)}$ leads to four possible treatments.

Case $1$. $D^{(j)}$. We apply estimate \eqref{equ:bilinear estimate,high,dual argument} with $s=\frac{d-4}{2}$.

Case $2$. $D_{R}^{(j)}$. We apply estimate \eqref{equ:bilinear estimate,high,dual argument} with $s=\frac{d-8}{2}$, and place the $R$-carrying child in $X^{\frac{d-8}{2}}$ norm.

Case $3$. $D_{\phi}^{(j)}$. We apply estimate \eqref{equ:bilinear estimate,low,dual argument} with $s=\frac{d-4}{2}$ and replace the resulting $\n{U\phi}_{X^{\frac{d-4}{2}}}$ by $\n{\phi}_{H^{\frac{d-4}{2}}}$.

Case $4$. $D_{\phi,R}^{(j)}$.
We apply estimate \eqref{equ:bilinear estimate,low,dual argument} with $s=\frac{d-8}{2}$, place $R$-carrying child in $X^{\frac{d-8}{2}}$ norm, and replace the resulting $\n{U\phi}_{X^{\frac{d-4}{2}}}$ by $\n{\phi}_{H^{\frac{d-4}{2}}}$.

$(3)$ Set counter $j=j+1$. If $j<k$, go to step $(2)$, otherwise go to step $(4)$.

$(4)$ We are now at the $k$-th coupling and would have applied \eqref{equ:bilinear estimate,low,dual argument} at least $\frac{k}{2}$ times, so we arrive at
\begin{align*}
&\bbn{\lra{\nabla_{x_{1}}}^{\frac{d-8}{2}}\int_{T_{C}}
J_{\mu}^{(k+1)}(\Phi^{(k+1)})
(t_{1},\underline{t}_{k+1})d\underline{t}_{k+1}}
_{L_{T}^{\wq}L_{x_{1}}^{2}}\\
\leq &C^{k-1}\int_{0}^{T}\int\n{\phi}_{H^{\frac{d-4}{2}}}^{\frac{k}{2}-1}\lrs{T^{\frac{1}{6}}M_{0}^{\frac{1}{3}}\n{P_{\leq M_{0}}\phi}_{H^{\frac{d-4}{2}}}+\n{P_{>M_{0}}\phi}_{H^{\frac{d-4}{2}}}}^{\frac{k}{2}}\\
&\quad \quad \quad \quad\quad \n{B(\phi,\phi)}_{H^{\frac{d-8}{2}}}d|\nu_{t_{k+1}}|(\phi)dt_{k+1}
\end{align*}
Finally, applying Sobolev inequality to $\n{B(\phi,\phi)}_{H^{\frac{d-8}{2}}}$, we arrive at
\begin{align*}
\leq C^{k}\int_{0}^{T}\int \n{\phi}_{H^{\frac{d-4}{2}}}^{\frac{k}{2}+1}\lrs{T^{\frac{1}{6}}M_{0}^{\frac{1}{3}}\n{P_{\leq M_{0}}\phi}_{H^{\frac{d-4}{2}}}+\n{P_{>M_{0}}\phi}_{H^{\frac{d-4}{2}}}}^{\frac{k}{2}}
d|\nu_{t_{k+1}}|(\phi)dt_{k+1}.
\end{align*}
\end{algorithm}
Hence, we complete the proof.
\end{proof}

\section{$U$-$V$ Bilinear Estimates}\label{sec:Bilinear Estimates}
We now recall the definition of $U_{t}^{p}$ and $V_{t}^{p}$ as in the standard reference \cite{KTV14}.
 Here, $V_{t}^{p}$ is the space of functions of bounded $p$-variation of Wiener, and
the atomic $U_{t}^{p}$ space is introduced by Koch and Tataru \cite{KT05,KT07}.
The spaces $X^{s}([0,T))$ and $Y^{s}([0,T))$ are defined as the sets of functions
 $u:[0,T)\mapsto H^{s}(\T^{d})$ such that for every $\xi\in \Z^{d}$ the map $t\mapsto \widehat{e^{-it\Delta}u(t)}(\xi)$ is in $U^{2}([0,T);\C)$ and $V_{rc}^{2}([0,T);\C)$, respectively. Their norms are given by
\begin{align*}
\n{u}_{X^{s}([0,T))}:=\lrs{\sum_{\xi\in \Z^{d}}\lra{\xi}^{2s}
\n{\widehat{e^{-it\Delta}u(t)}(\xi)}_{U^{2}}^{2}}^{1/2},\\
\n{u}_{Y^{s}([0,T))}:=\lrs{\sum_{\xi\in \Z^{d}}\lra{\xi}^{2s}
\n{\widehat{e^{-it\Delta}u(t)}(\xi)}_{V^{2}}^{2}}^{1/2}.
\end{align*}
For further details, we refer to \cite{HTT11,HTT14,IP12,KV16}. The main properties are summarised in the following lemma.
\begin{lemma}[{\hspace{-0.01em}\cite[Propositions 2.8--2.11]{HTT11} }]\label{lemma:U-V estimate,dual argument}
There holds that
\begin{align}
&\n{u}_{L_{t}^{\wq}H_{x}^{s}}\lesssim \n{u}_{Y^{s}}\lesssim \n{u}_{X^{s}},\label{equ:u-v and sobolev}\\
&\n{e^{it\Delta}f}_{Y^{s}}\leq  \n{e^{it\Delta}f}_{X^{s}}\leq\n{f}_{H^{s}}, \label{equ:u-v and sobolev, special case}
\end{align}
 For $f\in L^{1}(0,T;H^{s}(\T^{d}))$, we have
\begin{align}
&\bbn{\int_{t_{0}}^{t}e^{i(t-\tau)\Delta}f(\tau,\cdot)d\tau}_{X^{s}([0,T))}
&\leq \sup_{g\in Y^{-s}([0,T)):\n{g}_{Y^{-s}}=1}\bbabs{\int_{0}^{T}\int_{\T^{d}}
f(t,x)\ol{g(t,x)}dtdx},
\end{align}
for all $t_{0}\in[0,T)$.
\end{lemma}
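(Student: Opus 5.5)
This lemma collects standard facts from \cite{HTT11}. The plan is to reduce each claim, frequency by frequency, to the scalar spaces $U^{2}([0,T);\C)$ and $V^{2}_{rc}([0,T);\C)$, and then to resum in $\xi$ using Plancherel and $\ell^{2}$ duality. Throughout, write $h_{u}(t,\xi):=\widehat{e^{-it\Delta}u(t)}(\xi)$ for the interaction-picture Fourier coefficients. Since $e^{-it\Delta}$ only multiplies $\hat u(t,\xi)$ by a unimodular factor, $|h_{u}(t,\xi)|=|\hat u(t,\xi)|$.

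For \eqref{equ:u-v and sobolev} I would use the scalar embeddings $U^{2}\hookrightarrow V^{2}_{rc}\hookrightarrow L^{\infty}$, with $\n{v}_{L^{\infty}}\lesssim \n{v}_{V^{2}}\lesssim \n{v}_{U^{2}}$.
\begin{itemize}
\item The second inequality is immediate on $U^{2}$-atoms: any partition sum $\sum|v(t_{i})-v(t_{i-1})|^{2}$ of a step function $\sum\phi_{k}\mathbf 1_{[t_{k-1},t_{k})}$ is bounded by a constant times $\sum|\phi_{k}|^{2}\le 1$. It then extends to the atomic hull.
\item The first inequality uses the $V^{2}_{rc}$ normalization (limit $0$ at the left endpoint). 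Any value $v(t)$ is controlled by a two-point variation sum.
\end{itemize}
Then, for each fixed $t$,
\begin{align*}
\n{u(t)}_{H^{s}}^{2}=\sum_{\xi}\lra{\xi}^{2s}|h_{u}(t,\xi)|^{2}\le \sum_{\xi}\lra{\xi}^{2s}\n{h_{u}(\cdot,\xi)}_{V^{2}}^{2}=\n{u}_{Y^{s}}^{2},
\end{align*}
and likewise $\n{u}_{Y^{s}}\lesssim\n{u}_{X^{s}}$. For \eqref{equ:u-v and sobolev, special case}, note that $h(t,\xi)=\hat f(\xi)$ is constant in $t$. It is therefore $|\hat f(\xi)|$ times a single one-step $U^{2}$-atom, so its $U^{2}$ norm is at most $|\hat f(\xi)|$; summing gives $\n{e^{it\Delta}f}_{X^{s}}\le\n{f}_{H^{s}}$.

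The main step is the duality bound. Set $u(t)=\int_{t_{0}}^{t}e^{i(t-\tau)\Delta}f(\tau)d\tau$. Then $h_{u}(t,\xi)=\int_{t_{0}}^{t}a_{\xi}(\tau)d\tau$ with $a_{\xi}:=\widehat{e^{-i\tau\Delta}f(\tau)}(\xi)\in L^{1}_{t}$, so $h_{u}(\cdot,\xi)$ is absolutely continuous. I would invoke the scalar $U^{2}$--$V^{2}$ duality (\cite[Prop.~2.10]{HTT11}, going back to \cite{KT05}): for absolutely continuous $w$ vanishing at a point,
\begin{align*}
\n{w}_{U^{2}}=\sup_{\n{v}_{V^{2}}=1}\bbabs{\int_{0}^{T}w'(t)\ol{v(t)}dt}.
\end{align*}
Applying this with $w=h_{u}(\cdot,\xi)$ gives, for each $\xi$, a near-extremizer $v_{\xi}$ with $\n{v_{\xi}}_{V^{2}}=1$. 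The starting point $t_{0}>0$ needs separate treatment. I would multiply $a_{\xi}$ by $\mathbf 1_{[t_{0},T)}$ so that $w$ vanishes on $[0,t_{0})$; for $t<t_{0}$, the difference is a constant profile, handled by the trivial atom bound. This is where I expect the fiddly details, namely the $rc$ normalization and the endpoint conventions, at the cost of at most a harmless constant.

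Next I resum using $\ell^{2}$ duality. Choose $c_{\xi}\ge0$ with $\sum_{\xi}c_{\xi}^{2}=1$ realizing
\begin{align*}
\Big(\sum_{\xi}\lra{\xi}^{2s}\n{h_{u}(\cdot,\xi)}_{U^{2}}^{2}\Big)^{1/2}=\sum_{\xi}c_{\xi}\lra{\xi}^{s}\n{h_{u}(\cdot,\xi)}_{U^{2}},
\end{align*}
and define $g$ through its interaction-picture coefficients $\widehat{e^{-it\Delta}g(t)}(\xi)=c_{\xi}\lra{\xi}^{s}v_{\xi}(t)$, adjusting phases so that each term is nonnegative. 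By construction $\n{g}_{Y^{-s}}=\n{c}_{\ell^{2}}=1$. Parseval in $x$, together with unitarity of $e^{-it\Delta}$, converts $\sum_{\xi}\int a_{\xi}\ol{c_{\xi}\lra{\xi}^{s}v_{\xi}}dt$ into $\int_{0}^{T}\int_{\T^{d}}f\ol{g}\,dx\,dt$. This yields the stated bound.
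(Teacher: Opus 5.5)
The paper gives no proof of this lemma; it quotes \cite{HTT11}, and your plan is essentially the argument given there. That plan is to work coefficient by coefficient with $h_u(t,\xi)$, use $U^2\hookrightarrow V^2_{rc}\hookrightarrow L^\infty$, and, for the Duhamel term, combine scalar $U^2$--$V^2$ duality with $\ell^2$ duality in $\xi$. The embeddings, the free-evolution bound and the duality bound for $t_0=0$ go through as you sketch them, with one routine repair. Choosing $c_\xi$ proportional to $\langle\xi\rangle^{s}\|h_u(\cdot,\xi)\|_{U^2}$ presupposes $\|u\|_{X^s}<\infty$. So run the argument over finite sets of frequencies and let them exhaust $\mathbb{Z}^d$. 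Since the right-hand side is $\lesssim\|f\|_{L^1H^s}$, this also gives $u\in X^s$.

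The step that fails as written is the case $t_0>0$.

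\textbf{The duality identity.} The scalar identity $\|w\|_{U^2}=\sup_{\|v\|_{V^2}=1}|\int_0^T w'\bar v\,dt|$ requires $w(0)=0$, i.e.\ vanishing at the left endpoint, or at $-\infty$ in the formulation on $\mathbb{R}$. Vanishing at an arbitrary point is not enough. For $t_0>0$ the value $h_u(0,\xi)=-\int_0^{t_0}a_\xi\,d\tau$ is generally nonzero, and the $U^2$ norm sees it while the pairing does not.

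\textbf{Your proposed repair.} After replacing $a_\xi$ by $\mathbf 1_{[t_0,T)}a_\xi$, the discrepancy on $[0,t_0)$ is $-\int_t^{t_0}a_\xi\,d\tau$. This depends on $t$, so it is not a constant profile.

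\textbf{The correct splitting.} Write $h_u(t,\xi)=w_\xi(t)-w_\xi(t_0)$ with $w_\xi(t)=\int_0^t a_\xi\,d\tau$. The remainder is now constant on $[0,T)$, i.e.\ $|w_\xi(t_0)|$ times one atom. Moreover $|w_\xi(t_0)|\le\sup_t|w_\xi(t)|\le\|w_\xi\|_{U^2}$, so $\|h_u(\cdot,\xi)\|_{U^2}\le 2\|w_\xi\|_{U^2}$. Your $t_0=0$ argument then gives the bound with constant $2$.

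\textbf{The constant is unavoidable.} Take $f$ supported on a single frequency, with $a_\xi\approx\delta_{s_1}-\delta_{s_2}$ and $s_1<t_0<s_2$. Then $h_u\approx-\mathbf 1_{[0,s_1)}-\mathbf 1_{[s_2,T)}$, whose $U^2$ norm is at least $\tfrac{4}{3}$. To see this, test it against the step function equal to $1,\tfrac{1}{3},\tfrac{2}{3}$ on $[0,s_1)$, $[s_1,s_2)$, $[s_2,T)$ and $0$ from $T$ on, which has $V^2$ norm $1$ in the conventions of \cite{HTT11}. On the other hand, $|\int a_\xi\bar v\,dt|\le\sup_{t,t'}|v(t)-v(t')|\le 1$ whenever $\|v\|_{V^2}\le 1$.

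So for $t_0>0$ the inequality can only hold with a constant. That is what your corrected argument proves, and it is all the bilinear and iterative estimates in the paper use.
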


To establish the scaling-critical $U$-$V$ bilinear estimates, we rely on the Strichartz estimates on the torus and
 a bilinear Strichartz estimate.
\begin{lemma}[Strichartz estimate on $\T^{d}$\cite{BD15,KV16}]\label{lemma:Strichartz estimate}
For $p>\frac{2(d+2)}{d},$
\begin{align}\label{equ:Strichartz estimate}
\n{P_{\leq M}u}_{L_{t,x}^{p}}
\lesssim M^{\frac{d}{2}-\frac{d+2}{p}}\n{P_{\leq M}u}_{Y^{0}([0,T))}.
\end{align}
Further, let $M$ be a dyadic value and let $Q$ be a noncentered $M$-cube
in Fourier space
$$Q=\lr{\xi_{0}+\eta:|\eta|<M}.$$
Let $P_{Q}$ be the corresponding Littlewood-Paley projection, then by the
Galilean invariance, we have
\begin{align}\label{equ:strichartz estimate with noncertered frequency localization}
\n{P_{Q}u}_{L_{t,x}^{p}}\lesssim M^{\frac{d}{2}-\frac{d+2}{p}}
\n{P_{Q}u}_{Y^{0}([0,T))}.
\end{align}
\end{lemma}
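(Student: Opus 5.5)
The plan is to reduce everything to a Strichartz estimate for free solutions with frequency support in a ball of radius $M$, then pass to the adapted function spaces through the atomic structure of $U^{p}$ and the embedding $V^{2}\hookrightarrow U^{p}$ for $p>2$. Throughout, $T$ is bounded by a fixed constant (say $T\leq T_{0}$, and we may take $T\le 1$ after splitting $[0,T)$ into boundedly many unit intervals), so all implicit constants are allowed to depend on $T_{0}$.

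\emph{Step 1 (free solutions).} We aim for
\[
\n{P_{\leq M}e^{it\Delta}f}_{L^{p}_{t,x}([0,1]\times\T^{d})}\lesssim M^{\frac{d}{2}-\frac{d+2}{p}}\n{f}_{L^{2}}
\]
for every $p>\frac{2(d+2)}{d}$. The $\ell^{2}$-decoupling theorem \cite{BD15}, applied to the paraboloid rescaled to frequency scale $M$, gives this bound at the endpoint $p=\frac{2(d+2)}{d}$, but with a loss $M^{\ve}$. The loss cannot be removed simply by interpolating with the Bernstein bound at $p=\infty$, since interpolation keeps the loss. Following \cite{KV16}, we will instead argue by level sets. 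We decompose $P_{\leq M}e^{it\Delta}f$ dyadically in amplitude. On the high-amplitude sets we exploit the strict gain available for $p$ above the endpoint: by a $TT^{*}$ argument with the Weyl-sum (major/minor arc) bound on the kernel of $P_{\leq M}e^{it\Delta}$, these sets have small measure. The low-amplitude part is handled with the $\ve$-lossy endpoint estimate. Summing the dyadic pieces converges precisely because $p>\frac{2(d+2)}{d}$. I expect this to be the main obstacle, and in the write-up I would rely on the argument of \cite{KV16}.

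\emph{Step 2 (atoms and $Y^{0}$).} Let $u=\sum_{k}\mathbf{1}_{[t_{k-1},t_{k})}e^{it\Delta}\phi_{k}$ be a $U^{p}_{\Delta}L^{2}$ atom, so that $\sum_{k}\n{\phi_{k}}_{L^{2}}^{p}=1$. Because the time intervals are disjoint,
\[
\n{P_{\leq M}u}_{L^{p}_{t,x}}^{p}=\sum_{k}\n{\mathbf{1}_{[t_{k-1},t_{k})}P_{\leq M}e^{it\Delta}\phi_{k}}_{L^{p}_{t,x}}^{p}\lesssim M^{p(\frac{d}{2}-\frac{d+2}{p})},
\]
by Step 1. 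This gives the estimate with $U^{p}_{\Delta}L^{2}$ on the right. Next, $Y^{0}$ embeds continuously into $V^{2}_{\Delta}L^{2}$, since the $V^{2}$ norm of the vector-valued profile is bounded by the $\ell^{2}$ sum of the $V^{2}$ norms of its Fourier coefficients. Moreover, $V^{2}_{\Delta}L^{2}\hookrightarrow U^{p}_{\Delta}L^{2}$ for $p>2$ \cite{HTT11}. Both embeddings commute with $P_{\leq M}$, which yields \eqref{equ:Strichartz estimate}.

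\emph{Step 3 (Galilean invariance).} For $Q=\{\xi_{0}+\eta:|\eta|<M\}$ with $\xi_{0}\in\Z^{d}$, we write $u(t,x)=e^{ix\cdot\xi_{0}-it|\xi_{0}|^{2}}v(t,x-2t\xi_{0})$. This transformation maps free solutions to free solutions and sends $P_{Q}u$ to $P_{\{|\eta|<M\}}v$. It preserves the $L^{p}_{t,x}$ norm, because it is a unimodular factor composed with a measure-preserving change of variables on $\T^{d}$. It also preserves the $Y^{0}$ norm: the profile coefficients of $u$ at $\xi$ equal those of $v$ at $\xi-\xi_{0}$ up to a $t$-independent unimodular constant, which leaves each $V^{2}$ norm unchanged. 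Applying \eqref{equ:Strichartz estimate} to $v$ then gives \eqref{equ:strichartz estimate with noncertered frequency localization}. If $\xi_{0}$ is not a lattice point, we enlarge $Q$ to a cube of comparable size centred at a nearby lattice point.
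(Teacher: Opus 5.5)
Your proposal is correct and follows the same route the paper takes by citation: the $\varepsilon$-lossy decoupling bound of \cite{BD15}, upgraded to the sharp estimate for $p>\frac{2(d+2)}{d}$ by the level-set argument of \cite{KV16}, then transferred to $Y^{0}$ through $U^{p}$ atoms and the embedding $V^{2}\hookrightarrow U^{p}$ for $p>2$, with the noncentered version obtained from the Galilean transformation as the paper indicates. Your explicit check that the Galilean map preserves the $L^{p}_{t,x}$ norm and each profile coefficient's $V^{2}$ norm, together with your reduction for non-lattice centres, supplies details the paper leaves implicit.
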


\begin{lemma}[{\hspace{-0.01em}\cite{HTT11,KV16}}]\label{lemma:bilinear,strchartz,L2}
For \(d \ge 3\), when \(N_1 \ge N_2\), there exists \(\delta > 0\) such that
\begin{align}\label{equ:bilinear estimate,Strichartz,L2}
\| \lrs{P_{N_1} u_{1}}  \lrs{P_{N_2} u_{2}} \|_{L_{t,x}^2([0,1] \times \mathbb{T}^d)}
\lesssim  N_2^{\frac{d-2}{2}} \left( \frac{N_2}{N_1} + \frac{1}{N_2} \right)^{\delta}
\| u_{1} \|_{Y^0} \| u_{2} \|_{Y^0}.
\end{align}
\end{lemma}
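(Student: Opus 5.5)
The plan is to prove \eqref{equ:bilinear estimate,Strichartz,L2} by combining the noncentered Strichartz estimate \eqref{equ:strichartz estimate with noncertered frequency localization} with an orthogonality decomposition of the high-frequency factor, following the approach of \cite{HTT11,KV16}. First, by the atomic structure of $U^2$ together with the embedding $V^2_{rc}\hookrightarrow U^p$ for $p>2$ (with the usual logarithmic interpolation), it suffices to prove two versions of the estimate. One is for $U^p$-atoms in $Y^0$ with some $p>2$, which loses nothing in the main power $N_2^{\frac{d-2}{2}}$. The other is for $U^2$ functions, and here a gain of a small power is available. Interpolating between the two reduces the problem to free solutions $u_j=e^{it\Delta}f_j$ with $\widehat{f_1}$ supported at frequency $\sim N_1$ and $\widehat{f_2}$ at $\sim N_2$. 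The passage from free solutions to $U^2$ atoms is routine by summing over the pieces of the atom.

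\emph{Step 1 (cube decomposition).} Partition the $N_1$-annulus into cubes $Q$ of side $N_2$ and write $P_{N_1}u_1=\sum_Q P_Qu_1$. The product $(P_Qu_1)(P_{N_2}u_2)$ has Fourier support in a fixed dilate of $Q$, and these dilates have bounded overlap. Almost orthogonality in $L^2_x$ therefore gives
\[
\n{(P_{N_1}u_1)(P_{N_2}u_2)}_{L^2_{t,x}}^2\lesssim \sum_Q \n{(P_Qu_1)(P_{N_2}u_2)}_{L^2_{t,x}}^2 .
\]

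\emph{Step 2 (Hölder and Strichartz).} For each $Q$, apply Hölder with $L^4_{t,x}\times L^4_{t,x}$, or with $L^{p}\times L^{q}$ where $p>\frac{2(d+2)}{d}$ if $d$ is small. Then use \eqref{equ:strichartz estimate with noncertered frequency localization} on $P_Qu_1$ with scale $N_2$, and \eqref{equ:Strichartz estimate} on $P_{N_2}u_2$. For $p=4$ the exponent is $\frac d2-\frac{d+2}{4}=\frac{d-2}{4}$, so each factor contributes $N_2^{\frac{d-2}{4}}$ and the product gives $N_2^{\frac{d-2}{2}}$. For $d\ge3$ we have $4>\frac{2(d+2)}{d}$, so $p=4$ is admissible. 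Summing over $Q$ with $\sum_Q\n{P_Qu_1}_{Y^0}^2\lesssim\n{u_1}_{Y^0}^2$ then yields the estimate with no gain, that is, with $\delta=0$.

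\emph{Step 3 (the gain $\delta$).} This is the main obstacle. I would first try a refined counting argument for free solutions on $[0,1]$. Decompose further into strips of width $M=\max(N_2^2/N_1,1)$ orthogonal to $\xi_0$, where $\xi_0$ is the centre of $Q$. Because the modulation $|\xi|^2$ separates these strips by at least about $N_1M$, time-frequency orthogonality on the unit interval applies. Counting lattice points in the resulting thin slabs, via the divisor bound and bounds for lattice points on spheres, then produces the factor $\left(\frac{N_2}{N_1}+\frac{1}{N_2}\right)^{\delta}$, with a small loss $N_2^{\varepsilon}$ absorbed by choosing $\delta$ small. Interpolating this gained $U^2$ estimate against the lossless $U^p$ estimate from Step 2, via the standard lemma of \cite{HTT11}, transfers the gain to $Y^0$ at the cost of shrinking $\delta$. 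The delicate points are the lattice-point count in thin slabs, and the fact that it must hold uniformly in $\xi_0$ and for $N_1$ possibly much larger than $N_2^2$.
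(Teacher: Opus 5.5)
Your architecture matches \cite{HTT11,KV16}; the paper gives no proof and only cites these. That architecture is: a lossless $U^4$ (hence $Y^0$) bound plus a $U^2$ bound with gain, glued by the logarithmic interpolation lemma; decomposition into $N_2$-cubes and then into strips $R_k$ of width $M=\max(N_2^2/N_1,1)$ orthogonal to $\xi_0$; and time-frequency almost orthogonality of the strip pieces. Steps 1--2 are correct and give the estimate with $\delta=0$. The gap is Step 3, which is the whole content of the lemma. You propose to obtain the gain from a count of lattice points on spheres of radius $\lesssim N_2$ lying in a slab of width $M$ orthogonal to $\xi_0$, uniformly in $\xi_0$. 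You do not prove this count, and the tools you name do not give it. The divisor bound and bounds for lattice points on spheres give $N_2^{d-2+\varepsilon}$ points per sphere, which is exactly the no-gain estimate again. Slicing the slab by lattice hyperplanes orthogonal to the lattice vector $\xi_0$ costs a factor comparable to the height of $\xi_0$: there can be up to $\sim MN_1$ such hyperplanes, which destroys the gain when $N_1\gg N_2$. A uniform sphere-in-slab bound of size $N_2^{d-3+\varepsilon}M$ is at best a delicate equidistribution statement. So, as written, the key step is missing.

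The missing idea is that no counting on spheres is needed; the thinness of $R_k$ enters through Bernstein. Since $M\ge 1$, you have $\#(R_k\cap\Z^d)\lesssim N_2^{d-1}M$ uniformly in the direction. Hence $\n{e^{it\Delta}P_{R_k}\phi}_{L^\infty_{t,x}}\lesssim (N_2^{d-1}M)^{1/2}\n{\phi}_{L^2}$. Meanwhile the noncentered scale-invariant Strichartz estimate on the cube containing $R_k$ controls the $L^p$ norm for any $p\in(\frac{2(d+2)}{d},4)$; this interval is nonempty exactly when $d\ge 3$. Interpolating,
\begin{align*}
\n{e^{it\Delta}P_{R_k}\phi}_{L^4_{t,x}}\le \n{e^{it\Delta}P_{R_k}\phi}_{L^p_{t,x}}^{\frac p4}\n{e^{it\Delta}P_{R_k}\phi}_{L^\infty_{t,x}}^{1-\frac p4}\lesssim N_2^{\frac{d-2}{4}}\lrs{\frac{M}{N_2}}^{\frac{4-p}{8}}\n{\phi}_{L^2},
\end{align*}
and $M/N_2\sim N_2/N_1+1/N_2$. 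Next, combine four ingredients:
\begin{enumerate}
\item H\"older together with the $L^4$ Strichartz bound for $P_{N_2}u_2$;
\item your almost orthogonality over $k$ and over $Q$;
\item the transfer of $L^2_{t,x}$ bilinear bounds for free solutions to $U^2\times U^2$.
\end{enumerate}
This gives the estimate for $U^2$ functions with $\delta_0=\frac{4-p}{8}$. Interpolating against your Step 2 bound then yields the lemma for any $\delta<\delta_0$.
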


To handle the quadratic nonlinearities in the proof, we specifically need an $L^{\frac{3}{2}}$ bilinear estimate with noncentered frequency localization.
\begin{corollary}
Let $Q$ be a noncentered $N_{2}$-cube.
For $d\geq 6$, when \(N_1 \ge N_2\), there exists \(\delta > 0\) such that
\begin{align}\label{equ:bilinear estimate,Strichartz,L32}
\|\lrs{P_{Q} P_{N_1} u_{1}}  \lrs{P_{N_2} u_{2}} \|_{L_{t,x}^{\frac{3}{2}}([0,1] \times \mathbb{T}^d)}
\lesssim  N_2^{\frac{d-4}{3}} \left( \frac{N_2}{N_1} + \frac{1}{N_2} \right)^{\delta}
\| u_{1} \|_{Y^0} \| u_{2} \|_{Y^0}.
\end{align}
\end{corollary}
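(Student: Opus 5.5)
The plan is to interpolate between the $L^2$ bilinear Strichartz estimate of Lemma \ref{lemma:bilinear,strchartz,L2} and an $L^1_{t,x}$ bound coming from Cauchy--Schwarz, using H\"older in the form $\|F\|_{L^{3/2}}\le \|F\|_{L^{1}}^{1/3}\|F\|_{L^{2}}^{2/3}$ with $F=(P_{Q}P_{N_1}u_1)(P_{N_2}u_2)$. Since $P_Q$ is a (noncentered) Fourier multiplier bounded on $Y^0$ — it acts diagonally on each frequency and does not increase the $V^2$ norm of each coefficient — we may replace $u_1$ by $P_Q u_1$ throughout. The bound $\|P_Qu_1\|_{Y^0}\le \|u_1\|_{Y^0}$ is what is fed into the right-hand sides.

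For the $L^2$ piece, apply \eqref{equ:bilinear estimate,Strichartz,L2} to the pair $(P_Q u_1, u_2)$ with $N_1\ge N_2$. This gives
\[
\|F\|_{L^2_{t,x}}\lesssim N_2^{\frac{d-2}{2}}\Big(\frac{N_2}{N_1}+\frac1{N_2}\Big)^{\delta}\|u_1\|_{Y^0}\|u_2\|_{Y^0}.
\]

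For the $L^1$ piece, on $[0,1]\times\T^d$ Cauchy--Schwarz gives $\|F\|_{L^1_{t,x}}\le \|P_QP_{N_1}u_1\|_{L^2_{t,x}}\|P_{N_2}u_2\|_{L^2_{t,x}}$. Each factor is bounded via \eqref{equ:u-v and sobolev}: $\|v\|_{L^2_{t,x}([0,1])}\le\|v\|_{L^\infty_tL^2_x}\lesssim\|v\|_{Y^0}$. Hence $\|F\|_{L^1}\lesssim \|u_1\|_{Y^0}\|u_2\|_{Y^0}$, with no power of $N_2$.

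Combining the two pieces gives the power $N_2^{\frac{d-2}{3}}$, which is too large by $N_2^{1/3}$ compared with the target $N_2^{\frac{d-4}{3}}$. This is the main obstacle, and fixing it requires a better $L^1$ bound. Here the noncentered cube matters: $P_Q P_{N_1}u_1$ has frequencies in an $N_2$-cube. We therefore use the Strichartz estimate \eqref{equ:strichartz estimate with noncertered frequency localization} for both factors at some exponent $p\in(\frac{2(d+2)}{d},\infty)$ together with H\"older, choosing the exponents so that the powers of $N_2$ sum correctly.

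Concretely, write $F\in L^{3/2}$ via H\"older as $L^{p}\cdot L^{q}$ with $\frac1p+\frac1q=\frac23$. We can also interpolate directly: take $\theta$ with $\frac{2}{3}=\frac{\theta}{2}+\frac{1-\theta}{r}$, where $r$ is a Strichartz-type exponent and $\|F\|_{L^{r}}\le \|P_QP_{N_1}u_1\|_{L^{2r}}\|P_{N_2}u_2\|_{L^{2r}}\lesssim N_2^{2(\frac d2-\frac{d+2}{2r})}\|u_1\|_{Y^0}\|u_2\|_{Y^0}$. The bilinear $L^2$ piece carries the gain $(\frac{N_2}{N_1}+\frac1{N_2})^{\delta\theta}$.

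A cleaner choice is to use Strichartz with $2r=p$ slightly above $\frac{2(d+2)}{d}$ and $r<\frac32$. Since $d\ge6$ gives $\frac{2(d+2)}{d}\le \frac83<3$, the choice $r=\frac{d+2}{d}+\epsilon<\frac32$ is admissible. At this endpoint the Strichartz factor is $N_2^{0+}$ per factor, and interpolating between $L^{r}$ (power $\approx 0$) and $L^2$ (power $\frac{d-2}{2}$) at $L^{3/2}$ gives the $L^2$ weight $\theta=\frac{2(3-2r)}{3(2-r)}$. One must check that the resulting power $\theta\frac{d-2}{2}+O(\epsilon)$ is at most $\frac{d-4}{3}$. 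Any small excess would be absorbed by trading part of the $\delta$-gain, or by adjusting $r$; this bookkeeping of exponents, ensuring a strictly positive leftover $\delta'$, is the step I expect to require the most care. The conclusion is \eqref{equ:bilinear estimate,Strichartz,L32} with a smaller $\delta$.
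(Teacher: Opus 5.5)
Your final scheme is the paper's argument. The paper interpolates $L^{3/2}_{t,x}$ between $L^{7/5}_{t,x}$ and the bilinear $L^2$ estimate with weight $\theta=\frac29$, bounding the $L^{7/5}$ piece by H\"older into $L^{14/5}\times L^{14/5}$ and the noncentered Strichartz estimate (admissible because $\frac{14}{5}>\frac{2(d+2)}{d}$ for $d\geq 6$); this is your scheme with $r=\frac75$.

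The exponent check you left open closes with no excess. The power $d-\frac{d+2}{p}$ attached to $L^{p}$ is affine in $\frac1p$, so every admissible $r\in\big(\frac{d+2}{d},\frac32\big)$ gives exactly $N_2^{\frac{d-4}{3}}$, with leftover gain $\delta\theta>0$. This matters because your fallback of absorbing an excess into $\big(\frac{N_2}{N_1}+\frac{1}{N_2}\big)^{\delta}$ would not work: that factor is $\sim 1$ when $N_1\sim N_2$.
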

\begin{proof}
By applying the $L^{p}$ interpolation inequality, the $L^{2}$ bilinear Strichartz estimate \eqref{equ:bilinear estimate,Strichartz,L2} in Lemma
\ref{lemma:bilinear,strchartz,L2}, and the Strichartz estimate \eqref{equ:strichartz estimate with noncertered frequency localization} with
 non-centered frequency localization in Lemma \ref{lemma:Strichartz estimate}, we obtain
\begin{align*}
&\n{\lrs{P_{Q} P_{N_1} u_{1}}  \lrs{P_{N_2} u_{2}}}_{L_{t,x}^{\frac{3}{2}}}\\
\lesssim & \n{\lrs{P_{Q} P_{N_1} u_{1}}  \lrs{P_{N_2} u_{2}}}_{L_{t,x}^{\frac{7}{5}}}^{1-\theta}
\n{\lrs{P_{Q} P_{N_1} u_{1}}  \lrs{P_{N_2} u_{2}}}_{L_{t,x}^{2}}^{\theta}\notag\\
\lesssim& \lrc{\n{P_{Q} P_{N_1} u_{1}}_{L_{t,x}^{\frac{14}{5}}}\n{P_{N_2} u_{2}}_{L_{t,x}^{\frac{14}{5}}}}^{1-\theta}
\lrc{M_{2}^{\frac{d-2}{2}}\lrs{\frac{M_{2}}{M_{3}}+\frac{1}{M_{2}}}^{\delta}\n{P_{Q} P_{N_1} u_{1}}_{Y^{0}}\n{P_{N_2} u_{2}}_{Y^{0}}}^{\theta}\notag\\
\lesssim& N_{2}^{\frac{d-4}{3}}\lrs{\frac{N_{2}}{N_{1}}+\frac{1}{N_{2}}}^{\delta \theta}\n{P_{Q} P_{N_1} u_{1}}_{Y^{0}}\n{P_{N_2} u_{2}}_{Y^{0}},\notag
\end{align*}
which yields the desired estimate,
\end{proof}

\noindent\textbf{High-low frequency bilinear estimates.}
With the above Strichartz estimates in place, we can establish the required $U$-$V$ bilinear estimates. We present the proof for the $\T^{d}$ case, as the treatment for
$\R^{d}$ proceeds in the analogous manner.
\begin{proposition}\label{lemma:bilinear estimate,d6}
Let $\wt{u}\in \lr{u,\ol{u}}$, $d\geq 6$, and $s\in\lrc{\frac{d-8}{2},\frac{d-4}{2}}$. We have the high frequency estimate
\begin{align}\label{equ:bilinear estimate,high}
\int_{0}^{T}\int_{\T^{d}}\wt{u}_{1}(t,x)\wt{u}_{2}(t,x)g(t,x)dxdt\lesssim \n{u_{1}}_{Y^{s}}\n{u_{2}}_{Y^{\frac{d-4}{2}}}
\n{g}_{Y^{-s}},
\end{align}
and the low frequency estimate
\begin{align}\label{equ:bilinear estimate,low}
\int_{0}^{T}\int_{\T^{d}}\wt{u}_{1}(t,x)(P_{\leq M_{0}}\wt{u}_{2})(t,x)g(t,x)dxdt
\lesssim& T^{\frac{1}{6}}M_{0}^{\frac{1}{3}} \n{u_{1}}_{Y^{s}}\n{P_{\leq M_{0}}u_{2}}_{Y^{\frac{d-4}{2}}}
\n{g}_{Y^{-s}},
\end{align}
for all $T\leq 1$ and all frequencies $M_{0}\geq 1$. Furthermore, we have
\begin{align} \label{equ:bilinear estimate,high,dual argument}
\bbn{\int_{t_{0}}^{t}e^{i(t-\tau)\Delta}(\wt{u}_{1}\wt{u}_{2})d\tau}_{X^{s}}\lesssim \n{u_{1}}_{X^{s}}
\n{u_{2}}_{X^{\frac{d-4}{2}}},
\end{align}
and
\begin{align} \label{equ:bilinear estimate,low,dual argument}
\bbn{\int_{t_{0}}^{t}e^{i(t-\tau)\Delta}(\wt{u}_{1}\wt{u}_{2})d\tau}_{X^{s}}
\lesssim& \n{u_{1}}_{X^{s}}
\lrs{T^{\frac{1}{6}}M_{0}^{\frac{1}{3}}\n{P_{\leq M_{0}}u_{2}}_{X^{\frac{d-4}{2}}}+\n{P_{>M_{0}}u_{2}}_{X^{\frac{d-4}{2}}}}.
\end{align}

\end{proposition}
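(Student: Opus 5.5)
The plan is to prove the trilinear (duality) forms \eqref{equ:bilinear estimate,high} and \eqref{equ:bilinear estimate,low} first. The Duhamel bounds \eqref{equ:bilinear estimate,high,dual argument} and \eqref{equ:bilinear estimate,low,dual argument} then follow from the duality statement in Lemma \ref{lemma:U-V estimate,dual argument}, together with $\n{\cdot}_{Y^{s}}\lesssim\n{\cdot}_{X^{s}}$ from \eqref{equ:u-v and sobolev}. For the low-frequency Duhamel bound we split $u_{2}=P_{\leq M_{0}}u_{2}+P_{>M_{0}}u_{2}$. The piece $P_{>M_{0}}u_{2}$ is handled by \eqref{equ:bilinear estimate,high} applied with $u_{2}$ replaced by $P_{>M_{0}}u_{2}$, and the piece $P_{\leq M_{0}}u_{2}$ by \eqref{equ:bilinear estimate,low}. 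Conjugations $\wt{u}\in\{u,\ol{u}\}$ are harmless, since $\n{\ol{u}}_{Y^{s}}$ is comparable to $\n{u}_{Y^{s}}$ (the conjugate of a free solution is a backward free solution, and the Strichartz estimates hold for both). The argument uses only $L^{p}_{t,x}$-type bounds, which do not see the phase.

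For \eqref{equ:bilinear estimate,high}, we decompose $u_{1}=\sum P_{N_{1}}u_{1}$, $u_{2}=\sum P_{N_{2}}u_{2}$, $g=\sum P_{N_{3}}g$ dyadically. By frequency support, the two largest of $N_{1},N_{2},N_{3}$ are comparable.

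\textbf{Case A: $N_{2}\lesssim N_{1}\sim N_{3}$.} We pair $P_{N_{1}}u_{1}$ with $P_{N_{2}}u_{2}$ in $L^{3/2}_{t,x}$ and put $P_{N_{3}}g$ in $L^{3}_{t,x}$.
\begin{itemize}
\item To make the Strichartz factor for $g$ harmless, we split $P_{N_{1}}u_{1}$ and $P_{N_{3}}g$ into noncentered cubes $Q$ of side $N_{2}$. Only almost diagonal pairs interact.
\item On each cube, \eqref{equ:strichartz estimate with noncertered frequency localization} with $p=3$ gives $\n{P_{Q'}g}_{L^{3}}\lesssim N_{2}^{\frac{d}{2}-\frac{d+2}{3}}\n{P_{Q'}g}_{Y^{0}}$. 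This is valid since $3>\frac{2(d+2)}{d}$ for $d\ge 6$.
\item Combining with \eqref{equ:bilinear estimate,Strichartz,L32}, the powers are $N_{2}^{\frac{d-4}{3}+\frac{d-4}{6}}=N_{2}^{\frac{d-4}{2}}$, with the gain $(N_{2}/N_{1}+1/N_{2})^{\delta}$.
\item Cauchy–Schwarz over cubes, then Schur's test over $N_{1}\sim N_{3}$ and over $N_{2}\lesssim N_{1}$ using the $\delta$ gain, gives $\n{u_{1}}_{Y^{s}}\n{u_{2}}_{Y^{\frac{d-4}{2}}}\n{g}_{Y^{-s}}$ for every $s$, since $N_{1}^{s}N_{3}^{-s}\sim 1$.
\end{itemize}

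\textbf{Case B: $N_{1}\ll N_{2}\sim N_{3}$.} We need $N_{1}^{-s}N_{3}^{s}N_{2}^{-\frac{d-4}{2}}$ times the trilinear constant to be summable. Reversing roles (pair $u_{1}$ as the low factor with $u_{2}$) gives the constant $N_{1}^{\frac{d-4}{2}}(N_{1}/N_{2})^{\delta}$. Including the weights, the total factor is $(N_{1}/N_{2})^{\frac{d-4}{2}-s+\delta}$, which is summable because $s\le\frac{d-4}{2}$.

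\textbf{Case C: $N_{3}\ll N_{1}\sim N_{2}$.} The factor is $N_{3}^{\frac{d-4}{2}}(N_{3}/N_{1})^{\delta}\,N_{1}^{-s}N_{3}^{s}N_{2}^{-\frac{d-4}{2}}$, which reduces to $(N_{3}/N_{1})^{\frac{d-4}{2}+s+\delta}$. This is where the lower bound $s\geq\frac{d-8}{2}$ enters: it gives $\frac{d-4}{2}+s\ge d-6\ge 0$, and the $\delta$ gain makes the sum converge even at $d=6$.

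For \eqref{equ:bilinear estimate,low}, we rerun the same dyadic argument with $N_{2}\le M_{0}$. In the factor containing $P_{N_{2}}u_{2}$ we use Hölder in time on $[0,T]$ to extract $T^{\frac{1}{6}}$ (for example, $L^{3/2}_{t}\subset$ an $L^{2}_{t}$-based bound, or placing $P_{N_{2}}u_{2}$ in $L^{\infty}_{x}$ via Bernstein). This costs a power of $N_{2}\le M_{0}$, which the bound absorbs as $M_{0}^{\frac{1}{3}}$. Concretely, we interpolate between the $T$-independent estimate above and a crude bound with full $T$ gain, in which $\n{P_{N_{2}}u_{2}}_{L^{\infty}_{t,x}}\lesssim N_{2}^{d/2}\n{u_{2}}_{L^{\infty}_{t}L^{2}_{x}}$ and $u_{1},g$ are placed in $L^{2}_{t}$ over an interval of length $T$. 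The exponents $\frac16,\frac13$ should come out of this interpolation.

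The main obstacle is Case A: the noncentered cube decomposition needs a sharp $L^{3}$ Strichartz bound for $g$ at scale $N_{2}$, with enough decay left to sum over $N_{2}$ at the endpoint regularity. The $\delta$ gain from the corollary is exactly what makes this work.
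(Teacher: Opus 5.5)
Your reduction to the trilinear forms via Lemma \ref{lemma:U-V estimate,dual argument}, the splitting $u_2=P_{\le M_0}u_2+P_{>M_0}u_2$, and your proof of \eqref{equ:bilinear estimate,high} follow the paper's argument. Your Cases B and C are the paper's Cases C and B, and swapping which pair goes into $L^{3/2}$ in Case A is immaterial.

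The gap is in \eqref{equ:bilinear estimate,low}, where you say the exponents ``should come out'' of the interpolation. With the crude bound you actually specify, they do not. You put $P_{N_2}u_2$ in $L^\infty_{t,x}$ at its own dyadic scale, which costs $TN_2^{d/2}$. The sharp bound is $N_{\min}^{\frac{d-4}{2}}(\cdots)^{\delta}$, with $N_{\min}=\min(N_1,N_2,N_3)$. The ratio is $TN_{\min}^2\le TM_0^2$ only in your Case A.

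In your Case B ($N_1\ll N_2\sim N_3$) with $s=\frac{d-4}{2}$, which is exactly how the estimate is used at the $D_\phi$ nodes, the weighted crude bound is $TN_2^2(N_2/N_1)^{\frac{d-4}{2}}$. Your $\theta=\frac16$ geometric mean is then $T^{\frac16}N_2^{\frac13}(N_2/N_1)^{\frac{d-4}{12}}(N_1/N_2+1/N_1)^{\frac{5\delta}{6}}$, which does not decay when $N_1\sim 1$. Summing in $N_1$ gives $T^{\frac16}M_0^{\frac{d}{12}}$ instead of $T^{\frac16}M_0^{\frac13}$. Similarly, your Case C with $d=6$, $s=-1$ loses a factor $(N_1/N_3)^{\frac16}$. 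Such a bound would still suffice for the uniqueness argument, since $M_0$ is fixed before $T$, but it is not the stated estimate.

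The missing point is that the Bernstein/H\"older-in-time bound must be taken at scale $N_{\min}$. After cutting the two high-frequency functions into cubes of side $N_{\min}$, apply Bernstein to a piece of $u_2$ whose frequency support has diameter $\lesssim N_{\min}$:
\begin{itemize}
\item its own dyadic shell when $u_2$ is the low factor;
\item its cube-localized piece otherwise, as the paper does with $P_{Q^c}P_{\le M_0}u_{2,M_2}$ in its Case B.
\end{itemize}
The two largest frequencies are comparable and $N_2\le M_0$, so $N_{\min}\lesssim M_0$. Hence the crude bound exceeds the sharp one by at most $TN_{\min}^2\lesssim TM_0^2$, and your $\theta=\frac16$ average gives $T^{\frac16}M_0^{\frac13}$ with a $\frac{5\delta}{6}$ gain left for summation.

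The paper performs the same interpolation inside the $L^{3/2}$ factor. It bounds that factor by $L^3\times L^3$, gains $T^{\frac13}$ from H\"older in time, and pays $N_{\min}^{\frac23}\le M_0^{\frac23}$ from Bernstein. It then averages this with weights $\frac12,\frac12$ against the bilinear bound, giving the same $(TM_0^2)^{\frac16}$.

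A smaller point concerns conjugates. Your first justification is false: $\n{\ol{u}}_{Y^s}$ is not comparable to $\n{u}_{Y^s}$. For $u=e^{it\Delta}f$ the profile of $\ol{u}$ is $e^{-2it\Delta}\ol{f}$, whose $V^2$ norm at frequency $\xi$ grows like $(T|\xi|^2)^{1/2}$. Your second reason is the correct one. Every bound is applied to absolute values of products of frequency-localized factors, each measured in its own $Y^0$ norm, and conjugation only reflects frequency supports.
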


\begin{proof}
It suffices to establish the high and low frequency estimates \eqref{equ:bilinear estimate,high} and \eqref{equ:bilinear estimate,low}, as
the dual estimates \eqref{equ:bilinear estimate,high,dual argument} and \eqref{equ:bilinear estimate,low,dual argument} are subsequently derived via Lemma \ref{lemma:U-V estimate,dual argument}.

Without loss of generality, we simplify the notation by setting $\wt{u}=u$.
Applying a Littlewood-Paley decomposition, we obtain
\begin{align*}
\int_{0}^{T}\int_{\T^{d}}u_{1}(t,x)(P_{\leq M_{0}}u_{2})(t,x)g(t,x)dxdt=\sum_{M_{1},M_{2},M_{3}}I_{M_{1},M_{2},M_{3}},
\end{align*}
where
\begin{align*}
I_{M_{1},M_{2},M_{3}}=\int_{0}^{T}\int_{\T^{d}}\lrs{u_{1,M_{1}}}\lrs{u_{2,M_{2}}}g_{M_{3}}dxdt
\end{align*}
with $u_{1,M_{1}}=P_{M_{1}}u_{1}$, $u_{2,M_{2}}=P_{M_{2}}u_{2}$ and $g_{M_{3}}=P_{M_{3}}g$. Due to the frequency constraint, we then divide the sum into three cases as follows.

Case A. $M_{1}\sim M_{3}\geq M_{2}$.

Case B. $M_{1}\sim M_{2}\geq M_{3}$.

Case C. $M_{2}\sim M_{3}\geq M_{1}$.

We only need to address Case A and Case B, as the proof for Case C follows in a similar manner.

\vspace{1em}
\noindent \textbf{Proof of the high frequency estimate $(\ref{equ:bilinear estimate,high})$.}

\noindent \textbf{Case A. $M_{1}\sim M_{3}\geq M_{2}$.}

Let $I_{A}$ denote the integral restricted to the Case A,
and let $\xi_{1}$, $\xi_{2}$, and $\xi_{3}$ denote the frequency variables corresponding to the
spatial variables $x_{1}$, $x_{2}$, and $x_{3}$, respectively. We proceed by decomposing the $M_{1}$ and $M_{3}$ dyadic spaces
into $M_{2}$-size cubes. Due to the frequency constraint $\xi_{1}+\xi_{2}+\xi_{3}=0$, for each
choice $Q$ of an $M_{2}$-size cube within the $\xi_{1}$ space, the variable $\xi_{3}$ is
constrained to at most $10^{d}$ of $M_{2}$-size cubes dividing the $M_{3}$ dyadic space.
To simplify the notation, we group these $10^{d}$ $M_{2}$-size cubes as a single cube $Q^{c}$ associated with each $Q$. Then we have
\begin{align}\label{equ:case A,high,Q}
I_{A}=\sum_{\substack{M_{1},M_{2},M_{3}\\M_{1}\sim M_{3}\geq  M_{2}}}\sum_{Q}\int_{0}^{T}\int_{\T^{d}}\lrs{P_{Q}u_{1,M_{1}}}\lrs{u_{2,M_{2}}}\lrs{P_{Q^{c}}g_{M_{3}}}dxdt.
\end{align}
By H\"{o}lder inequality, we get
\begin{align*}
I_{A}\lesssim& \sum_{\substack{M_{1},M_{2},M_{3}\\M_{1}\sim M_{3}\geq M_{2}}}\sum_{Q} \n{\lrs{P_{Q}u_{1,M_{1}}}\lrs{u_{2,M_{2}}}
\lrs{P_{Q^{c}}g_{M_{3}}}}_{L_{t,x}^{1}}\\
\lesssim& \sum_{\substack{M_{1},M_{2},M_{3}\\ M_{1} \sim M_{3}\geq  M_{2}}}\sum_{Q}\n{P_{Q}u_{1,M_{1}}}_{L_{t,x}^{3}}
\n{\lrs{u_{2,M_{2}}}\lrs{P_{Q^{c}}g_{M_{3}}}}_{L_{t,x}^{\frac{3}{2}}}.
\end{align*}
 By Strichartz estimates \eqref{equ:bilinear estimate,Strichartz,L32} and \eqref{equ:strichartz estimate with noncertered frequency localization}, and Cauchy-Schwarz inequality in $Q$, we obtain
\begin{align*}
I_{A}\lesssim& \sum_{\substack{M_{1},M_{2},M_{3}\\ M_{1}\sim M_{3} \geq M_{2}}}\sum_{Q}M_{2}^{\frac{d-4}{6}}M_{2}^{\frac{d-4}{3}}\lrs{\frac{M_{2}}{M_{3}}+\frac{1}{M_{2}}}^{\delta}\n{P_{Q}u_{1,M_{1}}}_{Y^{0}}
\n{u_{2,M_{2}}}_{Y^{0}}
\n{P_{Q^{c}}g_{M_{3}}}_{Y^{0}}\\
\lesssim& \sum_{\substack{M_{1},M_{2},M_{3}\\ M_{1}\sim M_{3} \geq M_{2}}}M_{2}^{\frac{d-4}{2}}\lrs{\frac{M_{2}}{M_{3}}+\frac{1}{M_{2}}}^{\delta}\n{u_{1,M_{1}}}_{Y^{0}}
\n{u_{2,M_{2}}}_{Y^{0}}
\n{g_{M_{3}}}_{Y^{0}}\\
\lesssim& \sum_{\substack{M_{1},M_{2},M_{3}\\ M_{1}\sim M_{3} \geq M_{2}}}\lrs{\frac{M_{2}}{M_{3}}+\frac{1}{M_{2}}}^{\delta}\n{u_{1,M_{1}}}_{Y^{s}}
\n{u_{2,M_{2}}}_{Y^{\frac{d-4}{2}}}
\n{g_{M_{3}}}_{Y^{-s}}.
\end{align*}
Supping out $M_{2}$ and applying Cauchy-Schwarz inequality, we get
\begin{align}\label{equ:IA,bilinear,U-V}
I_{A}\lesssim &\n{u_{2}}_{Y^{\frac{d-4}{2}}}\sum_{\substack{M_{1},M_{2},M_{3}\\ M_{1}\sim M_{3} \geq M_{2}}}\lrs{\frac{M_{2}}{M_{3}}+\frac{1}{M_{2}}}^{\delta}\n{u_{1,M_{1}}}_{Y^{s}}
\n{g_{M_{3}}}_{Y^{-s}}\\
\lesssim& \n{u_{2}}_{Y^{\frac{d-4}{2}}}\sum_{\substack{M_{1},M_{3}\\ M_{1}\sim M_{3}}}\n{u_{1,M_{1}}}_{Y^{s}}
\n{g_{M_{3}}}_{Y^{-s}}\notag\\
\lesssim& \n{u_{1}}_{Y^{s}} \n{u_{2}}_{Y^{\frac{d-4}{2}}}\n{g}_{Y^{-s}},\notag
\end{align}
which completes the proof of the high frequency estimate for Case A.

\vspace{1em}

\noindent \textbf{Case B. $M_{1}\sim M_{2}\geq M_{3}$.}

Let $I_{B}$ denote the integral restricted to the Case B. Similar to the decomposition for
 Case A presented in (\ref{equ:case A,low,Q}), we decompose the $M_{1}$ and $M_{2}$ dyadic spaces
 into cubes of size $M_{3}$. This yields
\begin{align*}
I_{B}=\sum_{\substack{M_{1},M_{2},M_{3}\\
M_{1}\sim M_{2}\geq M_{3}}}
\sum_{Q}\int_{0}^{T}\int_{\T^{d}}\lrs{P_{Q}u_{1,M_{1}}}\lrs{P_{Q^{c}}u_{2,M_{2}}}g_{M_{3}}dxdt,
\end{align*}
where $Q$ and $Q^{c}$ are of size $M_{3}$.
By H\"{o}lder inequality, we obtain
\begin{align*}
I_{B}\lesssim& \sum_{\substack{M_{1},M_{2},M_{3}\\
M_{1}\sim M_{2}\geq M_{3}}}\sum_{Q} \n{\lrs{P_{Q}u_{1,M_{1}}}\lrs{P_{Q^{c}}u_{2,M_{2}}}
g_{M_{3}}}_{L_{t,x}^{1}}\\
\lesssim& \sum_{\substack{M_{1},M_{2},M_{3}\\ M_{1}\sim M_{2}\geq M_{3}}}\sum_{Q}\n{P_{Q}u_{1,M_{1}}}_{L_{t,x}^{3}}
\n{\lrs{P_{Q^{c}}u_{2,M_{2}}}\lrs{g_{M_{3}}}}_{L_{t,x}^{\frac{3}{2}}}.
\end{align*}
 By Strichartz estimates \eqref{equ:bilinear estimate,Strichartz,L32} and \eqref{equ:strichartz estimate with noncertered frequency localization}, and Cauchy-Schwarz inequality in $Q$, we derive
\begin{align*}
I_{B}\lesssim& \sum_{\substack{M_{1},M_{2},M_{3}\\ M_{1}\sim M_{2}\geq M_{3}}}\sum_{Q}M_{3}^{\frac{d-4}{6}}M_{3}^{\frac{d-4}{3}}\lrs{\frac{M_{3}}{M_{2}}+\frac{1}{M_{3}}}^{\delta}\n{P_{Q}u_{1,M_{1}}}_{Y^{0}}
\n{P_{Q^{c}}u_{2,M_{2}}}_{Y^{0}}
\n{g_{M_{3}}}_{Y^{0}}\\
\lesssim& \sum_{\substack{M_{1},M_{2},M_{3}\\ M_{1}\sim M_{2}\geq M_{3}}}M_{3}^{\frac{d-4}{2}}\lrs{\frac{M_{3}}{M_{2}}+\frac{1}{M_{3}}}^{\delta}\n{u_{1,M_{1}}}_{Y^{0}}
\n{u_{2,M_{2}}}_{Y^{0}}
\n{g_{M_{3}}}_{Y^{0}}\\
\lesssim& \sum_{\substack{M_{1},M_{2},M_{3}\\ M_{1}\sim M_{2}\geq M_{3}}}M_{1}^{-s}M_{2}^{-\frac{d-4}{2}}M_{3}^{\frac{d-4}{2}+s}\lrs{\frac{M_{3}}{M_{2}}+\frac{1}{M_{3}}}^{\delta}\n{u_{1,M_{1}}}_{Y^{s}}
\n{u_{2,M_{2}}}_{Y^{\frac{d-4}{2}}}
\n{g_{M_{3}}}_{Y^{-s}}\\
\lesssim& \sum_{\substack{M_{1},M_{2},M_{3}\\ M_{1}\sim M_{2}\geq M_{3}}}\lrs{\frac{M_{3}}{M_{2}}+\frac{1}{M_{3}}}^{\delta}\n{u_{1,M_{1}}}_{Y^{s}}
\n{u_{2,M_{2}}}_{Y^{\frac{d-4}{2}}}
\n{g_{M_{3}}}_{Y^{-s}}.
\end{align*}
In a similar way as the estimate \eqref{equ:IA,bilinear,U-V} for $I_{A}$,
 we get
\begin{align*}
I_{B}
\lesssim& \n{g}_{Y^{-s}}\sum_{\substack{M_{1},M_{2}\\ M_{1}\sim M_{2}}}\n{u_{1,M_{1}}}_{Y^{s}}
\n{u_{2,M_{2}}}_{Y^{\frac{d-4}{2}}}\\
\lesssim& \n{u_{1}}_{Y^{s}} \n{u_{2}}_{Y^{\frac{d-4}{2}}}\n{g}_{Y^{-s}},
\end{align*}
which completes the proof of the high frequency estimate for Case B.

\vspace{1em}

\noindent\textbf{Proof of the low frequency estimate $(\ref{equ:bilinear estimate,low})$.}

\noindent \textbf{Case A. $M_{1}\sim M_{3}\geq M_{2}$.}

Analogous to the decomposition of Case A for high frequency estimate presented in (\ref{equ:case A,high,Q}), we decompose the $M_{1}$ and $M_{3}$ dyadic spaces
 into cubes of size $M_{2}$, which yields
\begin{align}\label{equ:case A,low,Q}
I_{A}=\sum_{\substack{M_{1},M_{2},M_{3}\\M_{1}\sim M_{3}\geq  M_{2}}}\sum_{Q}\int_{0}^{T}\int_{\T^{d}}\lrs{P_{Q}u_{1,M_{1}}}\lrs{P_{\leq M_{0}} u_{2,M_{2}}}\lrs{P_{Q^{c}}g_{M_{3}}}dxdt.
\end{align}
By H\"{o}lder inequality, we deduce
\begin{align*}
I_{A}\lesssim& \sum_{\substack{M_{1},M_{2},M_{3}\\M_{1}\sim M_{3}\geq M_{2}}}\sum_{Q} \n{\lrs{P_{Q}u_{1,M_{1}}}\lrs{P_{\leq M_{0}}u_{2,M_{2}}}
\lrs{P_{Q^{c}}g_{M_{3}}}}_{L_{t,x}^{1}}\\
\lesssim& \sum_{\substack{M_{1},M_{2},M_{3}\\ M_{1} \sim M_{3}\geq  M_{2}}}\sum_{Q}\n{P_{Q}u_{1,M_{1}}}_{L_{t,x}^{3}}
\n{\lrs{P_{\leq M_{0}}u_{2,M_{2}}}\lrs{P_{Q^{c}}g_{M_{3}}}}_{L_{t,x}^{\frac{3}{2}}}.
\end{align*}
Applying H\"{o}lder inequality, Bernstein inequality, Strichartz estimate \eqref{equ:strichartz estimate with noncertered frequency localization}, and  estimate \eqref{equ:u-v and sobolev}, we have
\begin{align}\label{equ:L23,low,T}
&\n{\lrs{P_{\leq M_{0}}u_{2,M_{2}}}\lrs{P_{Q^{c}}g_{M_{3}}}}_{L_{t,x}^{\frac{3}{2}}}\\
\lesssim&  \n{P_{\leq M_{0}}u_{2,M_{2}}}_{L_{t,x}^{3}}\n{P_{Q^{c}}g_{M_{3}}}_{L_{t,x}^{3}}\notag\\
\lesssim& T^{\frac{1}{3}}M_{0}^{\frac{2}{3}}M_{2}^{\frac{d-4}{6}}\n{P_{\leq M_{0}}u_{2,M_{2}}}_{L_{t}^{\infty}L_{x}^{2}} M_{2}^{\frac{d-4}{6}}
\n{P_{Q^{c}}g_{M_{3}}}_{Y^{0}}\notag\\
\lesssim& T^{\frac{1}{3}}M_{0}^{\frac{2}{3}}M_{2}^{\frac{d-4}{3}}\n{P_{\leq M_{0}}u_{2,M_{2}}}_{Y^{0}}
\n{P_{Q^{c}}g_{M_{3}}}_{Y^{0}}.\notag
\end{align}
One half of the above term is estimated using estimate \eqref{equ:L23,low,T}, and the complementary half is estimated using the $L^{\frac{3}{2}}$ Strichartz estimate \eqref{equ:bilinear estimate,Strichartz,L32}. We then obtain
\begin{align}\label{equ:low,IA}
I_{A}\lesssim& T^{\frac{1}{6}}M_{0}^{\frac{1}{3}}\sum_{\substack{M_{1},M_{2},M_{3}\\ M_{1} \sim M_{3}\geq  M_{2}}}\sum_{Q}
M_{2}^{\frac{d-4}{3}}\lrs{\frac{M_{2}}{M_{3}}+\frac{1}{M_{2}}}^{\frac{\delta}{2}}
\n{P_{Q}u_{1,M_{1}}}_{L_{t,x}^{3}}
\n{P_{\leq M_{0}}u_{2,M_{2}}}_{Y^{0}}\n{P_{Q^{c}}g_{M_{3}}}_{Y^{0}}.
\end{align}
Applying Strichartz estimate \eqref{equ:strichartz estimate with noncertered frequency localization} and Cauchy-Schwarz inequality in
$Q$, we derive
\begin{align*}
I_{A}\lesssim& T^{\frac{1}{6}}M_{0}^{\frac{1}{3}}\sum_{\substack{M_{1},M_{2},M_{3}\\ M_{1}\sim M_{3} \geq M_{2}}}\sum_{Q}M_{2}^{\frac{d-4}{2}}\lrs{\frac{M_{2}}{M_{3}}+\frac{1}{M_{2}}}^{\frac{\delta}{2}}\n{P_{Q}u_{1,M_{1}}}_{Y^{0}}
\n{P_{\leq M_{0}}u_{2,M_{2}}}_{Y^{0}}
\n{P_{Q^{c}}g_{M_{3}}}_{Y^{0}}\\
\lesssim& T^{\frac{1}{6}}M_{0}^{\frac{1}{3}}\sum_{\substack{M_{1},M_{2},M_{3}\\ M_{1}\sim M_{3} \geq M_{2}}}M_{2}^{\frac{d-4}{2}}\lrs{\frac{M_{2}}{M_{3}}+\frac{1}{M_{2}}}^{\frac{\delta}{2}}\n{u_{1,M_{1}}}_{Y^{0}}
\n{P_{\leq M_{0}}u_{2,M_{2}}}_{Y^{0}}
\n{g_{M_{3}}}_{Y^{0}}\\
\lesssim& T^{\frac{1}{6}}M_{0}^{\frac{1}{3}}\sum_{\substack{M_{1},M_{2},M_{3}\\ M_{1}\sim M_{3} \geq M_{2}}}\lrs{\frac{M_{2}}{M_{3}}+\frac{1}{M_{2}}}^{\frac{\delta}{2}}\n{u_{1,M_{1}}}_{Y^{s}}
\n{P_{\leq M_{0}}u_{2,M_{2}}}_{Y^{\frac{d-4}{2}}}
\n{g_{M_{3}}}_{Y^{-s}}.
\end{align*}
Supping out $M_{2}$ and applying Cauchy-Schwarz inequality, we get
\begin{align}
I_{A}\lesssim &T^{\frac{1}{6}}M_{0}^{\frac{1}{3}}\n{P_{\leq M_{0}}u_{2}}_{Y^{\frac{d-4}{2}}}\sum_{\substack{M_{1},M_{2},M_{3}\\ M_{1}\sim M_{3} \geq M_{2}}}\lrs{\frac{M_{2}}{M_{3}}+\frac{1}{M_{2}}}^{\frac{\delta}{2}}\n{u_{1,M_{1}}}_{Y^{s}}
\n{g_{M_{3}}}_{Y^{-s}}\label{equ:IA,low,U-V}\\
\lesssim& T^{\frac{1}{6}}M_{0}^{\frac{1}{3}}\n{P_{\leq M_{0}}u_{2}}_{Y^{\frac{d-4}{2}}}\sum_{\substack{M_{1},M_{3}\\ M_{1}\sim M_{3}}}\n{u_{1,M_{1}}}_{Y^{s}}
\n{g_{M_{3}}}_{Y^{-s}}\notag\\
\lesssim& T^{\frac{1}{6}}M_{0}^{\frac{1}{3}}\n{u_{1}}_{Y^{s}} \n{P_{\leq M_{0}}u_{2}}_{Y^{\frac{d-4}{2}}}\n{g}_{Y^{-s}},\notag
\end{align}
which completes the proof of the low frequency estimate for Case A.

\vspace{1em}

\noindent \textbf{Case B. $M_{1}\sim M_{2}\geq M_{3}$.}

Similar to the decomposition for
 Case A presented in (\ref{equ:case A,low,Q}), we decompose the $M_{1}$ and $M_{2}$ dyadic spaces
 into cubes of size $M_{3}$. Then we have
\begin{align*}
I_{B}=\sum_{\substack{M_{1},M_{2},M_{3}\\
M_{1}\sim M_{2}\geq M_{3}}}
\sum_{Q}\int_{0}^{T}\int_{\T^{d}}\lrs{P_{Q}u_{1,M_{1}}}\lrs{P_{Q^{c}}P_{\leq M_{0}}u_{2,M_{2}}}g_{M_{3}}dxdt.
\end{align*}
Adopting the same treatment as in \eqref{equ:L23,low,T}--\eqref{equ:low,IA}, we get
\begin{align*}
I_{B}\lesssim& \sum_{\substack{M_{1},M_{2},M_{3}\\
M_{1}\sim M_{2}\geq M_{3}}}\sum_{Q} \n{\lrs{P_{Q}u_{1,M_{1}}}\lrs{P_{Q^{c}}P_{\leq M_{0}}u_{2,M_{2}}}
g_{M_{3}}}_{L_{t,x}^{1}}\\
\lesssim& \sum_{\substack{M_{1},M_{2},M_{3}\\ M_{1}\sim M_{2}\geq M_{3}}}\sum_{Q}\n{P_{Q}u_{1,M_{1}}}_{L_{t,x}^{3}}
\n{\lrs{P_{Q^{c}}P_{\leq M_{0}}u_{2,M_{2}}}\lrs{g_{M_{3}}}}_{L_{t,x}^{\frac{3}{2}}}\\
\lesssim&T^{\frac{1}{6}}M_{0}^{\frac{1}{3}}\sum_{\substack{M_{1},M_{2},M_{3}\\ M_{1} \sim M_{2}\geq  M_{3}}}\sum_{Q}
M_{3}^{\frac{d-4}{3}}\lrs{\frac{M_{3}}{M_{2}}+\frac{1}{M_{3}}}^{\frac{\delta}{2}}
\n{P_{Q}u_{1,M_{1}}}_{L_{t,x}^{3}}
\n{P_{Q^{c}}P_{\leq M_{0}}u_{2,M_{2}}}_{Y^{0}}\n{g_{M_{3}}}_{Y^{0}}.
\end{align*}
 By Strichartz estimate \eqref{equ:strichartz estimate with noncertered frequency localization} and Cauchy-Schwarz inequality in $Q$, we derive
\begin{align*}
I_{B}\lesssim& T^{\frac{1}{6}}M_{0}^{\frac{1}{3}}\sum_{\substack{M_{1},M_{2},M_{3}\\ M_{1}\sim M_{2}\geq M_{3}}}\sum_{Q}M_{3}^{\frac{d-4}{2}}\lrs{\frac{M_{3}}{M_{2}}+\frac{1}{M_{3}}}^{\frac{\delta}{2}}\n{P_{Q}u_{1,M_{1}}}_{Y^{0}}
\n{P_{Q^{c}}P_{\leq M_{0}}u_{2,M_{2}}}_{Y^{0}}
\n{g_{M_{3}}}_{Y^{0}}\\
\lesssim& T^{\frac{1}{6}}M_{0}^{\frac{1}{3}}\sum_{\substack{M_{1},M_{2},M_{3}\\ M_{1}\sim M_{2}\geq M_{3}}}M_{3}^{\frac{d-4}{2}}\lrs{\frac{M_{3}}{M_{2}}+\frac{1}{M_{3}}}^{\frac{\delta}{2}}\n{u_{1,M_{1}}}_{Y^{0}}
\n{P_{\leq M_{0}}u_{2,M_{2}}}_{Y^{0}}
\n{g_{M_{3}}}_{Y^{0}}\\
\lesssim& T^{\frac{1}{6}}M_{0}^{\frac{1}{3}}\sum_{\substack{M_{1},M_{2},M_{3}\\ M_{1}\sim M_{2}\geq M_{3}}}\lrs{\frac{M_{3}}{M_{2}}+\frac{1}{M_{3}}}^{\frac{\delta}{2}}\n{u_{1,M_{1}}}_{Y^{s}}
\n{P_{\leq M_{0}}u_{2,M_{2}}}_{Y^{\frac{d-4}{2}}}
\n{g_{M_{3}}}_{Y^{-s}}.
\end{align*}
In the same way as \eqref{equ:IA,low,U-V} for $I_{A}$, we get
\begin{align*}
I_{B}\lesssim &T^{\frac{1}{6}}M_{0}^{\frac{1}{3}}\n{u_{1}}_{Y^{s}} \n{P_{\leq M_{0}}u_{2}}_{Y^{\frac{d-4}{2}}}\n{g}_{Y^{-s}},
\end{align*}
which completes the proof of the low frequency estimate for Case B.
\end{proof}

\section{Proof of the Main Theorem}\label{sec:Proof of the Main Theorem}
\begin{proof}[\textbf{Proof of Theorem $\ref{thm:uniqueness for nls}$}]
Let $\phi_{1}$ and $\phi_{2}$ be two $C([0,T];H^{s_{c}})$
solutions to the quadratic NLS \eqref{equ:NLS} with the same initial datum $\phi_{0}$. We set
\begin{align}
\Phi^{(k)}(t)=\int \phi^{\otimes k}d\nu_{t}(\phi),
\end{align}
where $\nu_{t}(\phi)=\delta_{\phi_{1}(t)}(\phi)-\delta_{\phi_{2}(t)}(\phi)$. Then by Proposition \ref{lemma:upper echelon form, time integration domain} and the equivalence of compatible time integration domain as shown in \eqref{equ:compatible,Tc,Td}, we rewrite
\begin{align*}
\Phi^{(1)}(t_{1})=&\sum_{\mu\in \mathrm{C}^{(k)}_{\mathrm{up}} }\int_{T_{D}(\mu)}J_{\mu}^{(k+1)}(\Phi^{(k+1)})(t_{1},\underline{t}_{k+1})
d\underline{t}_{k+1}\\
=&\sum_{\mu\in \mathrm{C}^{(k)}_{\mathrm{up}}}\int_{T_{C}(\mu)}J_{\mu}^{(k+1)}(\Phi^{(k+1)})(t_{1},\underline{t}_{k+1})
d\underline{t}_{k+1}
\end{align*}
where there are at most $4^{k}$ terms inside the the class $\mathrm{C}^{(k)}_{\mathrm{up}}$ of the upper echelon form. By iterative estimate \eqref{equ:estimate for the compatible time integration domain} in Proposition \ref{lemma:estimate for the compatible time integration domain}, we have
\begin{align*}
&\n{\lra{\nabla_{x_{1}}}^{\frac{d-8}{2}}\Phi^{(1)}(t_{1})}_{L_{T}^{\infty}L_{x_{1}}^{2}}\\
\leq& \sum_{\mu\in \mathrm{C}^{(k)}_{\mathrm{up}}}
\bbn{\lra{\nabla_{x_{1}}}^{\frac{d-8}{2}}\int_{T_{C}(\mu)}J_{\mu}^{(k+1)}
(\Phi^{(k+1)})
(t_{1},\underline{t}_{k+1})d\underline{t}_{k+1}}
_{L_{T}^{\wq}L_{x_{1}}^{2}}\\
\leq& \sum_{\mu\in \mathrm{C}^{(k)}_{\mathrm{up}}}C^{k}\int_{0}^{T}\int \n{\phi}_{H^{\frac{d-4}{2}}}^{\frac{k}{2}+1}\lrs{T^{\frac{1}{6}}M_{0}^{\frac{1}{3}}\n{P_{\leq M_{0}}\phi}_{H^{\frac{d-4}{2}}}+\n{P_{>M_{0}}\phi}_{H^{\frac{d-4}{2}}}}^{\frac{k}{2}}
d|\nu_{t_{k+1}}|(\phi)dt_{k+1}
\end{align*}
Using the UTFL property \eqref{equ:UTFL for NLS} in Lemma \ref{lemma:UTFL}, we derive
\begin{align*}
\n{\lra{\nabla_{x_{1}}}^{\frac{d-8}{2}}\Phi^{(1)}(t_{1})}_{L_{T}^{\infty}L_{x_{1}}^{2}}\leq &
2T(4C)^{k}(C_{0})^{\frac{k}{2}+1}\lrs{T^{\frac{1}{6}}M_{0}^{\frac{1}{3}}C_{0}+\ve}^{\frac{k}{2}}\\
=&2TC_{0}\lrs{16C^{2}C_{0}^{2}T^{\frac{1}{6}}M_{0}^{\frac{1}{3}}+16C^{2}C_{0}\ve}^{\frac{k}{2}}
\end{align*}
where $C_{0}=\max\lr{\n{\phi_{1}}_{L_{T_{0}}^{\infty}H^{\frac{d-4}{2}}},\n{\phi_{2}}_{L_{T_{0}}^{\infty}H^{\frac{d-4}{2}}}}$.
 Selecting $\ve$ small enough such that $16 C^{2}C_{0}\ve< \frac{1}{4}$ and then selecting $T$ small enough such that $16C^{2}C_{0}^{2}T^{\frac{1}{6}}M_{0}^{\frac{1}{3}}< \frac{1}{4}$, we thus have for $T$ small enough,
\begin{align*}
\n{\Phi^{(1)}(t_{1})}_{L_{T}^{\infty}L_{x,\xi}^{2}}\leq 2TC_{0}\lrs{\frac{1}{2}}^{k}\to 0, \quad \text{as $k\to \infty$.}
\end{align*}
Therefore, we have concluded that $\phi(t)=0$ for $t\in [0,T]$. By a bootstrap argument, we can then fill the whole $[0,T_{0}]$ interval.
\end{proof}

\noindent \textbf{Acknowledgements} S. Shen would like to thank Professors Xuwen Chen, Justin Holmer, Changxing Miao, and Zhifei Zhang for encouraging and helpful discussions.
S. Shen was supported in part by the National Key R\&D Program of China under Grant 2024YFA1015500, NSF of China under Grant 12501322, and Anhui Provincial NSF 2508085QA001.

\noindent\textbf{Data Availability Statement}
Data sharing is not applicable to this article as no datasets were generated or analysed during the current study.

\noindent\textbf{Conflict of Interest}
The authors declare that they have no conflict of interest.

\appendix
\section{Uniform in Time Frequency Localization for Quadratic NLS}\label{sec:Uniform in Time Frequency Localization for Quadratic NLS}
The uniform-in-time frequency localization property (UTFL) originates from the uniqueness analysis of the GP hierarchy in \cite{CH19}. In the present quadratic NLS setting, we give a proof of this property, adapting the arguments from \cite{CH22unc,CSZ22}.
\begin{lemma}\label{lemma:UTFL}
Let $u$ be a $C([0,T_{0}];H^{s_{c}})$ solution to the quadratic NLS \eqref{equ:NLS}. Then $u$ satisfies the uniform-in-time frequency localization property. More precisely, for every $\ve>0$ there exists $M(\ve)$ such that
\begin{equation} \label{equ:UTFL for NLS}
\n{\lra{\nabla}^{s_{c}}P_{> M(\ve)}u}_{L_{T_{0}}^{\infty}L_{x}^{2}}\leq \ve.
\end{equation}
\end{lemma}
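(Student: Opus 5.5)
The plan is to prove the uniform-in-time frequency localization by combining continuity in time with compactness. Since $u\in C([0,T_{0}];H^{s_{c}})$ and $[0,T_{0}]$ is compact, the orbit $K=\lr{u(t):t\in[0,T_{0}]}$ is a compact subset of $H^{s_{c}}$, being the continuous image of a compact interval. Compact subsets of $H^{s_{c}}$ have uniformly small high-frequency tails. Hence the statement should follow from an elementary $\ve/3$-net argument. The proof never uses the equation itself beyond membership in $C([0,T_{0}];H^{s_{c}})$.

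\textbf{Step 1 (pointwise tail).} For each fixed $f\in H^{s_{c}}$, dominated convergence (or Plancherel, summing over $|\xi|>M$ on $\T^{d}$ or integrating on $\R^{d}$) gives
\begin{align*}
\n{\lra{\nabla}^{s_{c}}P_{>M}f}_{L^{2}}\to 0 \quad \text{as } M\to\infty.
\end{align*}

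\textbf{Step 2 (finite net).} Given $\ve>0$, use the uniform continuity of $t\mapsto u(t)$ in $H^{s_{c}}$ on $[0,T_{0}]$ to choose finitely many times $t_{1},\dots,t_{N}$ with the following property: for every $t$ there is an $i$ with $\n{u(t)-u(t_{i})}_{H^{s_{c}}}\leq \ve/2$. Then apply Step 1 to each $u(t_{i})$ and set $M(\ve)=\max_{i}M_{i}$, where $M_{i}$ is chosen so that $\n{\lra{\nabla}^{s_{c}}P_{>M_{i}}u(t_{i})}_{L^{2}}\leq\ve/2$.

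\textbf{Step 3 (conclusion).} For every $t\in[0,T_{0}]$, the operator $\lra{\nabla}^{s_{c}}P_{>M}$ is bounded from $H^{s_{c}}$ to $L^{2}$ with norm at most $1$, up to the harmless constant of the smooth Littlewood--Paley projector, which can be absorbed by using $\ve/(2C)$ in Step 2. We may also choose the projector with a symbol bounded by $1$ that is monotone in $M$, so that enlarging $M_{i}$ to $M(\ve)$ does not increase the tail. With these choices,
\begin{align*}
\n{\lra{\nabla}^{s_{c}}P_{>M(\ve)}u(t)}_{L^{2}}\leq \n{u(t)-u(t_{i})}_{H^{s_{c}}}+\n{\lra{\nabla}^{s_{c}}P_{>M(\ve)}u(t_{i})}_{L^{2}}\leq \ve,
\end{align*}
which is \eqref{equ:UTFL for NLS}.

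There is no real obstacle. The only point requiring care is the monotonicity in $M$ of the tail for the chosen projector. With a smooth $P_{>M}$, the tail is still bounded by the sharp tail at $M/2$, so one simply picks $M_{i}$ using sharp cutoffs at a slightly lower frequency. The argument of \cite{CH22unc,CSZ22} via Duhamel and Strichartz is unnecessary here, because continuity in $H^{s_{c}}$ is assumed.
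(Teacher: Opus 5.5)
Your proof is correct, and it is more elementary than the paper's.

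Both arguments end the same way: a finite cover of $[0,T_0]$ and $M(\ve)=\max_j M_{t_j}$. In both cases this step needs the tail to be nonincreasing in $M$; the paper uses this tacitly, and you handle it explicitly. The proofs differ in how they control the tail $t\mapsto\n{\lra{\nabla}^{s_c}P_{>M}u(t)}_{L^2}$ near a fixed $t_0$.

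The paper writes the squared tail as the full $H^{s_c}$ norm minus the low-frequency part $\n{\lra{\nabla}^{s_c}P_{\le M}u}_{L^2}^2$. It handles the full norm by continuity of $t\mapsto\n{u(t)}_{H^{s_c}}$. It handles the low-frequency part by differentiating in time and using the equation, which costs a Bernstein factor $M^2$ and the product bound $\n{B(u,u)}_{H^{s_c-2}}\lesssim\n{u}_{H^{s_c}}^2$. As a result, its time window $\delta'$ depends on $M$.

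You instead use that $\lra{\nabla}^{s_c}P_{>M}$ is bounded from $H^{s_c}$ to $L^2$ uniformly in $M$. Uniform continuity of $u$ in $H^{s_c}$ then gives equicontinuity of all the tails at once, with no appeal to the equation or to any bilinear Sobolev estimate. Your argument shows the lemma is a soft compactness fact about the orbit $\{u(t)\}$ in $H^{s_c}$.

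The paper's route does give a quantitative Lipschitz-in-time bound for the low-frequency part, in terms of $\sup_t\n{u}_{H^{s_c}}$. It still relies on the purely qualitative continuity of the full norm, though. So under the stated hypothesis $u\in C([0,T_0];H^{s_c})$ it yields nothing beyond your conclusion, and its use of the equation is superfluous.

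One small correction to your last sentence: the paper's proof is the energy-type computation described above, not a Duhamel--Strichartz argument.
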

\begin{proof}
We compute
\begin{align*}
\babs{\pa_{t}\n{\lra{\nabla}^{s_{c}} P_{\leq M}u}_{L_{x}^{2}}^{2}}=&2\bbabs{\operatorname{Im} \int P_{\leq M}\lra{\nabla}^{s_{c}} u\cdot P_{\leq M}\lra{\nabla}^{s_{c}}B(u,u)dx}\\
\leq& 2\n{P_{\leq M}\lra{\nabla}^{s_{c}} u}_{L^{2}}\n{P_{\leq M}\lra{\nabla}^{s_{c}}B(u,u)}_{L^{2}}.
\end{align*}
Noting that $\n{P_{\leq M}\lra{\nabla}^{s}f}_{L^{2}}\lesssim M^{s}\n{P_{\leq M}f}_{L^{2}}$, then by Bernstein inequality and Sobolev inequality, we have
\begin{align*}
\babs{\pa_{t}\n{\lra{\nabla}^{s_{c}} P_{\leq M}u}_{L_{x}^{2}}^{2}}\lesssim & 2M^{2}\n{ P_{\leq M}\lra{\nabla}^{s_{c}}u}_{L^{2}}\n{P_{\leq M}\lra{\nabla}^{s_{c}-2}B(u,u)}_{L^{2}}\\
\lesssim& 2M^{2}\n{u}_{H^{s_{c}}}^{2}.
\end{align*}
Thus, there is $\delta'>0$ such that for any $t_{0}\in[0,T_{0}]$ and  $t\in(t_{0}-\delta',t_{0}+\delta')\cap[0,T_{0}]$, we have
\begin{align*}
\bbabs{\n{\lra{\nabla}^{s_{c}} P_{\leq M} u(t)}_{L_{x}^{2}}^{2}-\n{\lra{\nabla}^{s_{c}} P_{\leq M}u(t_{0})}_{L_{x}^{2}}^{2}}\leq \frac{1}{16}\ve^{2}.
\end{align*}
Since $u$ is continuous in $H^{s_{c}}$, for each $t_{0}$ we can find $\delta''>0$ such that
\begin{align*}
\bbabs{\n{\lra{\nabla}^{s_{c}}  u(t)}_{L_{x}^{2}}^{2}-\n{\lra{\nabla}^{s_{c}} u(t_{0})}_{L_{x}^{2}}^{2}}\leq \frac{1}{16}\ve^{2}
\end{align*}
for all $t\in(t_{0}-\delta'',t_{0}+\delta'')\cap[0,T_{0}]$. With $\delta=\min\{\delta',\delta''\}$, we get for $t\in(t_{0}-\delta,t_{0}+\delta)\cap[0,T_{0}]$,
\begin{align*}
\bbabs{\n{\lra{\nabla}^{s_{c}}P_{>M}u(t)}_{L_{x}^{2}}^{2}-\n{\lra{\nabla}^{s_{c}}P_{>M}u(t_{0})}_{L_{x}^{2}}^{2}}\leq\frac{1}{4}\ve^{2}.
\end{align*}

On the other hand, for each fixed $t_{0}\in[0,T_{0}]$, there exists $M_{t_{0}}$ such that
\begin{align*}
\n{\lra{\nabla}^{s_{c}}P_{>M_{t_{0}}}u(t_{0})}_{L_{x}^{2}}\leq \frac{1}{2}\ve.
\end{align*}
Therefore, there is a $\delta_{t_{0}}>0$ such that for $t\in (t_{0}-\delta_{t_{0}},t_{0}+\delta_{t_{0}})\cap [0,T_{0}]$,
\begin{align*}
\n{\lra{\nabla}^{s_{c}}P_{>M_{t_{0}}}u(t)}_{L_{x}^{2}}\leq \ve.
\end{align*}
The collection of intervals $\{(t-\delta_{t},t+\delta_{t})\cap [0,T_{0}]\}_{t\in[0,T_{0}]}$ forms an open cover of the compact set $[0,T_{0}]$. Hence, we may extract a finite subcover, which we denote by
\begin{align*}
(t_{1}-\delta_{t_{1}},t_{1}+\delta_{t_{1}})\cap [0,T_{0}],\ldots,(t_{J}-\delta_{t_{J}},t_{J}+\delta_{t_{J}})\cap [0,T_{0}].
\end{align*}
Setting
$$M(\ve)=\max\{M_{t_{1}},\ldots,M_{t_{J}}\},$$
we obtain the desired estimate \eqref{equ:UTFL for NLS}.
\end{proof}

\bibliographystyle{abbrv}
%\bibliographystyle{plain}
%\nocite{*}
\bibliography{references}

\end{document}